\theoremstyle{plain}
\newtheorem{conjecture}{Conjecture}[section]
\newtheorem{theorem}[conjecture]{Theorem}
\newtheorem{lemma}[conjecture]{Lemma}
\newtheorem{proposition}[conjecture]{Proposition}
\newtheorem{corollary}[conjecture]{Corollary}
\theoremstyle{definition}
\newtheorem{definition}[conjecture]{Definition}
\newtheorem{remark}[conjecture]{Remark}
\newtheorem{example}[conjecture]{Example}
\newtheorem{examples}[conjecture]{Examples}
\numberwithin{equation}{section}
\DeclareMathOperator{\qr}{qr}
\DeclareMathOperator{\emb}{emb}
\DeclareMathOperator{\sub}{sub}
\DeclareMathOperator{\cf}{cf}
\DeclareMathOperator{\cl}{cl}
\DeclareMathOperator{\Str}{Str}
\DeclareMathOperator{\ar}{ar}
\DeclareMathOperator{\frvar}{frvar}
\DeclareMathOperator{\disjoint}{disjoint}
\title{On logics extended with embedding-closed quantifiers}
\date{}
\author{J. Haigora \and K. Luosto}
\begin{document}

\maketitle

\begin{abstract}
We study first-order as well as infinitary logics
extended with quantifiers closed upwards under embeddings.
In particular, we show that if a chain of quasi-homogeneous structures is sufficiently long then, in that chain, a given formula of such a logic becomes eventually equivalent to a quantifier-free formula.
We use this fact to produce a number of undefinability results for logics with embedding-closed quantifiers.
In the final section we introduce an Ehrenfeucht-Fra\"iss\'e game that characterizes the $\mathcal{L}_{\infty \omega}(\mathcal{Q}_{\emb})$-equivalence between structures,
where $\mathcal{Q}_{\emb}$ is the class of all embedding-closed quantifiers. In conclusion, we provide an application of this game illustrating its use.
\end{abstract}

\section{Introduction}

In this paper we focus our attention on a certain class of logics whose expressive power is greater than that of first-order logic (denoted here by $\mathcal{L}_{\omega\omega}$), the central logic in mathematics.
While $\mathcal{L}_{\omega\omega}$ has well-developed model theory due to its many convenient properties, it has a downside in that its expressive power is rather limited. 
Many natural mathematical statements, for example "there are infinitely many", cannot be expressed in $\mathcal{L}_{\omega\omega}$.
This motivates study of alternative logics.
 
Mostowski was one of the first to suggest in \cite{Mostowski: 1957} 
the idea of expanding $\mathcal{L}_{\omega\omega}$ with formulas of the form $Q_{\alpha} x \varphi(x)$ which are interpreted so that $\mathfrak{A} \vDash  Q_{\alpha} x \varphi(x)$ if and only if there are at least $\aleph_{\alpha}$ elements $a$ with $\mathfrak{A} \vDash \varphi(a)$.
This idea broadened the notion of quantifier giving rise to many interesting logics defined in a similar way. 
The current definition of generalized quantifier is due to Lindstr\"om \cite{Lindstrom: 1966}. We describe it in more detail in Section \ref{prel}.
 
In short, every generalized quantifier $Q$ corresponds to some property of structures that we denote by $\mathcal{K}_Q$.
Suppose $\mathcal{L}$ is a logic closed under substitution and $P$ is a property not expressible in it. By adding quantifier $Q_P$ to $\mathcal{L}$ we get the smallest extension of $\mathcal{L}$ satisfying certain closure conditions that can express $P$. 
The properties of the new logic $\mathcal{L}(Q_P)$ can differ substantially from those of $\mathcal{L}$ and may thus become an interesting object of study. 
A justification for studying generalized quantifiers comes among others from the fact that any reasonable extension $\mathcal{L}$ of, say, the logic $\mathcal{L}_{\omega\omega}$,
closed under substitutions, is equivalent to the logic $\mathcal{L}_{\omega\omega}(\mathcal{Q})$ where $\mathcal{Q}$ is a class of quantifiers corresponding to some new properties definable in $\mathcal{L}$.

In the present work
we shall concentrate on extensions of logics $\mathcal{L}_{\omega\omega}$, $\mathcal{L}_{\infty \omega}$ and $\mathcal{L}_{\infty \omega}^{\omega}$ (the finite variable logic) with generalized quantifiers $Q$ that satisfy the following restriction: for all structures $\mathfrak{A} \in \Str[\tau_Q]$, if $\mathfrak{A} \in \mathcal{K}_Q$ and $\mathfrak{A}$ is embeddable into $\mathfrak{B}$ then $\mathfrak{B} \in \mathcal{K}_Q$. 
We call such quantifiers \emph{embedding-closed} and denote the class of all embedding-closed quantifiers by $\mathcal{Q}_{\emb}$.
This is an interesting class of quantifiers to study since many well-known quantifiers and properties, like cardinality quantifiers $Q_{\alpha}$ for instance, are closed under embeddings either upwards or downwards which is essentially the same for our purposes. We present more examples of such properties in Section \ref{emb-cl_quant}.
We also note that embedding-closed quantifiers are natural in the sense that if $Q$ is embedding-closed then  a sentence $Q(\overline{x}_{\alpha}\varphi_{\alpha})_{\alpha<\kappa}$ says that formulas $\varphi_{\alpha}$, $\alpha<\kappa$, define a structure that have a substructure belonging to a given class of structures.
Thus, embedding-closed quantifiers seem to be an interesting object of study, and our observations in this paper show that it is possible to develop general theory for this class of quantifiers.

Call a structure $\mathfrak{A}$ \emph{quasi-homogeneous} if every isomorphism between fi\-ni\-te\-ly generated substructures of $\mathfrak{A}$ can be extended to an embedding of $\mathfrak{A}$ into itself.
This weakens the usual notion of homogeneity which deals with automorphisms instead of embeddings. 
The notion of embedding-closed quantifier arises naturally when we observe in Theorem \ref{homog} that in order to guarantee that a quasi-homogeneous structure has quantifier elimination for logic $\mathcal{L}_{\infty\omega}$ extended with a set of quantifiers $\mathcal{Q}$, the quantifiers in $\mathcal{Q}$ must be closed under embeddings. 

In \cite{Dawar: 2010}, Dawar and Gr\"adel showed that $\mathcal{L}_{\infty \omega}^{\omega}$ augmented with finitely many embedding-closed quantifiers of finite width has a $0$-$1$ law meaning that on finite structures such a logic can only express properties that hold in almost all finite structures.
Our aim is to study further limits of the expressive power of logics with embedding-closed quantifiers that are not implied by a $0$-$1$ law.
These include for example indefinability of properties of infinite structures and structures with function symbols.
In this article we provide two methods that make it possible. The first method involves construction of a certain chain of quasi-homogeneous structures. 
The second method is based     
on the Ehrenfeucht-Fra\"iss\'e game that we develop in order to characterize $\mathcal{L}_{\infty \omega}(\mathcal{Q}_{\emb})$-equivalence between structures. 
In the article we apply these methods to produce a number of undefinability results.

The article is structured as follows.
In Section \ref{prel} we introduce preliminary notions.
In Section \ref{emb-cl_quant} we describe basic properties of embedding-closed quantifiers that will be needed later, and give some examples.
Before moving to our own major results, we show that $\mathcal{L}^{\omega}_{\infty \omega}(\mathcal{Q})$, where $\mathcal{Q}$ is a finite set of embedding-closed quantifiers of finite width, has a $0$-$1$ law. We do this in Section \ref{law}. The proof concerning $0$-$1$ law was originally given in \cite{Dawar: 2010}.
In Section \ref{chain_section} we introduce the notion of quasi-homogeneity and show that if a chain of quasi-homogeneous structures is sufficiently long then the truth value of a given sentence of a logic with embedding-closed quantifiers is eventually preserved.
This in turn allows us to obtain some undefinability results. 
The section has two subsections, one of which is devoted to the undefinability of properties of finite structures and another deals with infinite structures. 
In Section \ref{game_section} we describe the \emph{embedding game} that characterizes $\mathcal{L}_{\infty\omega}(\mathcal{Q}_{\emb})$-equivalence of a given pair of structures. We close the section with an application of the game that allows us to show that
for each $n<\omega$ there is a first-order sentence of quantifier rank $n$ that is not expressible by any sentence of $\mathcal{L}_{\infty\omega}(\mathcal{Q}_{\emb})$ of quantifier rank $<n$.

\section{Preliminaries}\label{prel}
The notation for the basic concepts of abstract logics that we use in this paper is taken from the book \cite{Ebbinghaus: 1985}, and we assume that the reader is familiar with them.
A \emph{vocabulary} $\tau$ consists of \emph{relation}, \emph{function} and \emph{constant symbols},
\[
	\tau = \{R,\dots,f,\dots,c,\dots\}.
\]
We denote by $\ar(R)$ and $\ar(f)$ the \emph{arities} of relation and function symbols, respectively.
A \emph{$\tau$-structure} $\mathfrak{A}$ is a sequence
\[
	\mathfrak{A} = (A,R^{\mathfrak{A}},\dots,f^{\mathfrak{A}},\dots,c^{\mathfrak{A}}),
\]
where $A$ is a set that we call the \emph{universe} of $\mathfrak{A}$,
and $R^{\mathfrak{A}}\subseteq A^{\ar(R)},\dots$ are interpretations of symbols of $\tau$.
We denote the class of all $\tau$-structures by $\Str[\tau]$.
We denote the number of free variables of $\varphi$ by $\frvar(\varphi)$.
If $\mathfrak{A}\in\Str[\tau]$ and $\varphi \in \mathcal{L}[\tau]$ then we write
\[
	\varphi^{\mathfrak{A}} = \{\overline{a} \in A^{\frvar(\varphi)} : \mathfrak{A},\overline{a} \vDash \varphi  \}.
\]
A \emph{literal} is an atomic formula or a negation of an atomic formula.
An \emph{atomic $n$-type} of $\tau$ is a set $\Phi$ of literals of $\tau$ in variables $x_1,\dots,x_n$ such that there is a $\tau$-structure $\mathfrak{A}$ and $n$-tuple $\overline{a}$ of elements in $A$ with
\[
	\Phi = \{\varphi \colon \mathfrak{A},\overline{a} \vDash \varphi \text{ and } \varphi \text{ is a literal of }\tau \}.
\]
If we work with a logic $\mathcal{L}$ in which for each atomic type $\Phi$ there is a formula $t$ equivalent to $\bigwedge\Phi$ then  $t$ is also called an atomic type.

\begin{lemma}\label{quant}
Let $\vartheta$ be a quantifier-free $\tau$-formula in $n$ free variables.
Then there exists a set $T$ of atomic $n$-types of $\tau$ such that
\[
	\vDash \vartheta \leftrightarrow \bigvee_{\Phi\in T} \bigwedge_{\varphi\in\Phi} \varphi.
\]
\end{lemma}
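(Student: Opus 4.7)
The plan is to express $\vartheta$ as the disjunction of the atomic $n$-types it is compatible with. Let $T$ be the set of all atomic $n$-types $\Phi$ of $\tau$ such that some structure $\mathfrak{A}$ and $n$-tuple $\overline{a}$ realize $\Phi$ and satisfy $\mathfrak{A}, \overline{a} \vDash \vartheta$. I will verify both directions of the equivalence against this choice of $T$.

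The easy direction is left-to-right. Suppose $\mathfrak{A}, \overline{a} \vDash \vartheta$, and let $\Phi$ be the atomic $n$-type realized by $\overline{a}$ in $\mathfrak{A}$, that is, the collection of all literals of $\tau$ in $x_1,\dots,x_n$ satisfied by $\overline{a}$. Then $\Phi \in T$ by definition, and trivially $\mathfrak{A}, \overline{a} \vDash \bigwedge \Phi$, so the right-hand side holds.

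For the other direction, the core observation to establish is that if two tuples $(\mathfrak{A},\overline{a})$ and $(\mathfrak{B},\overline{b})$ realize the same atomic $n$-type $\Phi$, then they agree on all quantifier-free $\tau$-formulas in $x_1,\dots,x_n$. This is proved by a straightforward induction on the quantifier-free formula: atomic formulas and their negations are exactly the literals, whose truth is fixed by $\Phi$, and the Boolean connectives preserve this agreement. Granted this, suppose $\mathfrak{A}, \overline{a} \vDash \bigwedge \Phi$ for some $\Phi \in T$. Since $\Phi$ is a complete atomic type (containing, for each literal, either it or its negation), $\Phi$ is exactly the atomic $n$-type of $\overline{a}$ in $\mathfrak{A}$. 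By definition of $T$, there is some $(\mathfrak{B},\overline{b})$ realizing $\Phi$ with $\mathfrak{B}, \overline{b} \vDash \vartheta$, so the preservation principle gives $\mathfrak{A}, \overline{a} \vDash \vartheta$.

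I do not expect any real obstacle; the only point to be careful about is that, for vocabularies with function symbols, atomic $n$-types may be infinite (they range over literals involving all terms built from $x_1,\dots,x_n$ and constants), so $\bigwedge_{\varphi\in\Phi}\varphi$ must be interpreted in a logic admitting such conjunctions (as already remarked after the definition of atomic types). For the semantic equivalence $\vDash \vartheta \leftrightarrow \bigvee_{\Phi\in T}\bigwedge_{\varphi\in\Phi}\varphi$, however, this is not an issue: each side is defined pointwise on tuples, and the equivalence is exactly the content of the argument above.
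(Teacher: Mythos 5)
Your proof is correct: the paper states this lemma without proof (it is the standard ``disjunctive normal form over complete atomic types'' fact), and your argument --- taking $T$ to be the types realized by some tuple satisfying $\vartheta$, and using the observation that tuples of the same atomic type agree on all quantifier-free formulas --- is exactly the intended standard argument. Your remark about infinite atomic types in the presence of function symbols is apt and matches the paper's own caveat following the definition of atomic types.
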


In this article we will consider the following logics.
We assume that the reader is familiar with the \emph{first-order logic} $\mathcal{L}_{\omega\omega}$ and the related notions.
Let $\kappa$ be a cardinal. The logic $\mathcal{L}_{\kappa\omega}$ is allowed to have conjunctions and disjunctions over sets of formulas of cardinality $< \kappa$. 
The logic $\mathcal{L}_{\infty\omega}$ can have conjunctions and disjunctions over arbitrary sets of formulas. 
Formulas of the logic $\mathcal{L}_{\infty\omega}^{\omega}$ (\emph{finite variable logic}) are exactly those of $\mathcal{L}_{\infty \omega}$ that use at most finite number of variables.

Suppose $\mathcal{L}$ is a logic
and $\tau,\sigma$ are vocabularies where $\sigma$ has only relation symbols. An \emph{$\mathcal{L}$-interpretation} of $\sigma$ in $\tau$ is a sequence $(\Psi,(\psi_R)_{R\in\sigma})$, where each $\psi_R$ is an $\mathcal{L}[\tau]$-formula that has exactly $\ar(R)$ free variables,
and $\Psi$ is a function $\Str[\tau] \rightarrow \Str[\sigma]$ such that
for each $\mathfrak{A}\in\Str[\tau]$, the universe of $\Psi(\mathfrak{A})$ is $A$ and
\begin{equation*}
	R^{\Psi(\mathfrak{A})} = \{\overline{a} \in A^{\ar(R)} \colon \mathfrak{A}\vDash\psi_R(\overline{a}) \}
\end{equation*}
for all $R\in\sigma$.

Let $C \subseteq \Str[\sigma]$ be a class of structures closed under isomorphism.
The logic $\mathcal{L}_{\kappa\omega}(Q_C)$ is the smallest extension of $\mathcal{L}_{\kappa\omega}$ closed under negation, conjunctions and disjunctions of cardinality $<\kappa$ and application of the existential quantifier $\exists$ such that  for all
$\mathcal{L}_{\kappa\omega}(Q_C)$-interpretations $(\Psi,(\psi_R)_{R\in \sigma})$ there is a $\mathcal{L}_{\kappa\omega}(Q_C)[\tau]$-formula $\chi$ such that
\begin{equation}\label{clever}
	\mathfrak{A} \vDash \chi \Leftrightarrow \Psi(\mathfrak{A})\in C
\end{equation}
for all $\mathfrak{A} \in \Str[\tau]$.
In a similar way we define the logics $\mathcal{L}_{\infty \omega}(Q_C)$ and $\mathcal{L}^{\omega}_{\infty\omega}(Q_C)$.
We say that $Q_C$ is the \emph{generalized quantifier} corresponding to the class $C$, and write $Q_C(\overline{x}_R\psi_R)_{R\in\sigma}$ for the formula $\chi$ in \eqref{clever}.
The class $C$ is called the \emph{defining class} of $Q_C$, and $\sigma$ the \emph{vocabulary} of $Q$.
Sometimes we denote a quantifier just by the symbol $Q$, its defining class by $K_Q$, and the vocabulary of $Q$ by $\tau_Q$.
We say that $|\tau_Q|$ is the \emph{width} of $Q$.
In a similar way,
we can also define the extension of a given logic $\mathcal{L}$ with a \emph{class} $\mathcal{Q}$ of quantifiers, instead of just one, which we denote by $\mathcal{L}(\mathcal{Q})$.

\clearpage

\begin{lemma}\label{interpr}
Let $\mathcal{Q}$ be a possibly empty class of quantifiers, $\kappa$ a cardinal and $\mathcal{L} = \mathcal{L}_{\kappa \omega}(\mathcal{Q})$ or $\mathcal{L} = \mathcal{L}^{\omega}_{\infty\omega}(\mathcal{Q})$.
Suppose $(\Psi,(\psi_R)_{R\in \sigma})$ is an $\mathcal{L}$-interpretation of $\sigma$ in $\tau$.
Then for each $\mathcal{L}[\sigma]$-formula $\varphi$ there is an $\mathcal{L}[\tau]$-formula $\varphi^*$ such that
\[
	\mathfrak{A},\overline{a} \vDash \varphi* \Leftrightarrow \Psi(\mathfrak{A}),\overline{a} \vDash \varphi 
\]
for all $\mathfrak{A} \in \Str[\tau]$ and tuples $\overline{a}$ of elements in $A$.
\end{lemma}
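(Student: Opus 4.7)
The plan is to prove the lemma by structural induction on $\varphi \in \mathcal{L}[\sigma]$. The inductive statement must carry along parameters for free variables: for every valuation $\overline{b}$ of the free variables of $\varphi$ in $A$, the translation $\varphi^* \in \mathcal{L}[\tau]$ is required to satisfy $\mathfrak{A}, \overline{b} \vDash \varphi^* \Leftrightarrow \Psi(\mathfrak{A}), \overline{b} \vDash \varphi$.

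The base cases and the first-order inductive steps are routine. For $\varphi = R(\overline{x})$ with $R \in \sigma$ take $\varphi^* = \psi_R(\overline{x})$, and for an equality atom set $\varphi^* = \varphi$; both equivalences follow from the defining clause of the interpretation. For negation, conjunctions and disjunctions (of cardinality $< \kappa$ where relevant), and for the existential quantifier, the translation simply commutes with the connective: $(\neg \varphi)^* := \neg \varphi^*$, $(\bigwedge_i \varphi_i)^* := \bigwedge_i \varphi_i^*$, and $(\exists x\, \varphi)^* := \exists x\, \varphi^*$, each step unfolding immediately from the inductive hypothesis.

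The crucial case is the generalized quantifier step. Suppose $\varphi = Q(\overline{x}_R \chi_R(\overline{x}_R, \overline{y}))_{R \in \sigma'}$ for some $Q \in \mathcal{Q}$ of vocabulary $\sigma'$, with $\overline{y}$ the remaining free variables. By induction each $\chi_R$ has a translation $\chi_R^* \in \mathcal{L}[\tau]$ with the same free variables. To fit the framework of clause \eqref{clever}, which is phrased for parameter-free interpretations, treat $\overline{y}$ as fresh constants $\overline{c}$: the formulas $\chi_R^*[\overline{c}/\overline{y}]$, together with the natural map sending a $(\tau \cup \{\overline{c}\})$-structure $\mathfrak{A}'$ to the $\sigma'$-structure on $A$ with relations $\{\overline{a} : \mathfrak{A}' \vDash \chi_R^*(\overline{a}, \overline{c})\}$, form an $\mathcal{L}$-interpretation of $\sigma'$ in $\tau \cup \{\overline{c}\}$. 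Clause \eqref{clever} then delivers a sentence in this extended vocabulary equivalent to membership of this structure in $K_Q$; substituting $\overline{y}$ back for $\overline{c}$ produces $\varphi^* \in \mathcal{L}[\tau]$, and combining the inductive equivalences for the $\chi_R$ with the semantics of $Q$ yields the desired statement.

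The main technical point is precisely this last case: since the notion of $\mathcal{L}$-interpretation in the paper admits no parameters, one must verify that the constant-for-variable device produces a legitimate interpretation and that substituting variables back for the auxiliary constants keeps the resulting formula inside $\mathcal{L}[\tau]$—which it does, because $\mathcal{L}(\mathcal{Q})$ is generated by syntactic operations that commute with such renamings. A secondary concern in the finite-variable setting $\mathcal{L}^{\omega}_{\infty\omega}(\mathcal{Q})$ is that $\varphi^*$ should itself use only finitely many variables; this is automatic, since $\varphi$ uses finitely many, only finitely many distinct $\psi_R$ occur in the translation, and their bound variables can be renamed to avoid the free variables currently in play.
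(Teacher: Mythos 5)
Your proof is correct and follows essentially the same route as the paper, whose entire proof is the one-line instruction to replace each atomic subformula $R(\overline{x})$ by $\psi_R(\overline{x})$; your structural induction simply makes explicit what that substitution means at each formula-building step. The extra care you take with parameters in the generalized-quantifier case (the fresh-constants device) and with variable counts in $\mathcal{L}^{\omega}_{\infty\omega}$ addresses details the paper leaves implicit, and does so correctly.
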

\begin{proof}
Replace all atomic subformulas $R(\overline{x})$ of $\varphi$ with formulas $\psi_R(\overline{x})$ to get $\varphi^*$. 
\end{proof}

\begin{lemma}\label{possibly_empty}
Let $\mathcal{Q}_0$ be a possibly empty class of quantifiers, $\kappa$ a cardinal and $\mathcal{L} = \mathcal{L}_{\kappa \omega}(\mathcal{Q}_0)$ or $\mathcal{L} = \mathcal{L}^{\omega}_{\infty\omega}(\mathcal{Q}_0)$.
Suppose $\mathcal{Q}_1$ is a class of quantifiers such that for every $Q\in\mathcal{Q}_1$ its defining class $\mathcal{K}_Q$ is definable in $\mathcal{L}$.
Then $\mathcal{L}(\mathcal{Q}_1) \equiv \mathcal{L}$.
\end{lemma}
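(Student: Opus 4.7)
The plan is to proceed by a straightforward induction on the construction of $\mathcal{L}(\mathcal{Q}_1)[\tau]$-formulas $\varphi$, producing for each such $\varphi$ a logically equivalent formula $\varphi^{\circ}\in\mathcal{L}[\tau]$. Atomic formulas already lie in $\mathcal{L}$, and the inductive cases for negation, conjunctions and disjunctions of cardinality $<\kappa$, the existential quantifier, and any quantifiers from $\mathcal{Q}_0$ all use only closure properties that $\mathcal{L}$ already possesses; in each of these cases $\varphi^{\circ}$ is obtained by applying the corresponding operation to the formulas produced by the induction hypothesis.

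The sole nontrivial case is when $\varphi = Q(\overline{x}_R\psi_R)_{R\in\sigma}$ for some $Q\in\mathcal{Q}_1$. Applying the induction hypothesis to each $\psi_R$ yields an equivalent $\widetilde{\psi}_R\in\mathcal{L}[\tau]$, and these formulas determine an $\mathcal{L}$-interpretation $(\Psi,(\widetilde{\psi}_R)_{R\in\sigma})$ of $\sigma$ in $\tau$. By hypothesis there is an $\mathcal{L}[\sigma]$-sentence $\theta_Q$ that defines $\mathcal{K}_Q$, and Lemma~\ref{interpr} then supplies an $\mathcal{L}[\tau]$-sentence $\theta_Q^{*}$ with
\[
  \mathfrak{A}\vDash\theta_Q^{*}\iff\Psi(\mathfrak{A})\vDash\theta_Q\iff\Psi(\mathfrak{A})\in\mathcal{K}_Q\iff\mathfrak{A}\vDash\varphi,
\]
so setting $\varphi^{\circ}:=\theta_Q^{*}$ completes the induction.

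The main obstacle I anticipate is not conceptual but bookkeeping around parameters: in general the formulas $\psi_R$ carry free parameter variables $\overline{y}$ in addition to the bound $\overline{x}_R$, so the interpretation must be read parametrically, and Lemma~\ref{interpr} must be applied uniformly in $\overline{y}$ in order to obtain $\theta_Q^{*}$ as a formula with the correct free variables. In the finite-variable case $\mathcal{L}^{\omega}_{\infty\omega}(\mathcal{Q}_0)$ there is the additional subtlety that the syntactic substitution performed inside Lemma~\ref{interpr} must not inflate the number of variables past the allowed bound; this is already accommodated by the statement of Lemma~\ref{interpr}, and so one simply invokes it rather than redoing the renaming argument here.
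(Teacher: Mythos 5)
Your proposal is correct and follows essentially the same route as the paper: the paper's proof likewise handles only the key case $Q(\overline{x}_R\varphi_R)_{R\in\tau_Q}$ with $Q\in\mathcal{Q}_1$ by substituting the formulas $\varphi_R$ for the atomic subformulas of the defining sentence $\psi$ of $\mathcal{K}_Q$, which is exactly the substitution underlying Lemma~\ref{interpr}, with the surrounding induction left implicit. Your version merely makes the induction and the parameter bookkeeping explicit.
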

\begin{proof}
Let $\tau$ be a vocabulary and consider a formula $Q(\overline{x}_R\varphi_R)_{R\in\tau_Q}$ with $Q \in \mathcal{Q}_1$ and $\varphi_R \in \mathcal{L}[\tau]$ for all $R\in\tau_Q$.
Let $\psi \in \mathcal{L}[\tau_Q]$ be the sentence defining $\mathcal{K}_Q$.
Then 
\[
	\vDash Q(x_R\varphi_R)_{R\in\tau_Q} \leftrightarrow \chi
\]
where $\chi$ is the formula got from $\psi$ by substituting every atomic subformula $R(\overline{x})$ of $\psi$ with $\varphi_R(\overline{x})$.
\end{proof}

\section{Embedding-closed quantifiers}\label{emb-cl_quant}

\begin{definition}
Let $\mathfrak{A}$ and $\mathfrak{B}$ be structures of the same vocabulary $\tau$. An injection $f \colon A \rightarrow B$ is an \emph{embedding of $\mathfrak{A}$ into $\mathfrak{B}$} if
\begin{enumerate}
\item $f(c^{\mathfrak{A}}) = c^{\mathfrak{B}}$ for all constant symbols $c \in \tau$,
\item $\overline{a} \in R^{\mathfrak{A}} \Leftrightarrow f\overline{a} \in R^{\mathfrak{B}}$ for all relation symbols $R \in \tau$ and tuples $\overline{a}$ in $A$,
\item $fF^{\mathfrak{A}}(\overline{a}) = F^{\mathfrak{B}}(f\overline{a})$ for all function symbols $F \in \tau$ and tuples $\overline{a}$ in $A$.
\end{enumerate}
The notation $\mathfrak{A} \leq \mathfrak{B}$ means that $\mathfrak{A}$ is embeddable into $\mathfrak{B}$.
We say that a sequence $(\mathfrak{A}_{\alpha})_{\alpha < \gamma}$ of $\tau$-structures is a \emph{chain} if $\mathfrak{A}_{\alpha} \leq \mathfrak{B}_{\beta}$ whenever $\alpha < \beta$.
We say that a class $C$ of $\tau$-structures is an \emph{antichain} if $\mathfrak{A} < \mathfrak{B}$ is never true for any structures $\mathfrak{A}, \mathfrak{B} \in C$.

A class $K$ of $\tau$-structures is \emph{embedding-closed} if $\mathfrak{A} \in K$ and $\mathfrak{A} \leq \mathfrak{B}$ imply $\mathfrak{B} \in K$. We say that a quantifier $Q$ is embedding-closed if its defining class is embedding-closed.
We denote by $\mathcal{Q}_{\emb}$ the class of all embedding-closed quantifiers.

\end{definition}

\begin{lemma}\label{preservation}
Let $\tau$ be a vocabulary,
$(\varphi_{\alpha} )_{\alpha < \kappa}$ quantifier-free $\tau$-formulas and $Q$ an embedding-closed quantifier of width $\kappa$. 
The formula $Q(\overline{x}_{\alpha} \varphi_{\alpha})_{\alpha < \kappa}$ is preserved by embeddings.
\end{lemma}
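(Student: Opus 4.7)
The plan is to reduce preservation of the compound formula to embedding-closedness of $\mathcal{K}_Q$ via the standard interpretation that underlies the semantics of generalized quantifiers. For a $\tau$-structure $\mathfrak{M}$, let $\Psi(\mathfrak{M})$ be the $\tau_Q$-structure with universe $M$ in which the symbol $R_\alpha\in\tau_Q$ (of arity $|\overline{x}_\alpha|$) is interpreted as $\varphi_\alpha^{\mathfrak{M}}$. By the semantic clause for generalized quantifiers, $\mathfrak{M}\vDash Q(\overline{x}_\alpha\varphi_\alpha)_{\alpha<\kappa}$ if and only if $\Psi(\mathfrak{M})\in\mathcal{K}_Q$.

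Next I would take an embedding $f\colon\mathfrak{A}\to\mathfrak{B}$ and assume $\mathfrak{A}\vDash Q(\overline{x}_\alpha\varphi_\alpha)_{\alpha<\kappa}$, so that $\Psi(\mathfrak{A})\in\mathcal{K}_Q$. Since $\mathcal{K}_Q$ is embedding-closed, it suffices to show that the same map $f$, viewed now as a function between the universes of the interpretations, witnesses $\Psi(\mathfrak{A})\leq\Psi(\mathfrak{B})$. Because $\tau_Q$ is purely relational, this reduces to verifying that $f$ is injective (inherited from the original embedding) and that, for every $\alpha<\kappa$ and every tuple $\overline{a}$ in $A$,
\[
\overline{a}\in R_\alpha^{\Psi(\mathfrak{A})}\;\Longleftrightarrow\;f\overline{a}\in R_\alpha^{\Psi(\mathfrak{B})},
\]
which unfolds to $\mathfrak{A}\vDash\varphi_\alpha(\overline{a})$ iff $\mathfrak{B}\vDash\varphi_\alpha(f\overline{a})$.

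The key step is this equivalence, and it is the standard fact that quantifier-free formulas are both preserved and reflected by embeddings. I would obtain it directly from Lemma~\ref{quant}: each $\varphi_\alpha$ is equivalent to a disjunction of conjunctions of literals, and literals are preserved and reflected by embeddings essentially by the definition of embedding (the relation, function and constant clauses are bi-conditionals, so their negations transfer as well). Once this is in hand, $f$ is an embedding $\Psi(\mathfrak{A})\hookrightarrow\Psi(\mathfrak{B})$, whence $\Psi(\mathfrak{B})\in\mathcal{K}_Q$ and $\mathfrak{B}\vDash Q(\overline{x}_\alpha\varphi_\alpha)_{\alpha<\kappa}$. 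I do not anticipate a genuine obstacle; the content of the lemma is essentially the observation that the interpretation $\Psi$ is functorial with respect to embeddings when the defining formulas are quantifier-free.
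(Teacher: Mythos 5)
Your proof is correct and follows essentially the same route as the paper: both arguments observe that the embedding $f$ is also an embedding between the induced $\tau_Q$-structures $(A,(\varphi_\alpha^{\mathfrak{A}})_\alpha)$ and $(B,(\varphi_\alpha^{\mathfrak{B}})_\alpha)$ because quantifier-free formulas are preserved and reflected by embeddings, and then invoke the embedding-closedness of $\mathcal{K}_Q$. The only cosmetic difference is that the paper carries the parameter tuple $\overline{a}$ explicitly (the formula may have free variables) while you suppress it, and you spell out the reduction to literals via Lemma~\ref{quant} which the paper leaves implicit.
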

\begin{proof}
Let $\mathfrak{A}$ and $\mathfrak{B}$ be $\tau$-structures.
Suppose that $(\mathfrak{A},\overline{a}) \vDash Q(\overline{x}_{\alpha} \varphi_{\alpha})_{\alpha < \kappa}$ and $f \colon A \rightarrow B$ is an embedding.
Then $f$ is also an embedding of $(A,(\varphi_{\alpha}^{\mathfrak{A},\overline{a}})_{\alpha < \kappa})$ into $(B,(\varphi_{\alpha}^{\mathfrak{B},f\overline{a}})_{\alpha < \kappa})$ since quantifier-free formulas are preserved by embeddings, so $(\mathfrak{B},f\overline{a}) \vDash Q(\overline{x}_{\alpha} \varphi_{\alpha})_{\alpha < \kappa}$ since $Q$ is embedding-closed.
\end{proof}

Note that instead of requiring the quantifiers to be closed upwards under embeddings, we could use the downwards closure to get an equivalent class of quantifiers.   
Call a quantifier $Q$ \emph{substructure-closed} if from $\mathfrak{A} \in K_Q$ and $\mathfrak{B} \leq \mathfrak{A}$ follows $\mathfrak{B} \in K_Q$, and denote the class of all substructure-closed quantifiers by $\mathcal{Q}_{\sub}$. The expressive power of $\mathcal{Q}_{\sub}$ is clearly the same as that of $\mathcal{Q}_{\emb}$ since the complement $Q^*$ of an embedding-closed quantifier $Q$ is substructure-closed, so
\[
	\mathfrak{A} \vDash Q(\overline{x}_{\alpha}\varphi_{\alpha})_{\alpha < \kappa} \Leftrightarrow  \mathfrak{A} \vDash \neg Q^*(\overline{x}_{\alpha}\varphi_{\alpha})_{\alpha < \kappa}.
\]

Next we present some examples of well-known properties and quantifiers that are either embedding-closed or are definable in the logic $\mathcal{L}_{\infty \omega}(\mathcal{Q}_{\emb})$.
We use notation $Q^{\cl}$ to denote the closure of the quantifier $Q$ under embeddings. In other words $Q^{\cl}$ is the smallest embedding-closed quantifier containig $Q$.

\begin{examples}
\begin{enumerate}
\item Let $\tau = \{U\}$ be a vocabulary consisting of a single unary relation symbol. The existential quantifier $\exists$ corresponds to the class of structures
$\{\mathfrak{A} \in \Str[\tau] : U^{\mathfrak{A}} \ne \emptyset \}$, so it is embedding-closed.

\item Let $\tau$ be the same as above and $\alpha$ an ordinal. The defining class of the \emph{cardinality quantifier} $Q_{\alpha}$ is $\{\mathfrak{A} \in \Str[\tau] : |U^{\mathfrak{A}}| \geq \aleph_{\alpha} \}$
which is clearly embedding-closed.

\item For each $n < \omega$, let $\sigma_n = \{M_n\}$ be a vocabulary consisting of a single $n$-ary relation symbol.
The \emph{Magidor-Malitz} quantifier $Q_{\alpha}^n$, whose defining class is
\[
	\{ \mathfrak{A} \in \Str[\sigma_n] : \text{ there is } C \subseteq A \text{ with } |C| \geq \aleph_{\alpha} \text{ and } C^n \subseteq M_n^{\mathfrak{A}}\},
\]
is embedding-closed.

\item The well-ordering quantifier $Q^W$, whose defining class is the class of all well-orders,
is substructure-closed, so by our earlier remark it can be defined with embedding-closed quantifiers.

\item The equivalence quantifier $Q^E_{\alpha}$, whose defining class consists of all
structures $(A,E)$ where $E$ is an equivalence relation on $A$ with at least 
$\aleph_{\alpha}$ equivalence classes, is not embedding-closed.
Nonetheless, it can be defined by the sentence
\[
	(Q^E_{\alpha})^{\cl}xyE(x,y) \land \text{"}E \text{ is an equivalence relation"}
\]
in $\mathcal{L}_{\omega\omega}(Q)$ where $Q = (Q^E_{\alpha})^{\cl}$ is embedding-closed.

\item Many graph properties are embedding- or substructure-closed. Examples include $k$-colorability, being a forest, completeness, planarity, having a cycle, and many others.

\item 
This is an example of a graph property that is not embedding- or substructure-closed but is however definable in $\mathcal{L}_{\omega\omega}(Q)$ for an em\-bed\-ding-closed quantifier $Q$.
The property in question is connectedness of a graph. 
Let $\sigma = \{R,B,E\}$ be the vocabulary of
coloured graphs where symbols $R$ and $B$ stand for colors \emph{red} and \emph{blue}.
Let $C \subseteq \Str[\sigma]$ consist of all the graphs $G$ in which for every blue-red pair $(x,y)$ of vertices there is a path between $x$ and $y$ and $R^G$, $B^G$ are not empty. 
Put $D = C^{\cl}$.
Then for all graphs $G$, 
\[
	G \vDash \forall xy Q_Dstuv(s=x,t=y,E(u,v))
\]
if and only if $G$ is connected.
\end{enumerate}
\end{examples}

As we will show below, there exist properties not definable in $\mathcal{L}_{\infty \omega}(\mathcal{Q}_{\emb})$. 
These include among others equicardinality of sets (Example \ref{Hartig}), and completeness and cofinality of an ordering (Examples \ref{completeness} and \ref{cofinality}).

\subsection{Homomorphism-closed quantifiers}

In addition to being closed under embeddings, we can think of various other interesting closure conditions for quantifiers, like being closed under homomorphisms for instance.
The purpose of this subsection is to show that any embedding-closed property can be defined in a logic with homomorphism-closed quantifiers only. Thus, 
requiring quantifiers to be closed under homomorphisms is not essentially more restrictive than the requirement that they are closed under embeddings.
We denote by $\mathcal{Q}_{\hom}$ the class of all homomorphism-closed quantifiers.

\begin{theorem}
Let $\mathcal{L} = \mathcal{L}_{\kappa\omega}$ for some cardinal $\kappa$ or $\mathcal{L} = \mathcal{L}_{\infty\omega}^{\omega}$.
Then $\mathcal{L}(\mathcal{Q}_{\emb}) \equiv \mathcal{L}(\mathcal{Q}_{\hom})$.
\end{theorem}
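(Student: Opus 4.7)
The plan is to prove both inclusions separately. The inclusion $\mathcal{L}(\mathcal{Q}_{\hom}) \le \mathcal{L}(\mathcal{Q}_{\emb})$ is essentially immediate: comparing the definitions, every embedding is a homomorphism, so any class of structures closed under homomorphisms is automatically closed under embeddings. Thus $\mathcal{Q}_{\hom} \subseteq \mathcal{Q}_{\emb}$ and the inclusion of logics follows.

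For the nontrivial inclusion $\mathcal{L}(\mathcal{Q}_{\emb}) \le \mathcal{L}(\mathcal{Q}_{\hom})$, I would invoke Lemma \ref{possibly_empty} by showing that for every $Q \in \mathcal{Q}_{\emb}$ the defining class $\mathcal{K}_Q$ is already definable in $\mathcal{L}(\mathcal{Q}_{\hom})$. The idea is to expand the (relational) vocabulary $\tau_Q$ with auxiliary symbols that explicitly record negated atomic information and inequality, so that homomorphisms on the expansion behave like embeddings on the original vocabulary. Concretely, let $\tau_Q' = \tau_Q \cup \{R^{\neg} : R \in \tau_Q\} \cup \{D\}$ with $\ar(R^{\neg}) = \ar(R)$ and $D$ binary, and for $\mathfrak{A} \in \Str[\tau_Q]$ let $\mathfrak{A}'$ be the \emph{proper} expansion in which $R^{\neg, \mathfrak{A}'}$ is the complement of $R^{\mathfrak{A}}$ and $D^{\mathfrak{A}'}$ is the inequality relation on $A$. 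Set
$$\mathcal{K}_Q^{\hom} = \{\mathfrak{C} \in \Str[\tau_Q'] : \text{there exist } \mathfrak{A} \in \mathcal{K}_Q \text{ and a homomorphism } \mathfrak{A}' \to \mathfrak{C}\},$$
which is closed under homomorphisms by composition and hence defines a quantifier $Q^{\hom} \in \mathcal{Q}_{\hom}$.

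The technical heart of the argument is the equivalence $\mathfrak{A} \in \mathcal{K}_Q \iff \mathfrak{A}' \in \mathcal{K}_Q^{\hom}$. The forward direction uses the identity on $\mathfrak{A}'$. For the reverse direction, given a homomorphism $f : \mathfrak{B}' \to \mathfrak{A}'$ with $\mathfrak{B} \in \mathcal{K}_Q$, I would verify that $f$ is already an embedding of $\tau_Q$-reducts $\mathfrak{B} \to \mathfrak{A}$: the homomorphism condition on $R^{\neg}$ forces $f$ to reflect atomic $R$-facts (crucially because $R^{\neg, \mathfrak{A}'}$ is \emph{exactly} the complement of $R^{\mathfrak{A}}$ in the proper expansion), while the condition on $D$ forces injectivity. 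Embedding-closure of $\mathcal{K}_Q$ then yields $\mathfrak{A} \in \mathcal{K}_Q$. Applying $Q^{\hom}$ through the interpretation $R \mapsto R(\overline{x}_R)$, $R^{\neg} \mapsto \neg R(\overline{x}_R)$, $D \mapsto (x \neq y)$, which is available uniformly in both $\mathcal{L}_{\kappa\omega}(\mathcal{Q}_{\hom})$ and $\mathcal{L}_{\infty\omega}^{\omega}(\mathcal{Q}_{\hom})$, then produces a sentence defining $\mathcal{K}_Q$, and Lemma \ref{possibly_empty} closes the argument.

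The main obstacle is the asymmetry that makes the construction go through: $\mathcal{K}_Q^{\hom}$ will generally contain many ``improper'' $\tau_Q'$-structures, where $R$ and $R^{\neg}$ overlap or $D$ is not full inequality, and these do not correspond to anything sensible in $\tau_Q$. The argument has to avoid caring about those, which it does because it only needs the single implication ``$\mathfrak{A}' \in \mathcal{K}_Q^{\hom} \Rightarrow \mathfrak{A} \in \mathcal{K}_Q$'' for the specific proper expansion $\mathfrak{A}'$, and properness of the \emph{target} $\mathfrak{A}'$ is enough to ensure that any homomorphism into it reflects the full atomic diagram of its domain. This one-sided properness is precisely what allows homomorphism-closure on $\tau_Q'$ to simulate embedding-closure on $\tau_Q$.
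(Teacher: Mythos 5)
Your proposal is correct and is essentially the paper's own argument: the paper likewise expands $\tau_Q$ with complement relations and an inequality relation, takes the homomorphism closure of the image of $\mathcal{K}_Q$ under the proper-expansion map, observes that a homomorphism into a proper expansion is an embedding of the reducts, and closes via Lemma \ref{possibly_empty} together with the observation that every homomorphism-closed quantifier is embedding-closed.
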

\begin{proof}
Let $\tau$ be a relational vocabulary and $K$ an embedding-closed class of $\tau$-structures.
Let $\tau' = \tau \cup \{R_* : R \in \tau \} \cup \{N\}$ with $\ar(R_*)= \ar(R)$ for all $R \in\tau$ and $\ar(N) = 2$.
Let $F : \Str[\tau] \rightarrow \Str[\tau']$ be the function such that for each $\mathfrak{A} \in \Str[\tau]$
the universe of $F(\mathfrak{A})$ is $A$, $R^{F(\mathfrak{A})} = R^{\mathfrak{A}}$ and $R_*^{F(\mathfrak{A})} = (\neg R)^{\mathfrak{A}}$ for all $R \in \tau$, and $N^{F(\mathfrak{A})} = (x\ne y)^{\mathfrak{A}}$.  
Now let
\[
	K' = \{\mathfrak{A}\in\tau' : \text{ there is } \mathfrak{B} \in K \text{ and a homomorphism }f:F(\mathfrak{B})\rightarrow \mathfrak{A} \}.
\]
Then $K'$ is homomorphism-closed since the composition of two homomorphisms is itself a homomorphism. 
Let $\psi$ be the following sentence of $\mathcal{L}(\mathcal{Q}_{\hom})$: 
\[
	\psi := Q_{K'}((\overline{x}_RR)_{R\in\tau},(\overline{y}_R\neg R)_{R\in\tau},uv(u\ne v)).
\] 
We claim that $\psi$ defines the class $K$. To show this, suppose first that $\mathfrak{A} \in \Str[\tau]$ is such that $\mathfrak{A}\vDash \psi$.
Then by the definition of $K'$ there is $\mathfrak{B} \in K$ and a homomorphism 
\[
	f : F(\mathfrak{B}) \rightarrow (A,(R^{\mathfrak{A}})_{R\in\tau},(\neg R^{\mathfrak{A}})_{R\in\tau},(x\ne y)^{\mathfrak{A}})
\]
which is in fact an embedding of $\mathfrak{B}$ into $\mathfrak{A}$ so,
since $K$ is embedding-closed, we have $\mathfrak{A} \in K$.
For the other direction, if $\mathfrak{A}\in K$ then $F(\mathfrak{A}) \in K'$ so
\[
	(A,(R^{\mathfrak{A}})_{R\in\tau},(\neg R^{\mathfrak{A}})_{R\in\tau},(x\ne y)^{\mathfrak{A}}) \in K'
\]
from which it follows that $\mathfrak{A} \vDash \psi$.
Thus, $K$ is definable in $\mathcal{L}(\mathcal{Q}_{\hom})$ so by Lemma \ref{possibly_empty} we have $\mathcal{L}(\mathcal{Q}_{\emb}) \leq \mathcal{L}(\mathcal{Q}_{\hom})$, and since every homomomorphism-closed quantifier is also embedding-closed we have  $\mathcal{L}(\mathcal{Q}_{\emb}) \equiv \mathcal{L}(\mathcal{Q}_{\hom})$
\end{proof}

\section{$0$-$1$ law}\label{law}
In \cite{Dawar: 2010}, Dawar and Gr\"adel showed that logic $\mathcal{L}^{\omega}_{\infty \omega}$ (finite variable logic) extended with finitely many embedding-closed quantifiers of finite width has a $0$-$1$ law. 
We start our investigation of embedding-closed quantifiers by exhibiting this proof here.  

The notation $\mu(P) = r$ means that the asymptotic probability of a property $P$ is $r$.
A structure $\mathfrak{A}$ is \emph{homogeneous} if every isomorphism between finitely generated substructures of $\mathfrak{A}$ can be extended to an atomorphism of $\mathfrak{A}$.
Let $\tau$ be a relational vocabulary.
The \emph{random $\tau$-structure} is the unique homogeneous countable $\tau$-structure into which any finite $\tau$-structure can be embedded.
The following is a well-known fact:

\begin{theorem}
If $P$ is $\mathcal{L}_{\omega\omega}$-definable property that is true in the random structure then $\mu(P) = 1$.
\end{theorem}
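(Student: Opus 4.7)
The plan is to use the classical extension-axiom argument of Glebskii et al.\ and Fagin. For each pair of finite $\tau$-structures $\mathfrak{A}\subseteq\mathfrak{B}$, introduce the \emph{extension axiom} $\sigma_{\mathfrak{A},\mathfrak{B}}$: a first-order sentence asserting that every embedding of $\mathfrak{A}$ into the ambient structure can be extended to an embedding of $\mathfrak{B}$. Write $T_{\tau}$ for the collection of all such sentences together with the (finitely many up to isomorphism) sentences asserting which finite $\tau$-structures can actually occur.

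The first step is a direct probabilistic estimate: for fixed finite $\mathfrak{A}\subseteq\mathfrak{B}$ and a uniformly random $\tau$-structure on $n$ elements, the probability that a given tuple realizing $\mathfrak{A}$ has \emph{no} suitable extension on the remaining $n-|A|$ elements is at most $(1-p)^{n-|A|}$, where $p>0$ depends only on $\mathfrak{A},\mathfrak{B}$ and $\tau$; summing over the at most $n^{|A|}$ candidate tuples gives $\mu(\sigma_{\mathfrak{A},\mathfrak{B}})=1$. Since first-order logic is countable, only countably many extension axioms exist over $\tau$, so $\mu(\psi)=1$ for every $\psi\in T_{\tau}$.

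The second step is model-theoretic: $T_{\tau}$ is complete, and its unique countable model up to isomorphism is precisely the random $\tau$-structure $\mathfrak{R}$. Completeness follows by a standard back-and-forth: any two countable models of $T_{\tau}$ admit an isomorphism built stepwise, since each extension axiom lets us enlarge a partial isomorphism between finitely generated substructures to accommodate one more element. That $\mathfrak{R}$ itself satisfies $T_{\tau}$ is immediate from homogeneity and universality for finite $\tau$-structures.

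The final step combines these. If $\varphi$ is a first-order sentence true in $\mathfrak{R}$, then by completeness of $T_{\tau}$ one has $T_{\tau}\vDash\varphi$, so by the compactness theorem for $\mathcal{L}_{\omega\omega}$ there exist $\sigma_1,\ldots,\sigma_k\in T_{\tau}$ with $\sigma_1\wedge\cdots\wedge\sigma_k\vDash\varphi$. Hence
\[
\mu(\varphi)\geq \mu(\sigma_1\wedge\cdots\wedge\sigma_k)=1,
\]
since the finite intersection of probability-one events still has probability one. The step I expect to be slightly delicate is the probabilistic estimate for extension axioms when $\tau$ contains relation symbols of mixed arities, as one must verify that the quantifier-free type one wishes to realize on the extension is compatible with some positive-probability event under the uniform measure; the rest is routine compactness plus back-and-forth.
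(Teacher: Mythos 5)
The paper does not prove this statement at all: it is quoted as ``a well-known fact'' immediately before Lemma \ref{random}, so there is no in-paper argument to compare yours against. Your proposal is the standard Glebskii--Fagin extension-axiom proof and it is correct in all essentials: each extension axiom has asymptotic probability $1$, the theory $T_\tau$ of extension axioms is $\aleph_0$-categorical with no finite models (hence complete by \L o\'s--Vaught or directly by back-and-forth), its countable model is the random structure, and completeness plus compactness reduces any sentence true in $\mathfrak{R}$ to a finite conjunction of measure-one axioms. The step you flag as delicate is in fact unproblematic: for a finite relational $\tau$ and a fixed tuple $\overline{a}$ realizing the quantifier-free type of $\mathfrak{A}$, the event that a candidate new element $c$ realizes the prescribed (automatically consistent, since it is realized in $\mathfrak{B}$) quantifier-free type over $\overline{a}$ depends only on the atomic facts about tuples drawn from $\overline{a}\cup\{c\}$ that involve $c$; for distinct candidates these families of facts are disjoint, so the events are independent with common probability $p = 2^{-N}>0$, where $N$ depends only on $|A|$ and $\tau$. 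Two cosmetic points: for extensions by $m>1$ points the exponent in your bound should be $\lfloor (n-|A|)/m\rfloor$ rather than $n-|A|$ (partition the fresh elements into disjoint blocks), though it is cleaner to state only one-point extension axioms and derive the rest by induction; and the countability of the axiom set plays no role---what you actually use is that each individual axiom has measure one and that compactness extracts finitely many of them.
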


\begin{lemma}\label{random}
Let $\tau$ be a finite relational vocabulary,
$\mathfrak{A}$ a finite $\tau$-structure and $\overline{a} \in A^n$ for some $n<\omega$.
Suppose that $t$ is the atomic type of $\overline{a}$ and 
set
\[
	P = \{\mathfrak{B} \in \Str[\tau] \colon \text{for all } \overline{b} \in B^n, \text{ if } \mathfrak{B} \vDash t(\overline{b}) \text{ then } (\mathfrak{A},\overline{a}) \leq (\mathfrak{B},\overline{b}) \}.
\]
Then $\mu(P) = 1$.
\end{lemma}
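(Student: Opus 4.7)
The plan is to show that $P$ is definable by a first-order sentence $\varphi_P$ that holds in the countable random $\tau$-structure $\mathfrak{R}$, and then invoke the preceding theorem. The construction of $\varphi_P$ relies on the finiteness of both $\tau$ and $\mathfrak{A}$. Fix an enumeration $a_1,\ldots,a_m$ of $A$ with $(a_1,\ldots,a_n)=\overline{a}$, and let $s(x_1,\ldots,x_m)$ be (the conjunction of) the atomic $m$-type of this tuple in $\mathfrak{A}$, which is a quantifier-free first-order formula since $\tau$ is finite relational. An embedding $(\mathfrak{A},\overline{a})\leq(\mathfrak{B},\overline{b})$ exists precisely when there are $b_{n+1},\ldots,b_m\in B$ with $\mathfrak{B}\vDash s(\overline{b},b_{n+1},\ldots,b_m)$, so $P$ is defined by
\[
\varphi_P \;:=\; \forall\overline{y}\,\bigl(t(\overline{y}) \to \exists z_{n+1}\cdots\exists z_m\, s(\overline{y},z_{n+1},\ldots,z_m)\bigr).
\]

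Next I would verify that $\mathfrak{R}\vDash\varphi_P$. Given $\overline{b}\in R^n$ with $\mathfrak{R}\vDash t(\overline{b})$, the finite structure $\mathfrak{A}$ embeds into $\mathfrak{R}$ via some $g\colon\mathfrak{A}\to\mathfrak{R}$; then $g(\overline{a})$ realises the type $t$ in $\mathfrak{R}$, so the map $g(\overline{a})\mapsto\overline{b}$ is an isomorphism between the finitely generated substructures of $\mathfrak{R}$ that they generate. By homogeneity of $\mathfrak{R}$, this partial isomorphism extends to an automorphism $\pi$ of $\mathfrak{R}$, and $\pi\circ g$ is then an embedding of $(\mathfrak{A},\overline{a})$ into $(\mathfrak{R},\overline{b})$. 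Applying the preceding theorem to $\varphi_P$ yields $\mu(P)=1$.

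I do not anticipate a real obstacle; the only point requiring care is the choice of the enumeration of $A$, so that any repetitions in $\overline{a}$ are already encoded in the type $t$ and are automatically mirrored by any $\overline{b}$ satisfying $t$, which makes the compact formula $\varphi_P$ correct even when $\overline{a}$ is non-injective.
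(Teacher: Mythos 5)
Your proof is correct and follows essentially the same route as the paper: show $P$ is first-order definable and holds in the random structure (via an embedding of $\mathfrak{A}$ composed with an automorphism supplied by homogeneity), then apply the $0$-$1$ law for the random structure. You are in fact somewhat more careful than the paper's own argument, which leaves the defining sentence implicit and writes the automorphism as taking $\overline{a}$ rather than its image $f(\overline{a})$ to $\overline{b}$.
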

\begin{proof}
Denote the random $\tau$-structure by $\mathfrak{R}$ . 
The structure $\mathfrak{A}$ is embeddable into $\mathfrak{R}$ by, say, an embedding $f$. Let $\overline{b}$ be a tuple in $R$ whose atomic type is $t$.
Since $\mathfrak{R}$ is homogeneous, there is an automorphism $h$ that takes $\overline{a}$ to $\overline{b}$.
Thus, $h \circ f$ is an embedding of $(\mathfrak{A},\overline{a})$ into $(\mathfrak{R},\overline{b})$, so $P$ holds in the random structure, and since $P$ is $\mathcal{L}_{\omega\omega}$-definable, we have $\mu(P) = 1$.
\end{proof}

\begin{lemma}[\cite{Dawar: 2010}]\label{asympt2}
Let $\tau$ be a finite relational vocabulary, $Q$ embedding-closed quantifier of finite width $k$ and $\psi_0,\dots,\psi_{k-1}$ quantifier-free $\tau$-formulas.
There is a quantifier-free $\tau$-formula $\vartheta$ such that
$\forall \overline{x}(\vartheta \leftrightarrow Q(\overline{y}_i\psi_i)_{i<k})$
has asymptotic probability $1$.
\end{lemma}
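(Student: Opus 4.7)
My plan is to classify tuples by their atomic $\tau$-type and show that the truth of $Q(\overline{y}_i\psi_i)_{i<k}$ depends, asymptotically almost surely, only on the atomic type of the tuple of free variables. Since $\tau$ is finite and relational and the number $n$ of free variables is fixed, there are only finitely many atomic $n$-types, and each is equivalent (via Lemma \ref{quant}) to a finite conjunction of literals; any disjunction of such types is therefore a quantifier-free $\tau$-formula, which will be the candidate $\vartheta$.

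I would call an atomic $n$-type $t(\overline{x})$ \emph{positive} if some finite $\tau$-structure $\mathfrak{B}$ and tuple $\overline{b}\in B^n$ realizing $t$ satisfy $(B,(\psi_i^{\mathfrak{B},\overline{b}})_{i<k})\in\mathcal{K}_Q$, and set
\[
\vartheta(\overline{x}) \;:=\; \bigvee\{\,t(\overline{x}) : t \text{ is positive}\,\}.
\]
The non-positive direction is immediate: if $t$ is not positive then, by definition, no $(\mathfrak{A},\overline{a})$ with $\overline{a}$ of type $t$ can have $(A,(\psi_i^{\mathfrak{A},\overline{a}})_{i<k})\in\mathcal{K}_Q$, so $Q(\overline{y}_i\psi_i)_{i<k}(\overline{a})$ and $\vartheta(\overline{a})$ are both false, \emph{deterministically}.

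For the positive direction, for each positive $t$ I would fix a witness $(\mathfrak{B}_t,\overline{b}_t)$ and apply Lemma \ref{random} to it, obtaining an event $E_t$ of asymptotic probability $1$ on which $(\mathfrak{B}_t,\overline{b}_t)\leq(\mathfrak{A},\overline{a})$ for every $\overline{a}\in A^n$ of type $t$. Because each $\psi_i$ is quantifier-free and hence preserved by embeddings, any such $\tau$-embedding is simultaneously a $\tau_Q$-embedding of $(B_t,(\psi_i^{\mathfrak{B}_t,\overline{b}_t})_{i<k})$ into $(A,(\psi_i^{\mathfrak{A},\overline{a}})_{i<k})$; embedding-closedness of $\mathcal{K}_Q$ then forces the latter structure into $\mathcal{K}_Q$, so both $Q(\overline{y}_i\psi_i)_{i<k}(\overline{a})$ and $\vartheta(\overline{a})$ hold on $E_t$.

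Finally, intersecting the finitely many $E_t$ (one per positive type) preserves asymptotic probability $1$, and on this intersection the equivalence $\forall\overline{x}(\vartheta\leftrightarrow Q(\overline{y}_i\psi_i)_{i<k})$ holds identically. I do not foresee a serious obstacle: the hypotheses that $k$ and $\tau$ are finite are precisely what keep the set of atomic types (and hence the disjunction defining $\vartheta$) finite, and Lemma \ref{random} does all the probabilistic work. The only point that has to be checked with some care is that preservation of quantifier-free formulas under $\tau$-embeddings allows a $\tau$-embedding to be reinterpreted as the required $\tau_Q$-embedding between the derived structures, which is routine.
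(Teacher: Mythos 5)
Your proposal is correct and follows essentially the same route as the paper's proof: you define $\vartheta$ as the disjunction of exactly those atomic types realized together with $Q(\overline{y}_i\psi_i)_{i<k}$ in some finite structure, handle the realizing types via Lemma \ref{random} and preservation of the quantified formula under embeddings (the content of Lemma \ref{preservation}, which you re-derive inline), and intersect the finitely many almost-sure events. The only cosmetic difference is that the paper phrases the positive direction as a proof by contradiction showing $\mu(\exists\overline{x}(\vartheta\wedge\neg\varphi))=0$, whereas you argue directly on the good event; these are the same argument.
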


\begin{proof}
Write $\varphi := Q(\overline{y}_i\psi_i)_{i < k}$, and set
\begin{align*}
	\vartheta := \bigvee\{t \colon &t \text{ is an atomic type and } (\mathfrak{A},\overline{a}) \vDash t \land \varphi \\
	&\text{ for some finite } \tau \text{-structure } \mathfrak{A} \text{ and tuple }\overline{a}\}.
\end{align*}

We clearly have $\mathfrak{A} \vDash \forall \overline{x}(\varphi \rightarrow \vartheta)$ for all finite $\tau$-structures $\mathfrak{A}$. 
For the other direction, if $\vartheta$ is an empty disjunction then $\varphi$ defines the empty relation on all finite structures thus being equivalent to a quantifier-free formula. Therefore, assume that $\vartheta$ is not empty. 

For each type $t$ in $\vartheta$, choose a pair $(\mathfrak{A},\overline{a})_t$ such that $(\mathfrak{A},\overline{a})_t \vDash t \land \varphi$ and $\mathfrak{A}$ is finite. Such pairs exist by the definition of $\vartheta$.
Let $\mathfrak{B}$ be a finite $\tau$-structure such that $\mathfrak{B} \vDash \exists \overline{x}(\vartheta \land \neg \varphi)$.
Then there is a tuple $\overline{b}$ in $B$ such that $(\mathfrak{B},\overline{b}) \vDash \vartheta \land \neg \varphi$.
Let $t$ be the atomic type of $\overline{b}$.
Then $(\mathfrak{A},\overline{a})_t \vDash t \land \varphi$, so if $(\mathfrak{A},\overline{a}) \leq (\mathfrak{B},\overline{b})$ then by Lemma \ref{preservation}, $(\mathfrak{B},\overline{b}) \vDash \varphi$ which is contradiction.
Thus, $(\mathfrak{A},\overline{a}) \nleq (\mathfrak{B},\overline{b})$, so by Lemma \ref{random}, $\mu(\exists \overline{x}(\vartheta \land \neg \varphi)) = 0$, so $\mu(\forall \overline{x}(\vartheta \rightarrow \varphi)) = 1$.
\end{proof}

\begin{theorem}[\cite{Dawar: 2010}]
Let $\tau$ be a finite relational vocabulary and
$\mathcal{Q}$ a finite set of embedding-closed quantifiers of finite width.
For any $\tau$-formula $\varphi \in \mathcal{L}^{\omega}_{\infty\omega}(\mathcal{Q})$ there is a quantifier-free $\tau$-formula $\vartheta$ such that $\forall \overline{x}(\vartheta \leftrightarrow \varphi)$
has asymptotic probability $1$.
\end{theorem}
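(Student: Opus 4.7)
My approach is a structural induction on $\varphi \in \mathcal{L}^{\omega}_{\infty\omega}(\mathcal{Q})$. Fix a finite set of variables $\{x_1,\dots,x_k\}$ containing all those used in $\varphi$; then every subformula uses variables only from this set, and since $\tau$ is finite relational, there are only finitely many atomic $k$-types, and consequently only finitely many quantifier-free $\tau$-formulas in these variables up to logical equivalence. The atomic case is handled by taking $\vartheta = \varphi$, and negation is handled by passing to $\neg\vartheta'$.

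For the quantifier case $\varphi = Q(\overline{y}_i\varphi_i)_{i<\ell}$ with $Q \in \mathcal{Q}$ (noting that $\exists$ is itself an embedding-closed quantifier of width $1$, so it falls under the same treatment), I would apply the inductive hypothesis to each of the finitely many $\varphi_i$ to obtain q.f.\ $\vartheta_i$; a finite intersection of measure-$1$ events then shows that $\varphi$ is asymptotically equivalent to $Q(\overline{y}_i\vartheta_i)_{i<\ell}$, to which Lemma~\ref{asympt2} applies to produce the required q.f.\ $\vartheta$.

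The main case, and the main obstacle, is the infinitary conjunction $\varphi = \bigwedge_{i \in I}\varphi_i$ (the disjunction being dual). The inductive hypothesis supplies q.f.\ $\vartheta_i$ for each $i$; since there are only finitely many inequivalent q.f.\ formulas in $k$ variables, these fall into finitely many classes with representatives $\psi_1,\dots,\psi_m$, and I would set $\vartheta := \bigwedge_{j=1}^{m}\psi_j$. Choosing one index $i_j \in I$ per class with $\vartheta_{i_j} \equiv \psi_j$, the direction $\varphi \to \vartheta$ follows a.s.\ from the finite conjunction of the equivalences $\varphi_{i_j} \leftrightarrow \psi_j$.

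The direction $\vartheta \to \varphi$ is the delicate part: because $I$ may be infinite, one cannot simply intersect the measure-$1$ equivalences $\varphi_i \leftrightarrow \vartheta_i$ over all $i$. I would handle this type by type, using that on almost every finite structure the extension axioms underlying Lemma~\ref{random} hold, so that tuples of a fixed atomic type $t$ are sufficiently interchangeable via embeddings into the random structure that the behavior of every $\varphi_i$ on such tuples is a.s.\ determined by whether $t \vDash \vartheta_i$. This yields the required uniformity across all $i \in I$ from only the finitely many equivalence classes of the $\vartheta_i$, closing the induction.
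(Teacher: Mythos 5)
Your skeleton is right and your diagnosis of where the difficulty lies is also right, but the step you offer to resolve it does not work, so there is a genuine gap at the infinitary conjunction. With the inductive hypothesis stated as ``$\varphi_i$ is \emph{asymptotically} equivalent to $\vartheta_i$'', each index $i \in I$ comes with its own exceptional class $E_i$ of asymptotic measure $0$, and for the direction $\vartheta \to \varphi$ you need, on a single structure $\mathfrak{A}$, the equivalence $\varphi_i \leftrightarrow \vartheta_i$ for \emph{every} $i \in I$ simultaneously; since $I$ may be infinite (even uncountable), $\bigcup_{i\in I} E_i$ need not have measure $0$. The fact that the $\vartheta_i$ fall into finitely many equivalence classes does not help: knowing $\vartheta_i \equiv \psi_j$ and that $\varphi_{i_j} \leftrightarrow \psi_j$ holds almost surely says nothing about whether $\varphi_i \leftrightarrow \psi_j$ holds on the particular structure at hand. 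The appeal to extension axioms begs the question --- the claim that ``the behavior of every $\varphi_i$ on tuples of type $t$ is almost surely determined by $t$'' is precisely the uniform quantifier-elimination statement being proved, and its exceptional set again depends on $i$.

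The paper's remedy is to do all of the probability up front and strengthen the inductive claim to a pointwise one. Since $\tau$ is finite and only $k$ variables occur, there are (up to equivalence) only finitely many formulas $\psi_0,\dots,\psi_{l-1}$ of the form $Q(\overline{y}_i\vartheta_i)_{i<n}$ with $Q \in \mathcal{Q}$ and the $\vartheta_i$ quantifier-free; Lemma~\ref{asympt2} gives quantifier-free $\chi_i$ with $\mu(\forall\overline{x}(\psi_i\leftrightarrow\chi_i)) = 1$, and, letting $C_i$ be the class of finite structures on which $\forall\overline{x}(\psi_i\leftrightarrow\chi_i)$ is true, one fixes $C = \bigcap_{i<l} C_i$, which still has asymptotic measure $1$ because $l$ is finite. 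The induction then proves the stronger statement that every $\varphi \in \mathcal{L}^{k}_{\infty\omega}(\mathcal{Q})$ is equivalent to a quantifier-free formula \emph{on every structure in $C$}. With that hypothesis the infinitary conjunction is immediate (on each $\mathfrak{A} \in C$ every conjunct is literally equivalent to a quantifier-free formula, and a conjunction of finitely many inequivalent quantifier-free formulas is quantifier-free), and the quantifier case reduces to one of the $\psi_m$, hence to $\chi_m$, on $C \subseteq C_m$. If you restructure your argument this way --- replacing ``almost surely equivalent'' by ``equivalent on all of $C$'' throughout --- the rest of your proposal goes through.
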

\begin{proof}
Let $k$ be a natural number. There are, up to logical equivalence, finitely many quantifier-free formulas that use only variables $x_0,\dots,x_{k-1}$. Let $\psi_0,\dots,\psi_{l-1}$ be an enumeration of all $\mathcal{L}^k_{\infty\omega}$-formulas of the form $Q(\overline{y}_i \vartheta_i)_{i < n}$ with all $\vartheta_i$ quantifier-free. Note that $l$ is finite. By Lemma \ref{asympt2}, for each $i < l$ there is a quantifier free formula $\chi_i$ such that $\forall \overline{x}(\psi_i \leftrightarrow \chi_i)$ has asymptotic probability $1$. For every $i < l$, let $C_i$ be the set of all isomorphism types of finite structures on which $\forall \overline{x}(\psi_i \leftrightarrow \chi_i)$ is true. Put $C = \bigcap_{i < l}C_i$. Then $\mu(C) = 1$, since $l$ is finite and $\mu(C_i) = 1$ for all $i$.

Now we can show that for all $\varphi \in \mathcal{L}^k_{\infty\omega}(\mathcal{Q})$ there is a quantifier-free formula $\vartheta$ such that $\mathfrak{A} \vDash \forall \overline{x}(\vartheta \leftrightarrow \varphi)$ for all $\mathfrak{A} \in C$ from which the claim follows.
We use induction on the structure of $\varphi$. If $\varphi$ is quantifier-free, there is nothing to prove.
It is also clear that the claim holds for $\varphi = \neg \alpha$ and for $\varphi = \bigwedge_{i \in I}\alpha_i$ if it holds for $\alpha$ and all $\alpha_i$, respectively.
Assume that $\varphi = Q(\overline{y}_i \alpha_i)_{i < n}$ and the claim holds for all $\alpha_i$.
By the induction hypothesis, there are quantifier-free formulas $\vartheta_i$ such that
\[
	\mathfrak{A} \vDash \forall \overline{x}(\varphi \leftrightarrow Q(\overline{y}_i\vartheta_i)_{i < n})
\]
for all $\mathfrak{A} \in C$.
Since $Q(\overline{y}_i\vartheta_i)_{i<n} = \psi_m$ for some $m < l$, we have 
$\forall \overline{x}(\varphi \leftrightarrow \chi_m)$ on all structures of $C_m$, and therefore on $C$, because $C \subseteq C_m$.
\end{proof}

\begin{corollary}[\cite{Dawar: 2010}]
For any finite set $\mathcal{Q}$ of embedding-closed quantifiers of finite width,
the logic $\mathcal{L}^{\omega}_{\infty\omega}(\mathcal{Q})$ has a $0$-$1$ law.
\end{corollary}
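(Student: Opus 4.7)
The plan is to derive the corollary as an almost immediate consequence of the preceding theorem, which already does the substantive work by producing, for any $\mathcal{L}^{\omega}_{\infty\omega}(\mathcal{Q})$-formula $\varphi$, a quantifier-free $\vartheta$ with $\mu\bigl(\forall \overline{x}(\vartheta\leftrightarrow\varphi)\bigr)=1$. I would start by fixing a sentence $\varphi\in\mathcal{L}^{\omega}_{\infty\omega}(\mathcal{Q})[\tau]$ and applying the theorem to obtain such a $\vartheta$. The goal then reduces to showing that the only possible asymptotic probabilities of a quantifier-free sentence in a \emph{finite relational} vocabulary are $0$ and $1$.

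Next, I would argue that we may in fact take $\vartheta$ itself to be a sentence. Inspecting the proof of the theorem, the inductive construction preserves free variables, so the $\vartheta$ associated to $\varphi$ has the same free variables as $\varphi$, namely none. Alternatively, one observes that $\forall \overline{x}(\vartheta\leftrightarrow\varphi)$ is simply $\vartheta\leftrightarrow\varphi$ when $\overline{x}$ is empty, and one can replace the free variables of $\vartheta$ by a universal closure without changing the asymptotic probability of the equivalence. Either way, we obtain a quantifier-free sentence $\vartheta'$ with $\mu(\vartheta'\leftrightarrow\varphi)=1$.

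The key observation is that in a finite relational vocabulary with no constant or function symbols, there are no ground atomic formulas at all, so every quantifier-free sentence is logically equivalent to either $\top$ or $\bot$. Hence $\vartheta'$ is one of these, and $\vartheta'\leftrightarrow\varphi$ being asymptotically almost surely true forces $\mu(\varphi)=1$ in the former case and $\mu(\varphi)=0$ in the latter. This establishes the $0$-$1$ law.

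I do not anticipate a serious obstacle: the only point worth checking carefully is the bookkeeping about free variables in the previous theorem, to justify that a sentence yields a quantifier-free sentence rather than a formula with spurious free variables. Once that is settled, the vacuity of quantifier-free sentences over a purely relational vocabulary closes the argument immediately.
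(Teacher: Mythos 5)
Your argument is correct and matches the intended (and, in the paper, unstated) deduction: the preceding theorem hands you a quantifier-free $\vartheta$ asymptotically equivalent to the sentence $\varphi$, and over a purely relational vocabulary a quantifier-free sentence contains no atomic subformulas at all, hence is equivalent to $\top$ or $\bot$, forcing $\mu(\varphi)\in\{0,1\}$. One small caveat: your fallback route of passing to the universal closure of $\vartheta$ destroys quantifier-freeness, so the vacuity argument would no longer apply and you would instead have to invoke the classical first-order $0$-$1$ law; your primary route, checking that the inductive construction in the theorem preserves free variables so that a sentence yields a quantifier-free sentence, is the one that closes the argument cleanly.
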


\section{Quantifier elimination for $\mathcal{L}_{\infty \omega}(\mathcal{Q}_{\emb})$ and \\ some undefinability results}\label{chain_section}

In this section we introduce a method that allows us to produce some undefinability results for logics with embedding-closed quantifiers that cannot be established by using a $0$-$1$ law. For instance we will show that equicardinality cannot be defined in $\mathcal{L}_{\infty \omega}(\mathcal{Q}_{\emb})$.

\begin{definition}
A structure $\mathfrak{A}$ is \emph{homogeneous} if every isomorphism between finitely generated substructures of $\mathfrak{A}$ can be extended to an automorphism of $\mathfrak{A}$. We say that $\mathfrak{A}$ is \emph{quasi-homogeneous} if every isomorphism between finitely generated substructures of $\mathfrak{A}$ can be extended to an embedding of $\mathfrak{A}$ into itself.
\end{definition}

Note that a structure $\mathfrak{A}$ is homogeneous (quasi-homogeneous) if and only if for all tuples $\overline{a}$ and $\overline{b}$ of the same atomic type there is an automorphism (embedding) of $\mathfrak{A}$ taking $\overline{a}$ to $\overline{b}$.
It is clear that
every countable quasi-homogeneous structure is homogeneous.
Let $\mathfrak{R} = (R\setminus\{r\},\leq)$ be the usual ordering of real numbers with some number $r$ removed.
This "punctured" real line is an example of a structure that is quasi-homogeneous but not homogeneous.

\begin{definition}
Suppose $\mathcal{L}$ is a logic.
We say that a structure $\mathfrak{A}$ has \emph{quantifier elimination for} $\mathcal{L}$ if for all formulas $\varphi \in \mathcal{L}$ there is a quantifier-free formula $\vartheta$ such that $\mathfrak{A} \vDash \forall \overline{x}(\vartheta \leftrightarrow \varphi)$.
\end{definition}

\begin{theorem}\label{homog}
A $\tau$-structure $\mathfrak{A}$ has quantifier elimination for $\mathcal{L}_{\infty\omega}(\mathcal{Q}_{\emb})$ if and only if it is quasi-homogeneous.
\end{theorem}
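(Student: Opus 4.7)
My plan is to prove the two directions separately. The forward direction (quasi-homogeneity $\Rightarrow$ QE) proceeds by induction on formula complexity, while the converse is settled by constructing one explicit embedding-closed quantifier whose associated formula distinguishes any two tuples of the same atomic type that are not linked by a self-embedding.

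For the direction quasi-homogeneous $\Rightarrow$ QE, I induct on the construction of $\varphi \in \mathcal{L}_{\infty\omega}(\mathcal{Q}_{\emb})$. Atomic, negation, and infinitary conjunction cases are routine bookkeeping with the induction hypothesis, so the essential step is the quantifier case $\varphi = Q(\overline{y}_i\,\alpha_i)_{i<\kappa}$ with $Q \in \mathcal{Q}_{\emb}$. By induction I may replace each $\alpha_i$ by a quantifier-free $\vartheta_i$ that is equivalent to $\alpha_i$ in $\mathfrak{A}$; Lemma \ref{preservation} then tells me that the resulting formula is preserved by every embedding $\mathfrak{A} \to \mathfrak{A}$. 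Applying quasi-homogeneity in both directions to any two tuples of the same atomic type, I conclude that $\varphi^{\mathfrak{A}}$ is a union of atomic type classes. Taking $T$ to be the set of atomic types realised inside $\varphi^{\mathfrak{A}}$, the formula $\vartheta(\overline{x}) := \bigvee_{t \in T} t(\overline{x})$ is a quantifier-free formula of $\mathcal{L}_{\infty\omega}$ (each $t$ being realised as the—possibly infinitary—conjunction of its literals) and is equivalent to $\varphi$ over $\mathfrak{A}$.

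For the converse, I construct an $\mathcal{L}_{\infty\omega}(\mathcal{Q}_{\emb})$-formula $\varphi(\overline{x})$ whose truth at a tuple $\overline{c}$ over $\mathfrak{A}$ captures the existence of an embedding $\mathfrak{A} \to \mathfrak{A}$ sending a prescribed tuple $\overline{a}$ to $\overline{c}$. Replace each function symbol of $\tau$ by its graph to obtain the relationalisation $\tau^{\mathrm{rel}}$ (embeddings of $\mathfrak{A}$ and of its $\tau^{\mathrm{rel}}$-encoding correspond bijectively). Fix tuples $\overline{a}, \overline{b} \in A^n$ of the same atomic type and set $\sigma := \tau^{\mathrm{rel}} \cup \{P_1,\dots,P_n\}$ with fresh unary symbols $P_i$. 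Define
\[
K := \{\mathfrak{C} \in \Str[\sigma] : \text{there is an embedding } g\colon \mathfrak{A} \to \mathfrak{C}|_{\tau^{\mathrm{rel}}} \text{ with } g(a_i) \in P_i^{\mathfrak{C}} \text{ for each } i\};
\]
closure of $K$ under embeddings follows from composition of embeddings. Setting
\[
\varphi(\overline{x}) := Q_K\bigl((\overline{z}_R\,R(\overline{z}_R))_{R \in \tau^{\mathrm{rel}}},\,(y_i\,(y_i = x_i))_{1 \le i \le n}\bigr),
\]
evaluation of $\varphi$ at $\overline{c}$ in $\mathfrak{A}$ reduces exactly to the existence of a self-embedding of $\mathfrak{A}$ sending $\overline{a}$ to $\overline{c}$; in particular $(\mathfrak{A},\overline{a}) \vDash \varphi$ via the identity. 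The hypothesised QE supplies a QF $\vartheta \equiv_{\mathfrak{A}} \varphi$, and since quantifier-free truth depends only on the atomic type of its argument and $\overline{a}, \overline{b}$ share this type, $(\mathfrak{A},\overline{b}) \vDash \vartheta$, hence $(\mathfrak{A},\overline{b}) \vDash \varphi$, yielding the required embedding.

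The main obstacle is bookkeeping in the converse: verifying that $K$ sits over a purely relational vocabulary (requiring the graph encoding of any function symbols) and that the interpretation used in $\varphi$ faithfully reproduces $\mathfrak{A}$ on its $\tau^{\mathrm{rel}}$-part, so that membership in $K$ genuinely expresses the desired self-embedding condition. Once this set-up is in place, both directions rest on Lemma \ref{preservation} and standard manipulation of atomic types inside $\mathcal{L}_{\infty\omega}$.
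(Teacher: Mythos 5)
Your proposal is correct and follows essentially the same route as the paper's proof: for quasi-homogeneity $\Rightarrow$ QE you take the disjunction of atomic types realised in $\varphi^{\mathfrak{A}}$ and use preservation of $Q(\overline{x}_i\vartheta_i)_i$ with quantifier-free arguments under the self-embeddings supplied by quasi-homogeneity, and for the converse you build the embedding-closed quantifier whose defining class consists of all structures embedding a marked copy of $\mathfrak{A}$. The only cosmetic differences are that you mark the tuple $\overline{a}$ with $n$ unary predicates rather than the paper's single $n$-ary predicate $P$ with $P^{\mathfrak{A}'}=\{\overline{a}\}$, and you carry out explicitly the relationalisation of function symbols that the paper dismisses as a straightforward generalisation.
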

\begin{proof}
Assume for simplicity that $\tau$ is relational vocabulary.
The proof can be generalized in a straightforward way to vocabularies with constant and function symbols.
Suppose first that $\mathfrak{A}$ has quantifier elimination.
Let $\overline{a} = (a_1,\dots,a_k)$ and $\overline{b} = (b_1,\dots,b_k)$ be tuples of elements of $A$ having the same atomic type.
We want to find embedding of $\mathfrak{A}$ into itself that maps $\overline{a}$ to $\overline{b}$.
Let $\tau' = \tau \cup \{P\}$ where $P$ is a new relation symbol of arity $k$.
Define a $\tau'$-structure $\mathfrak{A}'$ by setting $\mathfrak{A}'\upharpoonright \tau = \mathfrak{A}$ and $P^{\mathfrak{A}'} = \{\overline{a}\}$.
Let $Q$ be a quantifier whose defining class is
\[
	K_Q = \{\mathfrak{B} \in \Str[\tau'] \colon \mathfrak{A}' \text{ is embeddable into } \mathfrak{B}\},
\]
and suppose $\varphi \in \mathcal{L}_{\infty\omega}(\mathcal{Q}_{\emb})[\tau]$ is the next formula:
\[
	\varphi(\overline{z}) := Q((\overline{x}_RR)_{R\in\tau},\overline{x}_P=\overline{z}).
\]
Then $\mathfrak{A} \vDash \varphi(\overline{a})$ and, since $\mathfrak{A}$ has quantifier elimination and $\overline{a}$ and $\overline{b}$ have the same atomic type, we have $\mathfrak{A} \vDash \varphi(\overline{b})$, so there is an embedding $f$ of $\mathfrak{A}'$ into $(A,(S^{\mathfrak{A}})_{S\in\tau}, \{\overline{b}\})$ which clearly is a wanted embedding.

For the other direction, assume that $\mathfrak{A}$ is quasi-homogeneous. Let $Q \in \mathcal{Q}_{\emb}$ and suppose $(\psi_R)_{R \in \tau_Q}$ are quantifier-free formulas.
Now set $\varphi := Q(\overline{x}_R\psi_R)_{R \in \tau_Q}$ and denote by $k$ the number of free variables of $\varphi$.
Let
\[
	\vartheta = \bigvee\{t \colon t \text{ is an atomic type and for some } \overline{a} \in A^k, (\mathfrak{A}, \overline{a}) \vDash \varphi \land t\}.
\]
Let $\overline{b} \in A^k$ and $(\mathfrak{A}, \overline{b}) \vDash \vartheta$.
Then $(\mathfrak{A},\overline{b}) \vDash t$ and $(\mathfrak{A},\overline{a}) \vDash \varphi \land t$ for some $\overline{a} \in A^k$ with atomic type $t$.
Since $\mathfrak{A}$ is quasi-homogeneous, there is an embedding $f$ of  $(\mathfrak{A},\overline{a})$ into $(\mathfrak{A},\overline{b})$.
Since $\mathfrak{A},\overline{a} \vDash \varphi$, we have $(A,(\psi^{\mathfrak{A},\overline{a}}_R)_{R \in \tau_Q}) \in K_Q$,
so $(A,(\psi^{\mathfrak{A},\overline{b}}_R)_{R \in \tau_Q}) \in K_Q$
because $Q$ is embedding-closed and $\psi_R$ are quantifier-free.
Thus, $\mathfrak{A} \vDash \forall \overline{x}(\vartheta \rightarrow \varphi)$,
and since clearly $\mathfrak{A} \vDash \forall \overline{x}(\varphi \rightarrow \vartheta)$, we have $\mathfrak{A} \vDash \forall \overline{x}(\vartheta \leftrightarrow \varphi)$.

Thus, by using induction, we can eliminate quantifiers in all formulas $\varphi \in \mathcal{L}_{\infty\omega}(\mathcal{Q}_{\emb})$.
\end{proof}

\subsection{The finite case}

In this subsection, we will consider logics $\mathcal{L}_{\infty\omega}^m$ (the restriction of $\mathcal{L}_{\infty\omega}$ to formulas that use at most $m$ variables) extended with finite number of em\-bed\-ding-closed quantifiers of finite width.
We will show that in a countably infinite chain of quasi-homogeneous structures of finite relational vocabulary,
a formula of such a logic is eventually equivalent to a quantifier-free formula.
This will allow us to demonstrate, among other things, that certain properties of finite structures are not definable in such a logic.

\begin{lemma}\label{quant_elim_finite_lemma}
Let $\tau$ be a finite relational vocabulary, 
$Q$ an embedding-closed quantifier of width $n<\omega$ and $\varphi = Q(x_i\psi_i)_{i<n}$ where all $\psi_i$ are quantifier-free $\tau$-formulas.
Let $(\mathfrak{A}_i)_{i<\omega}$ be a chain of quasi-homogeneous $\tau$-structures. 
Then there is a natural number $k$ and a quantifier-free $\tau$-formula $\vartheta$ such that
\[
	\mathfrak{A}_i \vDash \forall \overline{x}(\varphi \leftrightarrow \vartheta)
\]
for all $k \leq i$.
\end{lemma}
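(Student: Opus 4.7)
The plan is to combine the quantifier-elimination step from the proof of Theorem \ref{homog} with a monotonicity argument along the chain. The idea is that in each quasi-homogeneous $\mathfrak{A}_i$ the formula $\varphi$ is equivalent to a specific disjunction $\vartheta_i$ of atomic types, and this disjunction can only grow as one moves up the chain; since the set of candidate atomic types is finite, the sequence $(\vartheta_i)_{i<\omega}$ must eventually stabilize.

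Concretely, let $k'$ denote the (finite) number of free variables of $\varphi$, and for each $i<\omega$ put
\[
    T_i = \{\, t : t \text{ is an atomic } k'\text{-type of } \tau \text{ with } (\mathfrak{A}_i,\overline{a}) \vDash \varphi \wedge t \text{ for some } \overline{a} \in A_i^{k'}\,\},
\]
and let $\vartheta_i := \bigvee_{t \in T_i} t$ (taking this to be, say, $x_0 \ne x_0$ when $T_i = \emptyset$). By the argument in the backward direction of the proof of Theorem \ref{homog} --- which uses only that $\mathfrak{A}_i$ is quasi-homogeneous and that $Q$ is embedding-closed --- we have $\mathfrak{A}_i \vDash \forall \overline{x}(\varphi \leftrightarrow \vartheta_i)$ for every $i<\omega$.

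The key step is to observe that the sequence $(T_i)_{i<\omega}$ is non-decreasing under inclusion. Indeed, suppose $t \in T_i$ is witnessed by $\overline{a} \in A_i^{k'}$, and let $g \colon \mathfrak{A}_i \to \mathfrak{A}_{i+1}$ be the embedding provided by the chain. Then $g\overline{a}$ has atomic type $t$ because embeddings preserve literals, and $(\mathfrak{A}_{i+1}, g\overline{a}) \vDash \varphi$ by Lemma \ref{preservation}; thus $t \in T_{i+1}$, and iterating gives $T_i \subseteq T_j$ whenever $i \leq j$.

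Finally, since $\tau$ is a finite relational vocabulary there are only finitely many atomic $k'$-types of $\tau$, so $(T_i)_{i<\omega}$ is a non-decreasing sequence in a finite power-set lattice and must stabilize at some $k<\omega$. Setting $\vartheta := \vartheta_k$ then yields a quantifier-free $\tau$-formula with $\mathfrak{A}_i \vDash \forall \overline{x}(\varphi \leftrightarrow \vartheta)$ for all $i \geq k$, as required. No step seems a serious obstacle; the only real checkpoint is the monotonicity, and it reduces directly to Lemma \ref{preservation} together with the fact that embeddings preserve atomic types.
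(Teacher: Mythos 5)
Your proposal is correct and follows essentially the same route as the paper's own proof: the same sets $T_i$ of realized atomic types, the same appeal to Theorem \ref{homog} for the equivalence $\varphi \leftrightarrow \bigvee T_i$ on each quasi-homogeneous $\mathfrak{A}_i$, monotonicity of $(T_i)_{i<\omega}$ via preservation under embeddings, and stabilization because there are only finitely many atomic types. If anything, your explicit use of the image tuple $g\overline{a}$ in the monotonicity step is slightly more careful than the paper's phrasing.
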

\begin{proof}
For each $i < \omega$, let 
\[
	T_i = \{t \colon t \text{ is an atomic type and } (\mathfrak{A}_i,\overline{a}) \vDash \varphi \land  t \text{ for some } \overline{a} \}.
\]
Since all $\mathfrak{A}_i$ are quasi-homogeneous, it follows from Theorem \ref{homog} and Lemma \ref{quant} that 
\[
	\mathfrak{A}_i \vDash \forall \overline{x}(\varphi \leftrightarrow \bigvee T_i).
\]
Now let $i \leq j < \omega$ and $t \in T_i$. We have $(\mathfrak{A}_i,\overline{a}) \vDash \varphi \land t$ for some $\overline{a}$, so $(\mathfrak{A}_j,\overline{a}) \vDash \varphi \land t$ since both $\varphi$ and $t$ are preserved by embeddings. Thus, $t \in T_j$, so $T_i \subseteq T_j$ always when $i \leq j$.
Since there are finitely many distinct atomic $n$-types, the chain $(T_i)_{i < \omega}$ reaches its maximum at some $k < \omega$. Then $\vartheta = \bigvee T_k$ is a quantifier-free $\tau$-formula we want.
\end{proof}

\begin{theorem}\label{finite}
Let $\tau$ be a finite relational vocabulary, $\mathcal{Q}$ a finite set of embed\-ding-closed quantifiers of finite width and 
$(\mathfrak{A}_i)_{i<\omega}$ a chain of quasi-ho\-mo\-ge\-neous $\tau$-structures.
For each $m< \omega$, there is a natural number $N_m$ such that for every formula $\varphi \in \mathcal{L}^m_{\infty \omega}(\mathcal{Q})[\tau]$ there is a quantifier-free $\tau$-formula $\vartheta_{\varphi}$ such that
\[
	\mathfrak{A}_i \vDash \forall \overline{x}(\varphi \leftrightarrow \vartheta_{\varphi})
\]
for all $i \geq N_m$.
\end{theorem}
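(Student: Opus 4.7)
The plan is to follow the same template as the proof of the $0$-$1$ law in Section \ref{law}, with Lemma \ref{quant_elim_finite_lemma} replacing Lemma \ref{asympt2} as the ``one-quantifier elimination step''. Two finiteness observations make the induction go through. First, because $\tau$ is finite relational, there are only finitely many atomic $m$-types, and so by Lemma \ref{quant} the collection $F_m$ of quantifier-free $\tau$-formulas in at most $m$ variables is finite up to logical equivalence. Second, since $\mathcal{Q}$ is finite and each $Q\in\mathcal{Q}$ has finite width, the set
\[
S_m := \{\, Q(\overline{y}_i\vartheta_i)_{i<\mathrm{width}(Q)} : Q\in\mathcal{Q},\ \vartheta_i\in F_m\,\}
\]
is also finite up to equivalence.

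Next I would define $N_m$. For each $\psi\in S_m$, Lemma \ref{quant_elim_finite_lemma} yields a quantifier-free formula $\chi_\psi$ and a threshold $k_\psi<\omega$ with $\mathfrak{A}_i\vDash\forall\overline{x}(\psi\leftrightarrow\chi_\psi)$ for every $i\geq k_\psi$. Since $S_m$ is finite modulo equivalence, I may set $N_m:=\max\{k_\psi:\psi\in S_m\}$, which is a natural number.

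With $N_m$ in hand I would prove, by induction on the construction of $\varphi\in\mathcal{L}^m_{\infty\omega}(\mathcal{Q})[\tau]$, that a quantifier-free $\vartheta_\varphi\in F_m$ exists with $\mathfrak{A}_i\vDash\forall\overline{x}(\varphi\leftrightarrow\vartheta_\varphi)$ for all $i\geq N_m$. The quantifier-free and negation cases are trivial. For an infinitary conjunction $\varphi=\bigwedge_{j\in J}\alpha_j$, the inductive hypothesis supplies $\vartheta_{\alpha_j}\in F_m$ for each $j$; because $F_m$ is finite up to equivalence, the family $\{\vartheta_{\alpha_j}:j\in J\}$ contains only finitely many distinct equivalence classes, so the conjunction collapses (on \emph{all} structures, by pure logic) to a finite conjunction of representatives, which lies in $F_m$. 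For $\varphi=Q(\overline{y}_i\alpha_i)_{i<k_Q}$ the induction hypothesis yields quantifier-free $\vartheta_{\alpha_i}$ equivalent to $\alpha_i$ on every $\mathfrak{A}_i$ with $i\geq N_m$, hence $\varphi$ is equivalent on those $\mathfrak{A}_i$ to the one-quantifier formula $Q(\overline{y}_i\vartheta_{\alpha_i})_{i<k_Q}\in S_m$, which by construction of $N_m$ is equivalent to its quantifier-free substitute $\chi$.

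The main obstacle I anticipate is the bookkeeping for the infinitary connective case: one must carefully verify that an index set $J$ of arbitrary cardinality causes no problem, exploiting that the quantifier-free ``targets'' $\vartheta_{\alpha_j}$ live in the finite set $F_m$ so that a finite set of representatives suffices. One must also check that the $\vartheta_{\alpha_i}$ produced in the quantifier step really do stay within $m$ variables (after renaming bound variables if needed) so that they belong to $F_m$; this is exactly where working in $\mathcal{L}^m_{\infty\omega}$ rather than $\mathcal{L}_{\infty\omega}$ is essential, because it keeps both $F_m$ and $S_m$ finite and lets $N_m$ be independent of $\varphi$.
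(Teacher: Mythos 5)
Your proposal is correct and follows essentially the same route as the paper's proof: enumerate (up to equivalence) the finitely many one-quantifier formulas in at most $m$ variables, take $N_m$ as the maximum of the thresholds supplied by Lemma \ref{quant_elim_finite_lemma}, and induct on the structure of $\varphi$. Your extra care with the infinitary conjunction case (collapsing it to a finite conjunction via the finiteness of $F_m$) is a point the paper passes over as ``clear,'' but it is the same argument.
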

\begin{proof}
Let $\psi_0,\dots,\psi_l$ be an enumeration of all (up to equivalence) the $\tau$-formulas in at most $m$ variables having form $Q(\overline{x}_i\vartheta_i)_{i<n}$ with $Q \in \mathcal{Q}$ and all $\vartheta_i$ quantifier-free.
Note that $l$ is finite.
By Lemma \ref{quant_elim_finite_lemma} for each $\psi_i$ there is $k_i < \omega$ and a quantifier-free $\tau$-formula $\vartheta$ such that
\[
	\mathfrak{A}_j \vDash \forall \overline{x} (\psi_i \leftrightarrow \vartheta)
\]
when $j \geq k_i$.
We claim that we can set $N_m := \max\{k_i : i \leq l\}$. 
We prove the claim by induction on the structure of the formula $\varphi$.
The cases of $\varphi$ atomic, $\varphi = \neg \alpha$ and $\varphi = \bigwedge_{i \in I}\alpha_i$ are clear. Suppose $\varphi = Q(\overline{x}_i\alpha_i)_{i<n}$ and the claim holds for all $\alpha_i$.
Then there are quantifier-free $\tau$-formulas $\vartheta_i$ such that 
\[
	\mathfrak{A}_j \vDash \forall \overline{y}(\varphi \leftrightarrow Q(\overline{x}_i\vartheta_i)_{i<n})
\]
when $j \geq N_m$. Thus, if $j \geq N_m$ then $\varphi$ is equivalent to some $\psi_r$ and therefore to some quantifier-free formula $\vartheta$, so we can set $\vartheta_{\varphi} := \vartheta$.
\end{proof}

\begin{definition}
Let $\mathcal{L}$ be a logic, $\tau$ a vocabulary and $\mathfrak{A}$, $\mathfrak{B}$ $\tau$-structures.
Suppose also that there is an embedding $f$ of $\mathfrak{A}$ into $\mathfrak{B}$.
Let 
\[
	\tau^* = \tau \cup_{\disjoint} \{c_a : a \in A\}
\] 
and denote by $\mathfrak{A}^*$ and $\mathfrak{B}^*$ the $\tau^*$-extensions of $\mathfrak{A}$ and $\mathfrak{B}$, respectively, such that $c_a^{\mathfrak{A}^*} = a$ and $c_a^{\mathfrak{B}^*} = f(a)$ for all $a \in A$. 
Then we write $\mathfrak{A} \preceq_{\mathcal{L}} \mathfrak{B}$ if $\mathfrak{A}^*$ and $\mathfrak{B}^*$ satisfy exactly the same sentences of $\mathcal{L}[\tau^*]$.
\end{definition}

\begin{corollary}\label{finite2}
Let $\tau$ be a finite relational vocabulary, $\mathcal{Q}$ a finite set of embed\-ding-closed quantifiers of finite width and 
$(\mathfrak{A}_i)_{i<\omega}$ a chain of quasi-ho\-mo\-ge\-neous $\tau$-structures.
Let $m<\omega$ and 
write $\mathcal{L} = \mathcal{L}_{\infty\omega}^m(\mathcal{Q})$.
There is a natural number $N_m$ such that
$\mathfrak{A}_i \preceq_{\mathcal{L}} \mathfrak{A}_j$ for all $j \geq i \geq N_m$.
\end{corollary}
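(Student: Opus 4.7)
The plan is to derive Corollary \ref{finite2} directly from Theorem \ref{finite} by realising the constants of $\tau^* \setminus \tau$ as parameters drawn from $A_i$. I would take $N_m$ to be the natural number already produced by Theorem \ref{finite}, fix indices $N_m \leq i \leq j$, and let $f \colon \mathfrak{A}_i \hookrightarrow \mathfrak{A}_j$ be the chain embedding. Because $c_a^{\mathfrak{A}_j^*} = f(a) = f(c_a^{\mathfrak{A}_i^*})$ for every $a \in A_i$, the map $f$ automatically extends to an embedding of $\mathfrak{A}_i^*$ into $\mathfrak{A}_j^*$, so it suffices to check that the two $\tau^*$-expansions satisfy the same $\mathcal{L}[\tau^*]$-sentences.

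Given such a sentence $\psi$, the natural move is to write $\{c_b : b \in B\}$ for the set of expansion constants occurring in $\psi$ and to form the $\tau$-formula $\psi^{\flat}(\overline{y})$ by replacing each $c_b$ with a fresh variable $y_b$. A direct semantic check yields
\[
	\mathfrak{A}_i^* \vDash \psi \iff \mathfrak{A}_i \vDash \psi^{\flat}(\overline{b}),
	\qquad
	\mathfrak{A}_j^* \vDash \psi \iff \mathfrak{A}_j \vDash \psi^{\flat}(f(\overline{b})),
\]
where $\overline{b}$ enumerates $B$. By Theorem \ref{finite} applied to $\psi^{\flat}$, there is a quantifier-free $\tau$-formula $\vartheta$ equivalent to $\psi^{\flat}$ on every $\mathfrak{A}_l$ with $l$ large enough; since quantifier-free formulas are preserved by embeddings in both directions, one has $\mathfrak{A}_i \vDash \vartheta(\overline{b}) \iff \mathfrak{A}_j \vDash \vartheta(f(\overline{b}))$, and chaining the equivalences gives $\mathfrak{A}_i^* \vDash \psi \iff \mathfrak{A}_j^* \vDash \psi$.

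The main obstacle I expect is one of uniformity. A priori the substitution $\psi \mapsto \psi^{\flat}$ can raise the variable count from $m$ to some $m' = m + |B|$ that depends on $\psi$, so the naive application of Theorem \ref{finite} would only yield $N_{m'}$ instead of the uniform $N_m$ promised by the corollary. The key point to verify is that the fresh variables $y_b$ occur in $\psi^{\flat}$ only as free parameters and are never bound by an inner quantifier, so the inductive quantifier-elimination of Theorem \ref{finite} can be retraced at exactly the same nesting depth as for $\psi$ itself, and at each quantifier node one is only eliminating quantifiers binding variables already present in $\psi$; this should allow the stabilization index to be taken independent of $|B|$ and equal to the original $N_m$.
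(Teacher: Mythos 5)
Your first two paragraphs are fine and, when the number $|B|$ of constants occurring in $\psi$ is at most $m$, they reduce to the intended one-line derivation: $\psi^{\flat}$ is then a formula of $\mathcal{L}^m_{\infty\omega}(\mathcal{Q})[\tau]$, Theorem \ref{finite} replaces it by a quantifier-free $\vartheta$ uniformly for all indices $\geq N_m$, and quantifier-free formulas are preserved and reflected by the chain embedding. The gap is in your last paragraph. The stabilization index produced by Lemma \ref{quant_elim_finite_lemma} is not governed by the nesting depth, nor by which variables end up bound; it is governed by the length of the increasing chain $(T_i)_{i<\omega}$ of sets of atomic types, and that chain lives inside the set of atomic $k$-types where $k$ counts \emph{all} free variables of the innermost quantified subformula, parameters included. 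Adjoining $|B|$ parameter variables enlarges that set, so the index your argument actually delivers is $N_{m+|B|}$, and there is no reason for it to be bounded independently of $|B|$.

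In fact no uniform bound exists, so the uniformity you hope for is not merely unproved but false under the literal reading of $\preceq_{\mathcal{L}}$. Take $\tau = \{U\}$ and let $\mathfrak{A}_i$ be the $i$-element structure with $U^{\mathfrak{A}_i} = \emptyset$; this is a chain of homogeneous structures. For any $i < j$ and any embedding $f \colon \mathfrak{A}_i \rightarrow \mathfrak{A}_j$, the first-order sentence $\exists x \bigwedge_{a \in A_i} x \neq c_a$ of $\mathcal{L}^1_{\omega\omega}[\tau^*]$ is false in $\mathfrak{A}_i^*$ (every element is named) and true in $\mathfrak{A}_j^*$ (since $f[A_i]$ is a proper subset of $A_j$), so the two expansions never satisfy the same $\mathcal{L}[\tau^*]$-sentences. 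The corollary is therefore only correct --- and is only used later, e.g.\ in the proof of Theorem \ref{inter} --- in the weaker sense that the chain embedding preserves and reflects every $\mathcal{L}[\tau]$-formula $\varphi(\overline{x})$ at every parameter tuple from $A_i$; since such a $\varphi$ has at most $m$ free variables, this is immediate from Theorem \ref{finite} and needs none of the machinery of your third paragraph. You should either restrict attention to $\tau^*$-sentences obtained by substituting constants into $\mathcal{L}[\tau]$-formulas, or state the conclusion directly in terms of formulas with at most $m$ parameters.
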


In Section \ref{law} we saw that logic $\mathcal{L}^{\omega}_{\infty\omega}$ extended with finitely many em\-bed\-ding-closed quantifiers of finite width has a $0$-$1$ law
which implies undefinability of certain properties, like having even cardinality, in such a logic.
By using Corollary \ref{finite2} we can determine further properties of finite structures that are not definable in 
a logic of this kind.
In order to apply the theorem, however, we first need to know which structures are homogeneous.
This question has been studied to some extent (a survey can be found in \cite{Lachlan: 1986}).
Finite homogeneous structures have been classified completely at least in the cases
of finite graphs \cite{Gardiner: 1976}, groups \cite{Cherlin: 2000} and rings \cite{Saracino: 1988}.
In addition, it is easy to see that all unary structures are homogeneous.

\begin{example}\label{hom_graphs}
According to \cite{Lachlan: 1986}, the only finite homogeneous (undirected) graphs are
up to complement
\begin{enumerate}
\item $Pe = (\{0,1,2,3,4\},\{ (i,j)\colon |i-j|\in\{1,4 \}\})$ (pentagon),
\item $K_3 \times K_3$,
\item $I_m[K_n]$ with $m,n < \omega$,
\end{enumerate}
where $K_n$ is the complete graph of $n$ vertices and $I_m[G]$ consists of $m$ disjoint copies of $G$.

It is easy to see that $I_m[K_n] \leq I_{m'}[K_{n'}]$ if and only if $m \leq m'$ and $n \leq n'$.
Thus, for example, the graph properties "there is a clique of even cardinality" or 
"there are more cliques than there are vertices in any clique" are not definable in $\mathcal{L}^{\omega}_{\infty\omega}(\mathcal{Q})$ for any finite set $\mathcal{Q}$ of embedding-closed quantifiers of finite width.
\end{example}

\begin{example}\label{Hartig_ex}
Let $\tau = \{U,V\}$ be a vocabulary with $U$ and $V$ unary relation symbols.
The quantifier corresponding to the class 
\[
	I = \{\mathfrak{A}\in\Str[\tau]\colon |U^{\mathfrak{A}}|=|V^{\mathfrak{A}}| \}
\]
of $\tau$-structures is known as \emph{H\"artig quantifier}.
Let $\mathcal{Q}$ be a finite set of embedding-closed quantifiers of finite width.
The following is a simple observation showing that the H\"artig quantifier is not definable in $\mathcal{L}^{\omega}_{\infty\omega}(\mathcal{Q})$ even if we consider only finite structures.

For all $i<\omega$ define a $\tau$-structure $\mathfrak{A}_i$ by setting 
$A_i = \{0,\dots,i\}$ and letting $U^{\mathfrak{A}_i}$ to be the set of all even and $V^{\mathfrak{A}_i}$ of all odd numbers of $A_i$.
Then $\mathfrak{A}_i \leq\mathfrak{A}_{i+1}$ for all $i<\omega$, and 
$|U^{\mathfrak{A}_i}|=|V^{\mathfrak{A}_i}|$ if and only if $i$ is odd.
Thus, it follows from Corollary \ref{finite2} that the class
\[
	\{\mathfrak{A} \in \Str[\tau] \colon \mathfrak{A} \text{ is finite and }|U^{\mathfrak{A}}| = |V^{\mathfrak{A}}| \}
\]
is not definable in the logic $\mathcal{L}^{\omega}_{\infty \omega}(\mathcal{Q})$.
In the next subsection we will show that in fact the H\"artig quantifier is not definable in the logic $\mathcal{L}_{\infty\omega}(\mathcal{Q}_{\emb})$ as well.
\end{example}

As we saw in Example \ref{hom_graphs}, there are only few homogeneous finite graphs,
so we cannot usually directly apply Corollary \ref{finite2} in studying definability
of graph properties.
The following theorem shows that this situation can be remedied to some extent by using \emph{interpretations}.

\begin{theorem}\label{inter}
Let $\tau$ and $\sigma$ be finite relational vocabularies and suppose $\mathcal{Q}$ is a finite set of embedding-closed quantifiers of finite width.
Let $m < \omega$, and write $\mathcal{L} = \mathcal{L}^m_{\infty\omega}(\mathcal{Q})$. 
Suppose $(\mathfrak{A}_i)_{i<\omega}$ is a chain of quasi-homogeneous $\tau$-structures and 
$(\Psi,(\psi_R)_{R\in\sigma})$ is an $\mathcal{L}$-interpretation of $\sigma$ in $\tau$.
There is a natural number $N_m$ such that for all $j \geq i \geq N_m$ we have $\Psi(\mathfrak{A}_i) \preceq_{\mathcal{L}} \Psi(\mathfrak{A}_j)$.
\end{theorem}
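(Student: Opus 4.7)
The plan is to reduce Theorem \ref{inter} to Corollary \ref{finite2} by translating any sentence about $\Psi(\mathfrak{A}_k)$ back to a sentence about $\mathfrak{A}_k$ via Lemma \ref{interpr}. Two concerns arise with this naive approach. First, the definition of $\preceq_{\mathcal{L}}$ presupposes an embedding $\Psi(\mathfrak{A}_i) \hookrightarrow \Psi(\mathfrak{A}_j)$, which is not automatically induced by the given chain embedding $\mathfrak{A}_i \hookrightarrow \mathfrak{A}_j$ when the defining formulas $\psi_R$ contain quantifiers. Second, if those $\psi_R$ have many bound variables, the translated sentence $\varphi^*$ of an $\mathcal{L}^m_{\infty\omega}(\mathcal{Q})[\sigma]$-sentence $\varphi$ could use more than $m$ variables and thus fall outside the scope of Corollary \ref{finite2}.

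I would resolve both issues at once by first applying Theorem \ref{finite} to the finitely many defining formulas $\psi_R$, $R \in \sigma$, of the interpretation: this yields an index $N^1 < \omega$ and quantifier-free $\tau$-formulas $\vartheta_R$ with $\mathfrak{A}_i \vDash \forall \overline{x}(\psi_R \leftrightarrow \vartheta_R)$ for every $i \geq N^1$. Above this index the interpretation $\Psi$ is induced, on the structures $\mathfrak{A}_i$, by the quantifier-free formulas $\vartheta_R$; hence every embedding in the chain $\mathfrak{A}_i \hookrightarrow \mathfrak{A}_j$ is also an embedding $\Psi(\mathfrak{A}_i) \hookrightarrow \Psi(\mathfrak{A}_j)$, since quantifier-free formulas are preserved by embeddings. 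Moreover, replacing each atomic subformula $R(\overline{x})$ by $\vartheta_R(\overline{x})$ introduces no new bound variables, so the resulting translation sends $\mathcal{L}^m_{\infty\omega}(\mathcal{Q})[\sigma]$ into $\mathcal{L}^m_{\infty\omega}(\mathcal{Q})[\tau]$.

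To conclude, let $N^2$ be the number furnished by Corollary \ref{finite2} applied to $(\mathfrak{A}_i)_{i<\omega}$ for the parameter $m$, and put $N_m := \max(N^1, N^2)$. Fix $j \geq i \geq N_m$ together with a chain embedding $f \colon \mathfrak{A}_i \hookrightarrow \mathfrak{A}_j$, and expand both vocabularies by constants $c_a$ ($a \in A_i$), interpreting $c_a$ as $a$ in $\mathfrak{A}_i^*$ and $\Psi(\mathfrak{A}_i)^*$, and as $f(a)$ in $\mathfrak{A}_j^*$ and $\Psi(\mathfrak{A}_j)^*$; the translation above extends to the enriched vocabularies in the obvious way. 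For any sentence $\varphi \in \mathcal{L}[\sigma^*]$ and its translation $\varphi^* \in \mathcal{L}[\tau^*]$, Lemma \ref{interpr} gives $\Psi(\mathfrak{A}_k)^* \vDash \varphi \Leftrightarrow \mathfrak{A}_k^* \vDash \varphi^*$ for $k \in \{i, j\}$, while Corollary \ref{finite2} yields $\mathfrak{A}_i^* \vDash \varphi^* \Leftrightarrow \mathfrak{A}_j^* \vDash \varphi^*$; combining these gives $\Psi(\mathfrak{A}_i)^* \vDash \varphi \Leftrightarrow \Psi(\mathfrak{A}_j)^* \vDash \varphi$, as required. The main obstacle is the variable-count issue, which is neutralized by the preliminary pass through Theorem \ref{finite} applied to the defining formulas $\psi_R$ themselves; the remainder is essentially bookkeeping.
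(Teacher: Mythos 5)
Your proof is correct, and its core reduction --- translate $\sigma$-sentences to $\tau$-sentences and invoke Corollary \ref{finite2} via Lemma \ref{interpr} --- is exactly the paper's. The difference is your preprocessing step: the paper applies Lemma \ref{interpr} directly to the given interpretation $(\Psi,(\psi_R)_{R\in\sigma})$ and asserts without comment that the translated formula $\varphi^*$ lies in $\mathcal{L}^m_{\infty\omega}(\mathcal{Q})[\tau]$, whereas you first run Theorem \ref{finite} on the finitely many defining formulas $\psi_R$ to replace them, above some index $N^1$, by quantifier-free formulas $\vartheta_R$. This extra pass genuinely buys something: substituting a $\psi_R$ containing bound variables for an atomic $R(\overline{x})$ can push the translated sentence outside the $m$-variable fragment, so the paper's appeal to Corollary \ref{finite2} at level $m$ is, as written, slightly too quick (it can be repaired without your preprocessing by applying the corollary at a larger $m'$ depending only on $m$ and the variables occurring in the $\psi_R$). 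Your version also makes explicit why embeddings $\Psi(\mathfrak{A}_i)\hookrightarrow\Psi(\mathfrak{A}_j)$ exist at all, which is needed for $\Psi(\mathfrak{A}_i)\preceq_{\mathcal{L}}\Psi(\mathfrak{A}_j)$ to be meaningful; the paper leaves this implicit (it does follow even without quantifier elimination, since the embedding witnessing $\mathfrak{A}_i\preceq_{\mathcal{L}}\mathfrak{A}_j$ preserves and reflects each $\psi_R$ with parameters from $A_i$). In short: same skeleton, but your argument is more careful and closes two small gaps that the published proof glosses over.
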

\begin{proof}
By Corollary \ref{finite2}, there is a natural number $N'_m$ such that $\mathfrak{A}_i \preceq_{\mathcal{L}} \mathfrak{A}_j$ for all $j\geq i \geq N'_m$.
We claim that $N'_m$ is a wanted number $N_m$.
To show that, let $\varphi \in \mathcal{L}[\sigma]$.
By Lemma \ref{interpr}, there is a formula $\varphi^* \in \mathcal{L}[\tau]$ such that
\[
	\mathfrak{A}_i, \overline{a} \vDash \varphi^* \Leftrightarrow \Psi(\mathfrak{A}_i), \overline{a} \vDash \varphi
\]
for all $i<\omega$ and tuples $\overline{a}$ of elements of $A$.
Let $j \geq i \geq N'_m$ and suppose that $\Psi(\mathfrak{A}_i),\overline{a} \vDash \varphi$.
Then we have $\mathfrak{A}_i,\overline{a} \vDash \varphi^*$, so $\mathfrak{A}_j, \overline{a} \vDash \varphi^*$ from which follows $\Psi(\mathfrak{A}_j), \overline{a} \vDash \varphi$. The claim is thus proved. 
\end{proof}

\begin{example}
A graph $G$ is \emph{regular} if every vertex of $G$ has the same number of neighbours.
We will show that regularity of a graph is not definable in the logic $\mathcal{L} = \mathcal{L}^{\omega}_{\infty \omega}(Q_1,\dots,Q_n)$ where all $Q_i \in \mathcal{Q}_{\emb}$ have finite width.
Let $\tau$ be the same vocabulary and $(\mathfrak{A}_i)_{i<\omega}$ a chain of $\tau$-structures as in Example \ref{Hartig_ex}.
Let $\sigma = \{E\}$ where $E$ is a relation symbol and suppose
$(\Psi,\psi)$ is an interpretaion of $\sigma$ in $\tau$ where
\[
	\psi(x,y) := (U(x)\land U(y)) \lor (V(x) \land V(y)).
\]
Then for all $k<\omega$,
\[
	\Psi(\mathfrak{A}_k) = \begin{cases}
	K_{\frac{k+1}{2}} + K_{\frac{k+1}{2}} \text{ if } k \text{ is odd} \\
	K_{\frac{k}{2}} + K_{\frac{k}{2}+1} \text{ if } k \text{ is even},
	\end{cases}
\]
where every $K_m$ is the complete graph on $m$ vertices and $+$ means disjoint union.
Thus, a graph $\Psi(\mathfrak{A}_k)$ is regular if and only if $k$ is odd, 
so by Theorem \ref{inter} regularity of graphs is not definable in $\mathcal{L}$.
\end{example}

\begin{example}\label{groups}
If we allow $\tau$ to have function symbols then Corollary \ref{finite2} does not hold.
Let $\tau$ be the vocabulary of groups.
A formula $\varphi \in \mathcal{L}_{\infty\omega}^{\omega}[\tau]$,
\begin{align*}
	\varphi(x,y) := \bigvee_{k<\omega}\big(&(x^k=1 \wedge y^k=1) \\
	&\wedge \neg \bigvee_{m<k}(x^m = 1) \wedge \neg \bigvee_{m<k}(y^m=1)\big)
\end{align*}
says that $x$ and $y$ have the same order, and a formula $\chi \in \mathcal{L}_{\infty\omega}^{\omega}[\tau]$,
\[
	\chi(x,y) := \bigvee_{k<\omega}y = x^k,
\]
says that $y$ is in the subgroup generated by $x$.
For each $n<\omega$, set
\begin{align*}
	G_{2n} &= \prod_{i\leq n} C_{p_i}^2 \\
	G_{2n+1} &= \prod_{i\leq n} C_{p_i}^2 \times C_{p_{n+1}},
\end{align*}
where $p_i$ is the $i$:th prime number and $C_{p_i}$ the cyclic group of order $p_i$.
Then $(G_n)_{n<\omega}$ is a chain of homogeneous groups, but $G_{2n} \vDash \psi$ and $G_{2n+1} \nvDash \psi$ for all $n$, where
\[
	\psi := \forall x \exists y(\neg\chi(x,y) \wedge \varphi(x,y)).
\]
\end{example}

We will need the following lemmas in the proof of Theorem \ref{log_eq}.
The proof of Lemma \ref{shrinking} follows from the main results of papers \cite{Cherlin: 1986} and \cite{Lachlan: 1984}. We will not present it here due to its complexity. 

\begin{lemma}\label{shrinking}
Let $\tau$ be a finite relational vocabulary and $\mathcal{H}$ the class of all finite homogeneous $\tau$-strucutres. Then all antichains of structures in $\mathcal{H}$ are finite, in other words, if $C \subseteq \mathcal{H}$ is infinite then there are $\mathfrak{A}$, $\mathfrak{B} \in C$ such that $\mathfrak{A} < \mathfrak{B}$.
\end{lemma}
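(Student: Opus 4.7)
The plan is to reduce the statement to a Dickson-type well-quasi-order argument via the classification of finite homogeneous relational structures due to Lachlan and Cherlin--Lachlan. The key external input, which I would take as a black box from \cite{Cherlin: 1986} and \cite{Lachlan: 1984}, is that once the finite relational vocabulary $\tau$ is fixed, the class $\mathcal{H}$ decomposes into finitely many ``families'' $\mathcal{F}_1,\ldots,\mathcal{F}_N$, and each family $\mathcal{F}_j$ admits a parameterization $\pi_j \colon \mathcal{F}_j \to \omega^{k_j}$ under which the isomorphism type of a structure in $\mathcal{F}_j$ is determined by a tuple of non-negative integer invariants (numbers of components in a canonical decomposition, dimensions of coordinatizing linear spaces over a fixed finite field, and so on). The model case to keep in mind is Example \ref{hom_graphs}: up to complementation, the only infinite family of finite homogeneous graphs is $I_m[K_n]$, parameterized by $(m,n) \in \omega^2$.

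Granting this, the argument runs as follows. Suppose for contradiction that $C \subseteq \mathcal{H}$ is an infinite antichain under embedding. Since there are only finitely many families, pigeonhole yields an infinite sub-antichain $C' \subseteq C$ sitting inside a single family $\mathcal{F}_j$; write $k := k_j$ and $\pi := \pi_j$. Next, from the explicit form of the parameterization one verifies that $\pi$ monotonically reflects embeddability: whenever $\pi(\mathfrak{A}) \leq \pi(\mathfrak{B})$ holds coordinatewise in $\omega^k$, one has $\mathfrak{A} \leq \mathfrak{B}$. (In the graph example this is precisely the observation, noted in Example \ref{hom_graphs}, that $I_m[K_n] \leq I_{m'}[K_{n'}]$ iff $m \leq m'$ and $n \leq n'$.) Finally, Dickson's lemma tells us that $(\omega^k, \leq)$ is a well-quasi-order, so the infinite set $\pi(C') \subseteq \omega^k$ must contain two coordinatewise-comparable points, producing a comparable pair in $C'$ and contradicting the antichain assumption.

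The main obstacle is not the Dickson-lemma step, which is routine, but rather the careful invocation of the classification: one must check that the parameterizations $\pi_j$ genuinely reflect embeddability and that no ``hidden'' incomparabilities are introduced by the coordinatizing geometries (affine or projective spaces over finite fields with a fixed field of scalars, equivalence relations refining equivalence relations, and their decorations by smaller homogeneous building blocks). This verification amounts to a substantial case analysis driven by the full Lachlan classification, which is why the authors---sensibly---cite \cite{Cherlin: 1986} and \cite{Lachlan: 1984} rather than reproduce the details. If one wished to avoid a direct appeal to the full classification, a fallback would be to combine Higman's lemma on finite sequences with the coordinatization structure, but this only trades one use of the classification for a different one and does not shorten the overall argument.
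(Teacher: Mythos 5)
Your proposal is correct and follows essentially the route the paper intends: the paper gives no proof at all, stating only that the lemma ``follows from the main results of \cite{Cherlin: 1986} and \cite{Lachlan: 1984},'' and those main results are precisely the decomposition of $\mathcal{H}$ into finitely many dimension-parameterized families in which coordinatewise dominance of parameters yields embeddings. Your pigeonhole-plus-Dickson derivation is the standard way to extract the well-quasi-order (hence no infinite antichain) statement from that classification, so there is nothing to fault beyond the unavoidable reliance on the classification as a black box.
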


\begin{lemma}\label{substr}
Let $\tau$ be a finite relational vocabulary and $\mathfrak{A}_0, \dots, \mathfrak{A}_{k-1}$ finite $\tau$-structures.
There is a sentence $\varphi \in \mathcal{L}_{\omega\omega}[\tau]$ such that for all $\mathfrak{B} \in \Str[\tau]$,
\[
	\mathfrak{B} \vDash \varphi \Leftrightarrow \mathfrak{A}_i \leq \mathfrak{B} \text{ for some } i < k.
\]
\end{lemma}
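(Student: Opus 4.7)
The plan is to turn each $\mathfrak{A}_i$ into a first-order existential sentence that exactly captures its embeddability, and then take a finite disjunction. Fix $i<k$ and enumerate the universe without repetition as $A_i=\{a^{(i)}_0,\dots,a^{(i)}_{n_i-1}\}$. Since $\tau$ is finite and relational and $n_i$ is finite, there are only finitely many atomic $\tau$-formulas in the variables $x_0,\dots,x_{n_i-1}$, so the atomic type realized by $(a^{(i)}_0,\dots,a^{(i)}_{n_i-1})$ in $\mathfrak{A}_i$ can be written as a finite conjunction $t_i(x_0,\dots,x_{n_i-1})$ of literals. Note that the inequalities $x_j\ne x_l$ for $j\ne l$ are automatically among the conjuncts of $t_i$, because the $a^{(i)}_j$ are pairwise distinct.

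Set $\varphi_i := \exists x_0\cdots\exists x_{n_i-1}\,t_i(x_0,\dots,x_{n_i-1})$ and $\varphi := \bigvee_{i<k}\varphi_i$. Both are sentences of $\mathcal{L}_{\omega\omega}[\tau]$. It then suffices to verify, for each fixed $i$, the equivalence
\[
	\mathfrak{B}\vDash\varphi_i \;\Longleftrightarrow\; \mathfrak{A}_i\leq\mathfrak{B}.
\]
For the forward direction, if $\mathfrak{B},(b_0,\dots,b_{n_i-1})\vDash t_i$ then $(b_0,\dots,b_{n_i-1})$ realizes in $\mathfrak{B}$ the same atomic type as $(a^{(i)}_0,\dots,a^{(i)}_{n_i-1})$ in $\mathfrak{A}_i$; in particular the $b_j$ are pairwise distinct, so the map $a^{(i)}_j\mapsto b_j$ is a well-defined injection preserving and reflecting every relation of $\tau$, i.e., an embedding of $\mathfrak{A}_i$ into $\mathfrak{B}$. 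For the converse, if $f\colon\mathfrak{A}_i\to\mathfrak{B}$ is an embedding, then $(f(a^{(i)}_0),\dots,f(a^{(i)}_{n_i-1}))$ realizes $t_i$ in $\mathfrak{B}$, witnessing $\varphi_i$.

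Finally, $\mathfrak{B}\vDash\varphi$ iff $\mathfrak{B}\vDash\varphi_i$ for some $i<k$, which by the above is equivalent to $\mathfrak{A}_i\leq\mathfrak{B}$ for some $i<k$. There is no real obstacle here; the only point worth flagging is that the construction relies on $\tau$ being finite and relational, which keeps each $t_i$ a first-order formula (a finite conjunction of literals). Had $\tau$ contained function symbols, one would additionally need to axiomatize closure of the image under those functions, but that is not needed in the present setting.
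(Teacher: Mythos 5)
Your proof is correct and follows essentially the same route as the paper: describe each finite $\mathfrak{A}_i$ by a first-order sentence and take the finite disjunction over $i<k$. In fact your version is the more careful one --- the paper's one-line proof speaks of a sentence describing $\mathfrak{A}_i$ \emph{up to isomorphism}, which taken literally would characterize $\mathfrak{B}\cong\mathfrak{A}_i$ rather than $\mathfrak{A}_i\leq\mathfrak{B}$, whereas the existential diagram sentence $\exists x_0\cdots\exists x_{n_i-1}\,t_i(x_0,\dots,x_{n_i-1})$ that you write down (with the inequalities $x_j\ne x_l$ included among the literals) is exactly what is needed.
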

\begin{proof}
Every finite $\tau$-structure can be described up to isomorphism by a sentence in $\mathcal{L}_{\omega\omega}[\tau]$. Thus, if $\psi_i \in \mathcal{L}_{\omega\omega}[\tau]$ describes $\mathfrak{A}_i$ then $\bigvee_{i<k}\psi_i$ is the desired sentence.
\end{proof}

\begin{theorem}\label{log_eq}
Let $\tau$ be a finite relational vocabulary and $\mathcal{H}$ the class of all finite homogeneous $\tau$-structures. Suppose that $\mathcal{Q}$ is a finite set of embedding-closed quantifiers of finite width. Then $\mathcal{L}_{\infty\omega}^{\omega}(\mathcal{Q}) \equiv \mathcal{L}_{\omega\omega}$ over $\mathcal{H}$.
\end{theorem}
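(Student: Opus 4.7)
Since $\mathcal{L}_{\omega\omega}\le \mathcal{L}_{\infty\omega}^\omega(\mathcal{Q})$ is immediate, I only need to show that every sentence $\varphi\in \mathcal{L}_{\infty\omega}^\omega(\mathcal{Q})$ is equivalent over $\mathcal{H}$ to a first-order sentence. Fix such a $\varphi$, let $m<\omega$ be the number of variables it uses, and write $\mathcal{L}=\mathcal{L}_{\infty\omega}^m(\mathcal{Q})$ and $K=\{\mathfrak{A}\in \mathcal{H} : \mathfrak{A}\vDash\varphi\}$. My strategy is to prove (a) that $\mathcal{H}$ carries only finitely many $\equiv_{\mathcal{L}}$-equivalence classes, and (b) that each such class is first-order definable on $\mathcal{H}$; combining (a) and (b) makes $K$ a finite union of first-order-definable subclasses of $\mathcal{H}$, and hence itself first-order.

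For (a), I argue by contradiction: suppose $(\mathfrak{B}_n)_{n<\omega}$ is an infinite pairwise $\mathcal{L}$-inequivalent (and therefore pairwise non-isomorphic) sequence in $\mathcal{H}$. Lemma~\ref{shrinking} says $(\mathcal{H},\le)$ is a well-quasi-order, so the standard WQO extraction yields an infinite $\le$-chain inside the sequence --- strictly increasing, since two non-isomorphic finite structures cannot mutually embed. Corollary~\ref{finite2} applied to this chain of homogeneous (hence quasi-homogeneous) structures then produces an $N$ for which $\mathfrak{B}_{i_k}\equiv_{\mathcal{L}}\mathfrak{B}_{i_{k'}}$ whenever $k,k'\ge N$, contradicting pairwise inequivalence. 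Hence $\mathcal{H}$ splits into finitely many classes $C_1,\ldots,C_l$, and $K$ is a union of some of them.

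For (b), fix one class $C=C_j$. Lemma~\ref{shrinking} gives that $\min(C)$ and $\min(\mathcal{H}\setminus C)$ are finite antichains, and by Lemma~\ref{substr} each such finite family of finite $\tau$-structures is captured by a first-order ``some member embeds'' sentence. Moreover any two non-isomorphic members of $\mathcal{H}$ differ in their ages: if $\operatorname{age}(\mathfrak{A})=\operatorname{age}(\mathfrak{B})$ for finite $\mathfrak{A},\mathfrak{B}$ then each lies in the other's age, whence $\mathfrak{A}\le\mathfrak{B}\le\mathfrak{A}$, and mutual embedding of finite structures forces isomorphism. So for each pair of distinct classes a finite list of witnessing substructures should suffice to distinguish them by a first-order sentence, and a Boolean combination of the resulting sentences across the (finite) partition cuts out $C$ inside $\mathcal{H}$.

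The hard part is (b): turning the WQO-produced finite antichain data together with the age characterisation into a \emph{single} Boolean combination of ``$\mathfrak{D}\le\cdot$''-type sentences that picks out each class uniformly, rather than separating iso types pairwise. Individual iso types are already first-order distinguishable through their differing ages (via Lemma~\ref{substr}), but uniform class-level separation depends on combining the finiteness of the partition from (a) with the finite antichains from the WQO. Once this is established, $K$ is a finite union of first-order-definable subclasses of $\mathcal{H}$, which completes the proof.
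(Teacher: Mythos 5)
Your step (a) is sound: Lemma \ref{shrinking}, together with the trivial well-foundedness of embeddability on finite structures, makes $(\mathcal{H},\leq)$ a well-quasi-order, the extracted subsequence is a strictly increasing chain of (quasi-)homogeneous structures, and Corollary \ref{finite2} then contradicts pairwise $\mathcal{L}$-inequivalence. The genuine gap is step (b), and it is not a technicality that can be deferred: since there are only finitely many classes, each $C_j$ is already defined over $\mathcal{H}$ by a finite conjunction of $\mathcal{L}$-sentences separating it from the other classes, so ``each class is first-order definable over $\mathcal{H}$'' is literally an instance of the theorem you are proving --- your decomposition has not reduced the difficulty. Moreover, the two tools you propose for (b) do not combine into a proof. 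Knowing that $\min(C)$ and $\min(\mathcal{H}\setminus C)$ are finite antichains only determines the \emph{upward closures} of these classes, and the classes are not upward closed (already a first-order cardinality bound such as $\neg\exists x\exists y\,(x\neq y)$ cuts out a non-upward-closed subclass of $\mathcal{H}$), so lying above a member of $\min(C)$ does not imply membership in $C$. And the age argument separates any two fixed isomorphism types by a sentence of the form ``$\mathfrak{D}\leq\cdot$'', but $C$ and $\mathcal{H}\setminus C$ each contain infinitely many isomorphism types in general, and pointwise separation of one class from each member of the other does not yield a single sentence separating two infinite classes; that uniformity is exactly what is missing.

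The paper obtains the uniformity by a different device, applied directly to the given sentence $\varphi$ rather than to equivalence classes. Call $\mathfrak{A}\in\mathcal{H}$ \emph{stabilizing} if $\varphi$ has constant truth value on $\{\mathfrak{B}\in\mathcal{H}:\mathfrak{A}\leq\mathfrak{B}\}$. Theorem \ref{finite} (eventual quantifier elimination along chains) forces a stabilizing structure to exist above every member of $\mathcal{H}$, and Lemma \ref{shrinking} lets one extract a \emph{finite} antichain $\mathcal{C}$ of stabilizing structures with which every member of $\mathcal{H}$ is comparable. On structures lying above some member of $\mathcal{C}$ the truth of $\varphi$ is decided by which members of $\mathcal{C}$ embed, a first-order condition by Lemma \ref{substr}; the remaining structures are proper substructures of members of $\mathcal{C}$, hence of finitely many isomorphism types, each axiomatizable by a single first-order sentence. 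Stabilization is precisely what converts $\Mod(\varphi)\cap\mathcal{H}$ into an upward-closed class plus a finite exceptional set; you need this device, or an equivalent one, to close step (b).
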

\begin{proof}
Let $\varphi \in \mathcal{L}^{\omega}_{\infty\omega}(\mathcal{Q})[\tau]$.
We say that $\mathfrak{A} \in \mathcal{H}$ \emph{stabilizes $\varphi$ in $\mathcal{H}$} if for all $\mathfrak{B} \in \mathcal{H}$, $\mathfrak{A} \leq \mathfrak{B}$ implies $\mathfrak{A} \vDash \varphi \Leftrightarrow \mathfrak{B} \vDash \varphi$.
Clearly, there is a structure $\mathfrak{A}_0 \in \mathcal{H}$ that stabilizes $\varphi$ in $\mathcal{H}$
since otherwise we would be able to construct an infinite chain of structures of $\mathcal{H}$ which contradicts Theorem \ref{finite}.
Similarly, if the class of finite $\tau$-structures incomparable with $\mathfrak{A}_0$ is not empty then we can find a structure $\mathfrak{A}_1 \in \mathcal{H}$ incomparable with $\mathfrak{A}_0$ that stabilizes $\varphi$ in $\mathcal{H}$.
Continuing in the same way we can construct an antichain $\mathcal{C} \subseteq \mathcal{H}$ of finite structures such that every structure $\mathfrak{B} \in \mathcal{H}$ is comparable with some structure $\mathfrak{A} \in \mathcal{C}$ 
and every structure in $C$ stabilizes $\varphi$.
By Lemma \ref{shrinking}, $\mathcal{C}$ is finite so by Lemma \ref{substr}, $\varphi$ is equivalent to a sentence in $\mathcal{L}_{\omega\omega}[\tau]$ over the structures in $\mathcal{H}$.
\end{proof}

In particular, if $\tau$ is a finite unary vocabulary and $\mathcal{Q}$ is a finite set of embedding-closed quantifiers of finite width then $\mathcal{L}_{\infty\omega}^{\omega}(\mathcal{Q}) \equiv \mathcal{L}_{\omega\omega}$ over finite $\tau$-structures.

\subsection{The infinite case}

It is possible to generalize Theorem \ref{finite} to vocabularies and sets of embedding-closed quantifiers of arbitrary cardinality.

\begin{theorem}\label{infinite}
Let $\tau$ be a vocabulary, $\kappa$ a cardinal, $\mathcal{Q}$ a set of embedding-closed quantifiers of width less than $\kappa$, and $\lambda$ a regular cardinal such that 
\[
	2^{|\tau|\cdot\aleph_0} \cdot |\mathcal{Q}| \cdot \kappa < \lambda.
\]
Suppose $(\mathfrak{A}_{\alpha})_{\alpha<\lambda}$ is a chain of quasi-homogeneous $\tau$-structures.
There is a cardinal $\mu<\lambda$ such that for every formula $\varphi \in \mathcal{L}_{\infty\omega}(\mathcal{Q})$ there is a quantifier-free $\tau$-formula $\vartheta_{\varphi}$ such that 
\[
	\mathfrak{A}_{\alpha} \vDash \forall \overline{x}(\varphi \leftrightarrow \vartheta_{\varphi})
\]
for all $\mu \leq \alpha$. 
In particular, $\mathfrak{A}_{\alpha} \preceq_{\mathcal{L}_{\infty\omega}(\mathcal{Q})} \mathfrak{A}_{\beta}$ for all $\mu \leq \alpha \leq \beta < \lambda$. 
\end{theorem}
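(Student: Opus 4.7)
The plan is to mirror the proof of Theorem \ref{finite}, extending Lemma \ref{quant_elim_finite_lemma} transfinitely and then inducting on formula structure.

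First, I would generalize Lemma \ref{quant_elim_finite_lemma} to the infinite setting. For each primitive formula $\varphi = Q(\overline{x}_i\psi_i)_{i<n}$ with $Q\in\mathcal{Q}$ of width $n<\kappa$ and the $\psi_i$ quantifier-free, set $T^\alpha_\varphi = \{t : t \text{ is an atomic type and } (\mathfrak{A}_\alpha,\overline{a})\vDash\varphi\wedge t \text{ for some }\overline{a}\}$. By Lemma \ref{preservation}, $(T^\alpha_\varphi)_{\alpha<\lambda}$ is a monotonically increasing chain whose values lie in the set of atomic $k$-types of $\tau$ for $k=\frvar(\varphi)$, a set of cardinality at most $2^{|\tau|\cdot\aleph_0}$. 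Since $\lambda$ is regular with $\lambda > 2^{|\tau|\cdot\aleph_0}$, the chain stabilizes at some ordinal $\mu_\varphi < \lambda$, and the quantifier-free formula $\bigvee T^{\mu_\varphi}_\varphi$ is then equivalent to $\varphi$ on $\mathfrak{A}_\alpha$ for every $\alpha\geq\mu_\varphi$ by Theorem \ref{homog}.

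Second, I would show that the supremum $\mu := \sup_\varphi\mu_\varphi$ of these stabilization times (over primitive $\varphi$) is $<\lambda$. Using Lemma \ref{quant}, each $\psi_i$ can be taken to be a disjunction of atomic types; quasi-homogeneity and Theorem \ref{homog} then force the semantics of the primitive $\varphi$ on the chain to be determined by at most $|\mathcal{Q}|\cdot\kappa\cdot 2^{|\tau|\cdot\aleph_0}$ many independent stabilization events, indexed by triples $(Q,i,t)$ where $Q\in\mathcal{Q}$, $i<\kappa$ is a slot position, and $t$ is an atomic type. By the cardinal hypothesis $2^{|\tau|\cdot\aleph_0}\cdot|\mathcal{Q}|\cdot\kappa<\lambda$ and regularity of $\lambda$, this bounds $\mu$ strictly below $\lambda$.

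Third, I induct on the structure of $\varphi\in\mathcal{L}_{\infty\omega}(\mathcal{Q})$ just as in Theorem \ref{finite}. The atomic, negation, and infinitary conjunction cases preserve quantifier-freeness. For the quantifier case $\varphi = Q(\overline{x}_i\alpha_i)_{i<n}$, the inductive hypothesis furnishes quantifier-free $\vartheta_{\alpha_i}$ with $\mathfrak{A}_\alpha\vDash\forall\overline{x}(\alpha_i\leftrightarrow\vartheta_{\alpha_i})$ for every $\alpha\geq\mu$; then $\varphi$ is equivalent to the primitive $Q(\overline{x}_i\vartheta_{\alpha_i})_{i<n}$ on those structures, whose quantifier-free equivalent is supplied by Step 1 and the choice of $\mu$. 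The final "in particular" statement then falls out: for $\mu\leq\alpha\leq\beta<\lambda$ and a sentence $\chi\in\mathcal{L}_{\infty\omega}(\mathcal{Q})[\tau^*]$, write $\chi=\varphi'(\overline{c})$ with $\varphi'\in\mathcal{L}_{\infty\omega}(\mathcal{Q})[\tau]$; applying the main claim to $\varphi'$ yields a quantifier-free $\vartheta_{\varphi'}$ that is preserved by the embedding $\mathfrak{A}_\alpha\leq\mathfrak{A}_\beta$, which gives $\mathfrak{A}_\alpha^*\vDash\chi \Leftrightarrow \mathfrak{A}_\beta^*\vDash\chi$.

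The main obstacle is the counting in Step 2: a naive enumeration of canonical primitives with arbitrary quantifier-free slot fillings would yield roughly $(2^{|\tau|\cdot\aleph_0})^\kappa$ candidates, exceeding the hypothesized bound. The argument has to exploit the fact that what really needs to be stabilized are single-slot atomic-type events, so that the $<\kappa$ positions of each quantifier contribute additively (as $\kappa$) rather than multiplicatively.
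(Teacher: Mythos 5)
Your Steps 1 and 3 coincide with the paper's proof: the paper likewise generalizes Lemma \ref{quant_elim_finite_lemma} by noting that the sets $T_\alpha$ of activated atomic types form an increasing chain in a power set of size at most $2^{|\tau|\cdot\aleph_0}$, hence stabilize below the regular $\lambda$ at some $\kappa_\varphi$, and then runs the same structural induction. The divergence, and the genuine gap, is your Step 2. The paper simply takes $\Phi$ to be the set of all primitive formulas $Q(\overline{x}_i\vartheta_i)_{i<\delta}$ with quantifier-free slots, asserts $|\Phi|\le 2^{|\tau|\cdot\aleph_0}\cdot|\mathcal{Q}|\cdot\kappa$, and sets $\mu=\sup\{\kappa_\varphi:\varphi\in\Phi\}$. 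You correctly observe that a literal count of slot fillings does not yield this bound, but your proposed repair --- that the stabilization of a primitive formula decomposes into ``independent stabilization events'' indexed by triples $(Q,i,t)$, so that the slots contribute additively --- is asserted, not proved, and I do not see how to prove it. Whether $Q(\overline{x}_i\vartheta_i)_{i<\delta}$ holds at a tuple $\overline{a}$ of atomic type $t$ in $\mathfrak{A}_\alpha$ is the question of whether the single structure $(A_\alpha,(\vartheta_i^{\mathfrak{A}_\alpha,\overline{a}})_{i<\delta})$ belongs to $K_Q$; this is a joint condition on all $\delta$ slots simultaneously, and membership in an arbitrary embedding-closed class is not a Boolean combination of conditions each depending on one slot and one type. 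So the ordinal at which $\varphi$ activates at type $t$ is a function of the whole tuple $(S_i)_{i<\delta}$ of type-sets filling the slots, and nothing in your argument shows that only $|\mathcal{Q}|\cdot\kappa\cdot 2^{|\tau|\cdot\aleph_0}$ many distinct activation ordinals can occur.

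To be fair, you have put your finger on exactly the point where the paper's own proof is thinnest: the inequality $|\Phi|\le 2^{|\tau|\cdot\aleph_0}\cdot|\mathcal{Q}|\cdot\kappa$ is stated there without justification, and counting slot fillings up to logical equivalence gives something like $|\mathcal{Q}|\cdot\bigl(2^{2^{|\tau|\cdot\aleph_0}}\bigr)^{\kappa}$. What your write-up needs is either an actual proof of the additive decomposition (which I doubt exists in this generality) or a different argument showing that the set of activation ordinals $\{\kappa_\varphi:\varphi\in\Phi\}$ has cardinality $<\lambda$ even though $\Phi$ itself may be large; as it stands, the key cardinality estimate on which the choice of $\mu$ rests is missing.
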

\begin{proof}
Let $Q \in \mathcal{Q}_{\emb}$ and 
$\varphi = Q(\overline{x}_i\vartheta_i)_{i<\delta}$ where all $\vartheta_i$ are quantifier-free $\tau$-formulas.
In the first part of this proof, we will generalize the proof of Lemma \ref{quant_elim_finite_lemma} to apply for chains of quasi-homogeneous structures of arbitrary length.
First we observe that for all $\alpha< \lambda$ there is the smallest set $T_{\alpha}$ of atomic types of $\tau$ such that $\mathfrak{A}_{\alpha} \vDash \forall\overline{x}(\varphi \leftrightarrow \bigvee T_{\alpha})$ because of quasi-homogeneity of $\mathfrak{A}_{\alpha}$. 
In addition, if $\alpha < \beta < \lambda$ then $T_{\alpha} \subseteq T_{\beta}$.
Thus, since there are at most $2^{|\tau|+\aleph_0}$ atomic types of $\tau$, if $\lambda$ is a regular cardinal greater than $2^{|\tau|+\aleph_0}$
and $(\mathfrak{A}_{\alpha})_{\alpha<\lambda}$ is a chain of quasi-homogeneous $\tau$-structures
then there is a cardinal $\kappa_{\varphi}<\lambda$ and a quantifier-free $\tau$-formula $\vartheta_{\varphi}$ that is equivalent to $\varphi$ in structures $\mathfrak{A}_{\alpha}$ with $\alpha\geq\kappa_{\varphi}$.

Let $\Phi$ be the set of all formulas of the form $Q(\overline{x}_i\vartheta_i)_{i<\delta}$,
with $Q\in\mathcal{Q}$ and all $\vartheta_i$ quantifier-free.
Then $|\Phi| \leq 2^{|\tau|\cdot\aleph_0}\cdot |\mathcal{Q}| \cdot \kappa$.
We claim that we can set $\mu := \sup\{\kappa_{\varphi} \colon \varphi \in \Phi \}$.
We use induction on the structure of the formula to prove this claim.
If $\varphi$ is atomic there is nothing to prove. If $\varphi = \bigwedge_{i\in I} \varphi_i$ and the claim holds for all $\varphi_i$ then $\vartheta_{\varphi} = \bigwedge_{i\in I}\vartheta_{\varphi_i}$. If $\varphi = \neg \varphi'$ and the claim is true for $\varphi'$ then $\vartheta_{\varphi} = \neg \vartheta_{\varphi'}$.
Finally, suppose that $\varphi = Q(\overline{x}_i\varphi_i)_{i<\delta}$ and the claim holds for all $\varphi_i$. Then each $\varphi_i$ is equivalent to a quantifier-free formula $\vartheta_i$ on structures $\mathfrak{A}_{\alpha}$ with $\alpha \geq \mu$ so by the first part of the proof, $\varphi$ is also equivalent to some quantifier-free formula $\vartheta_{\varphi}$ on these structures.
\end{proof}

\begin{example}\label{Hartig}
It follows directly from Theorem \ref{infinite} that
the H\"artig quantifier, introduced in Example \ref{Hartig_ex}, is not definable in $\mathcal{L}_{\infty\omega}(\mathcal{Q}_{\emb})$.
\end{example}

\clearpage

\begin{lemma}
If $\mathfrak{A}$ and $\mathfrak{B}$ are bi-embeddable quasi-homogeneous $\tau$-structures then $\mathfrak{A} \equiv_{\emb} \mathfrak{B}$.
\end{lemma}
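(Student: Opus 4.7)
The plan is to prove, by induction on the structure of $\varphi \in \mathcal{L}_{\infty\omega}(\mathcal{Q}_{\emb})$, a stronger statement about tuples. Fixing embeddings $f\colon \mathfrak{A}\to\mathfrak{B}$ and $g\colon \mathfrak{B}\to\mathfrak{A}$, I would show that for every $\varphi$ and every tuple $\overline{a}$ in $A$ we have $\mathfrak{A}\vDash\varphi(\overline{a}) \Leftrightarrow \mathfrak{B}\vDash\varphi(f\overline{a})$, together with the symmetric statement obtained by interchanging the roles of $(\mathfrak{A},f)$ and $(\mathfrak{B},g)$. Taking $\overline{a}$ to be the empty tuple then yields $\mathfrak{A}\equiv_{\emb}\mathfrak{B}$.

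The atomic and Boolean cases are routine, since embeddings preserve and reflect atomic formulas and Boolean combinations commute with the biconditional. The interesting case is $\varphi = Q(\overline{x}_i\alpha_i)_{i<\kappa}$ with $Q\in\mathcal{Q}_{\emb}$. Here the forward implication proceeds by observing that, upon applying the induction hypothesis pointwise to each $\alpha_i$, the map $f$ is an embedding of the derived structure $(A,(\alpha_i^{\mathfrak{A},\overline{a}})_{i<\kappa})$ into $(B,(\alpha_i^{\mathfrak{B},f\overline{a}})_{i<\kappa})$. Since $Q$ is embedding-closed, $\mathfrak{A}\vDash\varphi(\overline{a})$ gives $\mathfrak{B}\vDash\varphi(f\overline{a})$ at once; this is essentially the inductive generalization of Lemma~\ref{preservation}.

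The converse is the step that requires quasi-homogeneity. Assuming $\mathfrak{B}\vDash\varphi(f\overline{a})$, the same argument applied in the other direction with $g$ in place of $f$ yields $\mathfrak{A}\vDash\varphi(gf\overline{a})$. But $g\circ f$ is an embedding of $\mathfrak{A}$ into itself, so the tuples $\overline{a}$ and $gf\overline{a}$ realize the same atomic type in $\mathfrak{A}$. By Theorem~\ref{homog}, $\mathfrak{A}$ has quantifier elimination for $\mathcal{L}_{\infty\omega}(\mathcal{Q}_{\emb})$, and therefore the truth of $\varphi$ at a tuple depends only on its atomic type. Hence $\mathfrak{A}\vDash\varphi(\overline{a})$, completing the inductive step.

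The main obstacle is the quantifier case, which must braid together three ingredients simultaneously: the induction hypothesis for each $\alpha_i$ (to upgrade $f$ to an embedding of the derived structures), the embedding-closure of $Q$ (to transfer membership in $K_Q$ forward under $f$), and the quantifier elimination supplied by Theorem~\ref{homog} (to pull truth back from $gf\overline{a}$ to $\overline{a}$). Bi-embeddability alone would not suffice for the converse direction without quantifier elimination, and conversely quantifier elimination alone would not handle the forward direction without $Q$ being embedding-closed; both hypotheses are genuinely required at the same place in the proof.
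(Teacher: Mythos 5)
Your proof is correct, but it takes a genuinely different route from the paper's. The paper's argument is a two-line reduction to Theorem~\ref{infinite}: build a transfinite chain in which $\mathfrak{A}$ and $\mathfrak{B}$ alternate (possible by bi-embeddability), and observe that the truth value of any sentence eventually stabilizes along a sufficiently long chain of quasi-homogeneous structures, forcing $\mathfrak{A}$ and $\mathfrak{B}$ to agree. You instead give a direct induction on the structure of $\varphi$, using embedding-closure of $Q$ (the inductive form of Lemma~\ref{preservation}) for the forward transfer along $f$, and quantifier elimination from Theorem~\ref{homog} for the backward transfer: from $\mathfrak{B}\vDash\varphi(f\overline{a})$ you pass to $\mathfrak{A}\vDash\varphi(gf\overline{a})$ via $g$, and since $g\circ f$ is a self-embedding, $\overline{a}$ and $gf\overline{a}$ share an atomic type, so quantifier elimination pulls the truth back to $\overline{a}$. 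Both arguments are sound, and each has something to recommend it. The paper's proof is shorter given the machinery of Section~\ref{chain_section}, but it quietly requires applying Theorem~\ref{infinite} --- stated for a \emph{set} $\mathcal{Q}$ of quantifiers with a cardinality bound --- to the proper class $\mathcal{Q}_{\emb}$, which works only after restricting to the set of quantifiers occurring in the given sentence; your proof sidesteps this entirely and needs no cardinal arithmetic. Your induction also yields the stronger parametrized conclusion $(\mathfrak{A},\overline{a})\equiv_{\emb}(\mathfrak{B},f\overline{a})$ for every tuple $\overline{a}$, i.e.\ $\mathfrak{A}\preceq_{\mathcal{L}_{\infty\omega}(\mathcal{Q}_{\emb})}\mathfrak{B}$, directly rather than as an afterthought. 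Your closing observation that embedding-closure and quantifier elimination are each needed at a specific point of the quantifier step is accurate and well placed.
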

\begin{proof}
We can build a chain of arbitrary length in which structures $\mathfrak{A}$ and $\mathfrak{B}$ alternate.
By Theorem \ref{infinite} the truth value of any sentence $\varphi \in \mathcal{L}_{\infty\omega}(\mathcal{Q}_{\emb})$ is eventually preserved in this chain, so $\mathfrak{A} \vDash \varphi \Leftrightarrow \mathfrak{B} \vDash \varphi$.
\end{proof}

\begin{example}\label{completeness}
Let $\eta = (0,1)$, that is $\eta$ is the open real line interval between $0$ and $1$, and $\xi = \eta \setminus \{\frac{1}{2}\}$. Then $\eta$ and $\xi$ are both quasi-homogeneous and bi-embeddable, so $\eta \equiv_{\emb} \xi$. Thus, the completeness of an ordering is not definable in $\mathcal{L}_{\infty \omega}(\mathcal{Q}_{\emb})$.
\end{example}

\begin{example}\label{cofinality}
We denote by $\omega_{\alpha}^{\omega_{\alpha}}$ the set of all functions $\omega_{\alpha} \rightarrow \omega_{\alpha}$.
Let $\aleph_{\alpha}$ be a regular cardinal, $\eta$ the lexicographic ordering of the set $\omega_{\alpha}^{\omega_{\alpha}}$, and $\xi$ the lexicographic ordering of the set $\omega \times \omega_{\alpha}^{\omega_{\alpha}}$. 
Then $\cf(\eta) = \aleph_{\alpha}$ and $\cf(\xi) = \aleph_0$, where $\cf$ means the cofinality of an ordering.
The orderings $\eta$ and $\xi$ are both quasi-homogeneous  and bi-embeddable, hence $\eta \equiv_{\emb} \xi$. 
Therefore, for any ordinal $\beta$,
the property of having cofinality $\aleph_{\beta}$ is
not definable in $\mathcal{L}_{\infty \omega}(\mathcal{Q}_{\emb})$.
\end{example}

We can use Example \ref{cofinality} to obtain the following result:

\begin{theorem}
The logic $\mathcal{L}_{\infty\omega}(\mathcal{Q}_{\emb})$ does not allow interpolation for the logic $\mathcal{L}_{\omega\omega}(Q_1)$. 
\end{theorem}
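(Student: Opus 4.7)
The plan is to reduce the question to the indistinguishability of the orderings $\eta$ and $\xi$ exhibited in Example \ref{cofinality}. Fix $\alpha=1$ there, so that $\cf(\eta)=\aleph_1$, $\cf(\xi)=\aleph_0$, and $\eta\equiv_{\emb}\xi$. I would produce two $\mathcal{L}_{\omega\omega}(Q_1)$-sentences $\varphi$ and $\psi$ in vocabularies $\tau_1=\{<,U\}$ and $\tau_2=\{<,W\}$ (with $U,W$ fresh unary symbols), so that $\tau_1\cap\tau_2=\{<\}$, such that $\vDash\varphi\to\neg\psi$ while $\xi$ expands to a model of $\varphi$ and $\eta$ expands to a model of $\psi$. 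Any interpolant $\chi\in\mathcal{L}_{\infty\omega}(\mathcal{Q}_{\emb})[\{<\}]$ for the valid implication $\varphi\to\neg\psi$ would then hold in $\xi$ (from $\vDash\varphi\to\chi$) and fail in $\eta$ (from $\vDash\chi\to\neg\psi$), contradicting $\eta\equiv_{\emb}\xi$.

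Concretely, $\varphi$ would assert that $<$ is a linear order without maximum, that $U$ is $<$-cofinal via $\forall x\exists y(x<y\wedge U(y))$, and that $U$ is countable via $\neg Q_1 x\,U(x)$. Dually, $\psi$ would assert that $<$ is a linear order without maximum, $W$ is $<$-cofinal, $W$ is uncountable ($Q_1 x\,W(x)$), and every proper $<$-initial segment of $W$ is countable ($\forall x\,\neg Q_1 y(y<x\wedge W(y))$). The key step is verifying $\vDash\varphi\to\neg\psi$: in any common model the $U$ of $\varphi$ is cofinal in an order without maximum, hence countably infinite; enumerate $U=\{u_n:n<\omega\}$. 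Every $w\in W$ has a strict $<$-upper bound (no maximum), so by cofinality of $U$ a strict upper bound in $U$, whence $W=\bigcup_{n<\omega}\{w\in W:w<u_n\}$, a countable union of countable sets by $\psi$, contradicting the uncountability of $W$.

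The two required expansions are routine. The structure $\xi$ has a countable cofinal $U^*$ because $\cf(\xi)=\aleph_0$, so $(\xi,U^*)\vDash\varphi$. For $\eta$, I would fix a strictly increasing cofinal sequence $(a_\beta)_{\beta<\omega_1}$ in $\eta$ and let $W^*=\{a_\beta:\beta<\omega_1\}$; then $W^*$ has size $\aleph_1$ and is cofinal, and for every $x\in\eta$ there is $\gamma<\omega_1$ with $x<a_\gamma$, so $\{w\in W^*:w<x\}\subseteq\{a_\beta:\beta<\gamma\}$ is countable. Hence $(\eta,W^*)\vDash\psi$. The only step carrying genuine content is the combinatorial argument in the second paragraph; the rest is unwinding the definitions and invoking Example \ref{cofinality}.
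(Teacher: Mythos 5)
Your proposal is correct and takes essentially the same approach as the paper: both reduce to Example \ref{cofinality} via the $Q_1$-sentences ``there is a countable cofinal subset'' versus ``there is a cofinal $\omega_1$-like subset,'' the paper phrasing the failure as two disjoint projective classes that no $\mathcal{L}_{\infty\omega}(\mathcal{Q}_{\emb})$-elementary class can separate, and you phrasing it equivalently as a valid implication with no interpolant in the common vocabulary $\{<\}$. Your explicit verification of $\vDash\varphi\to\neg\psi$ and of the two expansions just spells out what the paper leaves implicit.
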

\begin{proof}
Let $C_0$, $C_1$ be the classes of all linear orderings of cofinality $\aleph_0$, $\aleph_1$, respectively. 
Both $C_0$ and $C_1$ are projective classes in $\mathcal{L}_{\omega\omega}(Q_1)$.
To see this, let $\psi(\leq,U)$ be the sentence saying that $\leq$ is a linear ordering of the universe without a greatest element and $U$ induces a cofinal subordering of $\leq$. Furthermore, let $\chi(\leq,U)$ be the following sentence:
\[
	\chi(\leq,U) := \ Q_1xU(x)
	\wedge \forall x\big(U(x)\rightarrow \neg Q_1y\big(U(y) \wedge y \leq x\big) \big).
\]
We say that a linear ordering $\leq$ is \emph{$\omega_1$-like} if the sentence
\[
	Q_1(x=x) \wedge \forall x \neg Q_1 y(y\leq x)
\]
is true in it. 
Thus, if $\leq$ is a linear ordering then $\chi(\leq,U)$ says that the subordering of $\leq$ induced by $U$ is $\omega_1$-like. Now let
\[
	\varphi_0(\leq,U) := \psi(\leq,U) \wedge \neg Q_1 x U(x)
\]
and
\[
	\varphi_1(\leq,U) := \psi(\leq,U) \wedge \chi(\leq,U).
\]
Then clearly the sentence $\varphi_0$ is a projective definition of the class $C_0$.
That $\varphi_1$ defines projectively the class $C_1$ follows from the fact that an ordering has cofinality $\aleph_1$ if and only if it has a cofinal $\omega_1$-like subordering.

Thus, $C_0$ and $C_1$ are disjoint projective classes in $\mathcal{L}_{\omega\omega}(Q_1)$ that by Example \ref{cofinality} cannot be separated by any elementary class in $\mathcal{L}_{\infty\omega}(\mathcal{Q}_{\emb})$ from which the claim follows.
\end{proof}

\section{Embedding game}\label{game_section}

In this section we will introduce a game characterizing relation $\equiv_{\emb}$.
The \emph{embedding game} is played on two structures $\mathfrak{A}$ and $\mathfrak{B}$ of the same vocabulary by two players, Spoiler and Duplicator. A \emph{position} in the game is a tuple $(\mathfrak{A},\overline{a},\mathfrak{B},\overline{b})$, where $\overline{a}$ and $\overline{b}$ are tuples of elements of $A$ and $B$, respectively. The game proceeds in rounds and starts from the position $(\mathfrak{A},\emptyset,\mathfrak{B},\emptyset)$. 
Suppose that $n$ rounds of the game have been played and the position is $(\mathfrak{A},\overline{a},\mathfrak{B},\overline{b})$.
First Duplicator chooses embeddings $f \colon A \rightarrow B$ and $g \colon B \rightarrow A$ such that $f\overline{a} = \overline{b}$ and $g\overline{b} = \overline{a}$.
If there are no such embeddings then Spoiler wins the game.
Otherwise Spoiler selects a natural number $k$ and a tuple $\overline{c} \in A^k$ or $\overline{d} \in B^k$.
This completes the round, and the game continues from the position $(\mathfrak{A},\overline{a}\overline{c},\mathfrak{B},\overline{b}f\overline{c})$ or $(\mathfrak{A},\overline{a}g\overline{d},\mathfrak{B},\overline{b}\overline{d})$ depending on whether Spoiler chose $\overline{c} \in A^n$ or $\overline{d} \in B^n$. 
Duplicator wins the game if and only if the game goes on infinitely.

\begin{definition}
For every formula $\varphi \in \mathcal{L}_{\infty\omega}$, we define inductively its \emph{quantifier rank} denoted by $\qr(\varphi)$ by setting
\begin{align*}
	\qr(\varphi) &= 0 \text{ if } \varphi \text{ is quantifier-free}, \\
	\qr(\neg\varphi) &= \qr(\varphi), \\
	\qr(\bigvee\Phi) &= \qr(\bigwedge\Phi) = \sup\{\qr(\varphi) : \varphi \in \Phi\}, \\
	\qr(Q(\overline{x}_{\delta}\varphi_{\delta})) &= \sup\{\qr(\varphi_{\delta}) : \delta < \kappa \} + 1.
\end{align*}
We write $\mathfrak{A} \simeq_{\emb} \mathfrak{B}$ if Duplicator wins the embedding game on $\mathfrak{A}$ and $\mathfrak{B}$, and $\mathfrak{A} \simeq_{\emb}^{\gamma} \mathfrak{B}$ if Duplicator does not lose in the first $\gamma$ rounds. 
We write 
$\mathfrak{A} \equiv_{\emb} \mathfrak{B}$ if $\mathfrak{A}$ and $\mathfrak{B}$ agree on all sentences of $\mathcal{L}_{\infty \omega}(\mathcal{Q}_{\emb})$.
Notation $\mathfrak{A} \equiv_{\emb}^{\gamma} \mathfrak{B}$ means that $\mathfrak{A}$ and $\mathfrak{B}$ agree on all the sentences of $\mathcal{L}_{\infty \omega}(\mathcal{Q}_{\emb})$ whose quantifier rank is $\leq \gamma$.
\end{definition}

\begin{remark}\label{bijective}
The embedding game was inspired by and bears some resemblance to Hella's \emph{bijective game} which was introduced in \cite{Hella: 1984}.
In it, Duplicator selects a bijection $f$ between two structures $\mathfrak{A}$, $\mathfrak{B}$ instead of a pair of embeddings.
Then Spoiler chooses a tuple $\overline{c} \in A^n$ where $n<\omega$ is a number whose value is fixed at the beginning of the game (we say that it is \emph{$n$-bijective game}). Duplicator loses if $c \mapsto f(c)$ is not a partial isomorphism on the elements of $\overline{c}$ between the structures $(\mathfrak{A},\overline{a})$ and $(\mathfrak{B},\overline{b})$ where $\overline{a}$ and $\overline{b}$ are elements chosen in the previous rounds of the game.
Otherwise the game continues to the next round from the position $(\mathfrak{A},\overline{a}\overline{c},\mathfrak{B},\overline{b}f\overline{c})$.
The $n$-bijective game characterizes the equivalence of structures in relation to the logic $\mathcal{L}_{\infty\omega}$ extended with the class of all generalized quantifiers of arity $\leq n$. 
\end{remark}

\begin{remark}
Let $\tau$ be a vocabulary, $\mathfrak{A}$ and $\mathfrak{B}$ $\tau$-structures, $\overline{a} \in A^n$ and $\overline{b} \in B^n$.
A position $(\mathfrak{A},\overline{a},\mathfrak{B},\overline{b})$ is equivalent to the position $(\mathfrak{A}',\emptyset,\mathfrak{B}',\emptyset)$, where $\mathfrak{A}'$ and $\mathfrak{B}'$ are structures of vocabulary $\tau$ expanded with new constant symbols $c_1,\dots,c_n$ with interpretations $c_i^{\mathfrak{A}'} = a_i$ and $c_i^{\mathfrak{B}'} = b_i$ for all $i$. 
For the sake of brevity, we will use vocabulary expansions instead of writing positions explicitely.  
\end{remark}

\begin{theorem}\label{game}
Let $\tau$ be vocabulary and $\mathfrak{A}$, $\mathfrak{B}$ $\tau$-structures.
For all ordinals $\gamma \geq 1$ we have $
	\mathfrak{A} \simeq_{\emb}^{\gamma} \mathfrak{B} \text{ if and only if } \mathfrak{A} \equiv_{\emb}^{\gamma} \mathfrak{B}$.
\end{theorem}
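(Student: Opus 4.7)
My plan is to prove both directions simultaneously by transfinite induction on $\gamma \geq 1$, and --- to make the induction go through --- to strengthen the statement to arbitrary positions: for any $\overline{a} \in A^n$ and $\overline{b} \in B^n$, I will show $(\mathfrak{A}, \overline{a}) \simeq_{\emb}^{\gamma} (\mathfrak{B}, \overline{b})$ if and only if $(\mathfrak{A}, \overline{a}) \equiv_{\emb}^{\gamma} (\mathfrak{B}, \overline{b})$, where a position is handled through the constant-expansion remark above.

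For $\simeq^{\gamma} \Rightarrow \equiv^{\gamma}$ I would use a secondary induction on $\varphi$ with $\qr(\varphi) \leq \gamma$. Literals are settled by the embeddings $f$ and $g$ that Duplicator provides in the opening round, and Boolean cases are immediate. When $\varphi = Q(\overline{y}_{\delta} \psi_{\delta})_{\delta < \kappa}$ with each $\qr(\psi_{\delta}) < \gamma$: assuming $\mathfrak{A}, \overline{a} \vDash \varphi$, for every tuple $\overline{c}$ the position $(\overline{a}\overline{c}, \overline{b} f\overline{c})$ satisfies $\simeq^{\gamma'}$ for some $\gamma' \geq \qr(\psi_{\delta})$, so by the outer inductive hypothesis $\mathfrak{A}, \overline{a}\overline{c} \vDash \psi_{\delta} \iff \mathfrak{B}, \overline{b} f\overline{c} \vDash \psi_{\delta}$. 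Hence $f$ is an embedding of $(A, (\psi_{\delta}^{\mathfrak{A}, \overline{a}})_{\delta < \kappa})$ into $(B, (\psi_{\delta}^{\mathfrak{B}, \overline{b}})_{\delta < \kappa})$, so embedding-closure of $Q$ yields $\mathfrak{B}, \overline{b} \vDash \varphi$; $g$ handles the converse.

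For $\equiv^{\gamma} \Rightarrow \simeq^{\gamma}$ I would construct Hintikka-style formulas $\sigma^{\gamma}_{\mathfrak{A}, \overline{a}}(\overline{x})$ of quantifier rank $\leq \gamma$ with the properties that (a) $\mathfrak{A}, \overline{a} \vDash \sigma^{\gamma}_{\mathfrak{A}, \overline{a}}$ and (b) $\mathfrak{B}, \overline{b} \vDash \sigma^{\gamma}_{\mathfrak{A}, \overline{a}}$ implies $(\mathfrak{A}, \overline{a}) \simeq^{\gamma} (\mathfrak{B}, \overline{b})$; the theorem then follows, since $\mathfrak{A} \equiv^{\gamma}_{\emb} \mathfrak{B}$ transports $\sigma^{\gamma}_{\mathfrak{A}, \overline{a}}$ from $\mathfrak{A}$ to $\mathfrak{B}$. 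The base case is the atomic diagram of $\overline{a}$. At a successor $\gamma = \beta + 1$, pick representatives $\sigma^{\beta}_{i}(\overline{x}, \overline{y})$ of all the formulas appearing as $\sigma^{\beta}_{\mathfrak{A}, \overline{a}\overline{c}}$, and let $\mathfrak{A}^{\dagger}$ be $\mathfrak{A}$ in the vocabulary $\tau \cup \{P\} \cup \{S_{i}\}_{i}$ with $P^{\mathfrak{A}^{\dagger}} = \{\overline{a}\}$ and $S_{i}^{\mathfrak{A}^{\dagger}} = (\sigma^{\beta}_{i})^{\mathfrak{A}, \overline{a}}$. The class of structures into which $\mathfrak{A}^{\dagger}$ embeds is embedding-closed; applying its quantifier with $P$ interpreted as $\{\overline{x}\}$ and $S_{i}$ by $\sigma^{\beta}_{i}(\overline{x}, \cdot)$ produces a sentence of quantifier rank $\leq \beta + 1$ expressing that Duplicator's $f$-move exists and preserves $\sigma^{\beta}$-membership. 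Dually, the class of structures that embed into $\mathfrak{A}^{\dagger}$ is substructure-closed, hence --- via the complement trick noted after Lemma \ref{preservation} --- definable in $\mathcal{L}_{\infty \omega}(\mathcal{Q}_{\emb})$, and the corresponding sentence encodes the $g$-move. The formula $\sigma^{\beta + 1}_{\mathfrak{A}, \overline{a}}$ is the conjunction of these; at limits, set $\sigma^{\gamma}_{\mathfrak{A}, \overline{a}} = \bigwedge_{\beta < \gamma} \sigma^{\beta}_{\mathfrak{A}, \overline{a}}$.

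The main obstacle will be the successor-stage Hintikka construction: one has to verify that the two encoded classes really are embedding- and substructure-closed in the expanded vocabulary, that the quantifier applications genuinely force the maps $f$ and $g$ to carry each $\sigma^{\beta}$-type to a matching $\sigma^{\beta}$-type of the tuple Spoiler may later choose, and then --- combining this with the inductive hypothesis at $\beta$ --- that each position reached after one round satisfies $\simeq^{\beta}$, so Duplicator survives the full $\beta + 1$ rounds.
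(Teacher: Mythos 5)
Your proposal is correct in substance, and while the easy direction ($\simeq^{\gamma}_{\emb}\Rightarrow\equiv^{\gamma}_{\emb}$) coincides with the paper's argument (induction on formulas, using Duplicator's first-round embeddings to transfer the subformulas $\psi_{\delta}$ and then invoking embedding-closure of $Q$), your treatment of the hard direction is genuinely different. The paper argues by contraposition at a fixed $\gamma$: assuming $\mathfrak{A}\not\simeq^{\gamma}_{\emb}\mathfrak{B}$, it assigns to \emph{each} pair $(f,g)$ of embeddings a witnessing formula $\psi_f$ or $\psi_g$ of rank $<\gamma$, and then packages the whole family $(\psi_h)_{h\in\mathcal{F}_A}$ into a single quantifier $Q_A$ (the smallest embedding-closed quantifier containing $(A,(\psi_h^{\mathfrak{A}})_{h\in\mathcal{F}_A})$), with a dual $Q_B$ for the other direction; one of the two resulting rank-$\leq\gamma$ sentences must separate $\mathfrak{A}$ from $\mathfrak{B}$. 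You instead build Hintikka-style characteristic formulas $\sigma^{\gamma}_{\mathfrak{A},\overline{a}}$ by transfinite recursion, encoding the $f$-move via the embedding-closed class $\{\mathfrak{C}:\mathfrak{A}^{\dagger}\leq\mathfrak{C}\}$ and the $g$-move via the complement of the substructure-closed class $\{\mathfrak{C}:\mathfrak{C}\leq\mathfrak{A}^{\dagger}\}$. Both proofs rest on the same key device --- the smallest embedding-closed quantifier containing a fixed structure decorated with definable relations, together with the complementation trick after Lemma \ref{preservation} --- but the paper's version is shorter and avoids a recursion on formulas, whereas yours delivers more: a rank-$\leq\gamma$ characteristic formula for every position, a Scott-style byproduct that would, for instance, streamline the subsequent proposition on $\omega$-round games. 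Two small points to tidy up: the paper requires quantifier vocabularies to be relational, so $\mathfrak{A}^{\dagger}$ must be formed from the relationalization of $\mathfrak{A}$ (the paper does exactly this in its base case $\gamma=1$, which your scheme absorbs into the first successor step); and at infinite stages your construction certifies survival for $1+\beta$ rounds rather than $\beta+1$, an ordinal-arithmetic wrinkle that the paper's own proof shares (it silently replaces $\gamma-1$ by $\gamma$ when $\gamma$ is infinite) and that does not affect the statement as used.
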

\begin{proof}
We use induction on $\gamma$. Supppose first that $\mathfrak{A} \simeq^1_{\emb} \mathfrak{B}$. Then $\mathfrak{A} \leq \mathfrak{B}$ and $\mathfrak{B} \leq \mathfrak{A}$, so $\mathfrak{A} \equiv_{\emb}^1 \mathfrak{B}$ by Lemma \ref{preservation}. Assume next that $\mathfrak{A} \equiv^1_{\emb} \mathfrak{B}$.
Let $\mathfrak{A}'$ be the structure $\mathfrak{A}$ with functions and constants replaced by corresponding relations. 
Let $Q$ be the smallest embedding-closed quantifier containing $\mathfrak{A}'$.
For each symbol of $\tau$ define $\varphi_R := R$ for all relation symbols $R \in \tau$, $\varphi_f := f(\overline{x}) = y$ for all function symbols $f \in \tau$ and $\varphi_c := x = c$ for all constant symbols $c \in \tau$. Then $\mathfrak{A} \vDash Q(\overline{x}_S \varphi_S)_{S \in \tau}$, and since $\mathfrak{A} \equiv^1_{\emb} \mathfrak{B}$, we have $\mathfrak{B} \vDash Q(\overline{x}_S \varphi_S)_{S \in \tau}$, so $\mathfrak{A} \leq \mathfrak{B}$. In the same way we prove that $\mathfrak{B} \leq \mathfrak{A}$, so $\mathfrak{A} \simeq^1_{\emb} \mathfrak{B}$. The base step of induction is thus proved. Assume now that $\gamma > 1$ and the claim holds for all $\alpha < \gamma$.

Suppose first that $\mathfrak{A} \simeq^{\gamma}_{\emb} \mathfrak{B}$.
We use induction on the structure of formulas to show that $\mathfrak{A} \equiv_{\emb}^{\gamma} \mathfrak{B}$. Thus, assume that $\mathfrak{A} \vDash \varphi$ and $\qr(\varphi) \leq \gamma$.
If $\varphi$ is quantifier-free then $\mathfrak{B} \vDash \varphi$ since otherwise Duplicator would lose immediately.
If $\varphi = \neg \psi$ or $\varphi = \bigwedge \Psi$ and the claim holds for $\psi$ and all $\chi \in \Psi$ then it is straightforward to see that $\mathfrak{B} \vDash \varphi$ as well.
Now assume that $\varphi = Q(\overline{x}_{\delta}\psi_{\delta})_{\delta<\kappa}$ and the claim is true for all $\psi_{\delta}$.
Let $f$ be an embedding $\mathfrak{A} \rightarrow \mathfrak{B}$ that 
Duplicator can choose in the first round according to her winning strategy.
Then for all $\overline{a}\in A^{<\omega}$ we have $(\mathfrak{A},\overline{a})\simeq^{\gamma-1}_{\emb}(\mathfrak{B},f\overline{a})$
if $\gamma$ is finite and $(\mathfrak{A},\overline{a})\simeq^{\gamma}_{\emb}(\mathfrak{B},f\overline{a})$
if $\gamma$ is infinite,
so $(\mathfrak{A},\overline{a})\simeq^{\alpha}_{\emb}(\mathfrak{B},f\overline{a})$
for all $\alpha<\gamma$.
Thus, by the induction hypothesis, $(\mathfrak{A},\overline{a})\equiv^{\alpha}_{\emb}(\mathfrak{B},f\overline{a})$
for all $\alpha<\gamma$ so 
\[
	\mathfrak{A} \vDash \psi_{\delta}(\overline{a}) \Leftrightarrow \mathfrak{B} \vDash \psi_{\delta}(f\overline{a})
\]
for all $\delta<\kappa$ since $\qr(\psi_{\delta}) < \gamma$ by the definition of quantifier rank and the fact that $\qr(\varphi) \leq \gamma$.
This means that $f$ is an embedding
\[
	(A, (\psi_{\delta}^{\mathfrak{A}})_{\delta < \kappa}) \rightarrow (B, (\psi_{\delta}^{\mathfrak{B}})_{\delta < \kappa}),
\]
so $\mathfrak{B} \vDash Q(\overline{x}_{\delta} \psi_{\delta})_{\delta < \kappa}$ since $Q$ is embedding-closed.
In the same way we show that $\mathfrak{B} \vDash \varphi$ implies $\mathfrak{A} \vDash \varphi$ for all $\varphi \in \mathcal{L}_{\infty\omega}(\mathcal{Q}_{\emb})$ with quantifier rank $\leq \gamma$ thus proving that $\mathfrak{A} \equiv^{\gamma}_{\emb} \mathfrak{B}$.

For the other direction, assume that $\mathfrak{A} \not \simeq^{\gamma}_{\emb} \mathfrak{B}$.
We denote by $\mathcal{F}_A$ the set of all embeddings $\mathfrak{A} \rightarrow \mathfrak{B}$, and by $\mathcal{F}_B$ the set of all embeddings $\mathfrak{B} \rightarrow \mathfrak{A}$.
Then for each pair of $(f,g) \in \mathcal{F}_A\times\mathcal{F}_B$
there are tuples $\overline{a} \in A^{<\omega}$, $\overline{b} \in B^{<\omega}$ such that $(\mathfrak{A},\overline{a}) \not \simeq^{\alpha}_{\emb} (\mathfrak{B},f\overline{a})$ or $(\mathfrak{A},g\overline{b}) \not \simeq^{\alpha}_{\emb} (\mathfrak{B},\overline{b})$ for some $\alpha < \gamma$.
Thus, by the induction hypothesis, for each pair of embeddings $(f,g)$ there is a formula $\psi_f$ or a formula $\psi_g$ of quantifier rank $< \gamma$, such that
\begin{align*}
	(*)\ &\mathfrak{A} \vDash \psi_f(\overline{a}) \nLeftrightarrow \mathfrak{B} \vDash \psi_f(f\overline{a}) \text{ or } \\
	&\mathfrak{A} \vDash \psi_g(g\overline{b}) \nLeftrightarrow \mathfrak{B} \vDash \psi_g(\overline{b})
\end{align*}
for some $\overline{a} \in A^{<\omega}$ or $\overline{b} \in B^{<\omega}$.
Let $Q_A$ be the smallest embedding-closed quantifier containing the structure
$(A, (\psi_h^{\mathfrak{A}})_{h \in \mathcal{F}_A})$, and $Q_B$ be the smallest embedding-closed quantifier containing the structure
$(B, (\psi_h^{\mathfrak{B}})_{h \in \mathcal{F}_B})$.
Then $\mathfrak{A} \vDash Q_A(\overline{x}_h \psi_h)_{h \in \mathcal{F}_A}$ and 
$\mathfrak{B} \vDash Q_B(\overline{x}_h \psi_h)_{h \in \mathcal{F}_B}$.
Assume to the contrary that  $\mathfrak{B} \vDash Q_A(\overline{x}_h \psi_h)_{h \in \mathcal{F}_A}$ and 
$\mathfrak{A} \vDash Q_B(\overline{x}_h \psi_h)_{h \in \mathcal{F}_B}$.
Then there are embeddings  
\begin{align*}
	f : (A,(\psi_h^{\mathfrak{A}})_{h \in \mathcal{F}_A}) &\rightarrow (B,(\psi_h^{\mathfrak{B}})_{h \in \mathcal{F}_A}) \text{ and} \\
	g : (B,(\psi_h^{\mathfrak{B}})_{h \in \mathcal{F}_B}) &\rightarrow (A,(\psi_h^{\mathfrak{A}})_{h \in \mathcal{F}_B}),
\end{align*}
so there are embeddings $f$ and $b$ such that
\begin{align*}
	\mathfrak{A} \vDash \psi_f(\overline{a}) &\Leftrightarrow \mathfrak{B} \vDash \psi_f(f\overline{a}) \text{ and }\\
	\mathfrak{A} \vDash \psi_g(g\overline{b}) &\Leftrightarrow \mathfrak{B} \vDash \psi_g(\overline{b})
\end{align*}
for all $\overline{a}$ and $\overline{b}$,
which contradicts $(*)$. 
Thus, $\mathfrak{B} \nvDash Q_A(\overline{x}_h \psi_h)_{h \in \mathcal{F}_A}$ or 
$\mathfrak{A} \nvDash Q_B(\overline{x}_h \psi_h)_{h \in \mathcal{F}_B}$,
which completes the induction step. 
\end{proof}

The next proposition says that in order to ensure that $\mathfrak{A} \equiv_{\emb} \mathfrak{B}$ it is sufficient for Duplicator to have a winning strategy for the embedding game of length $\omega$.

\begin{proposition}
For all structures $\mathfrak{A}$, $\mathfrak{B}$ of the same vocabulary we have
$\mathfrak{A} \equiv_{\emb} \mathfrak{B}$ if and only if $\mathfrak{A} \equiv_{\emb}^{\omega} \mathfrak{B}$.
\end{proposition}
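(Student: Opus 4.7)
The forward implication $\equiv_{\emb} \Rightarrow \equiv_{\emb}^{\omega}$ is immediate. The plan for the converse is to extract from Theorem \ref{game} a fixed back-and-forth system of positions and use it in an induction on formulas that is not constrained by quantifier rank, in the spirit of the standard proof that the $\omega$-length Ehrenfeucht-Fra\"iss\'e game for $\mathcal{L}_{\infty\omega}$ captures all of $\mathcal{L}_{\infty\omega}$-equivalence.

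Assume $\mathfrak{A} \equiv_{\emb}^{\omega} \mathfrak{B}$. By Theorem \ref{game}, $\mathfrak{A} \simeq_{\emb}^{\omega} \mathfrak{B}$, so Duplicator has a winning strategy in the embedding game from the initial position. Let $J$ be the class of positions $(\mathfrak{A},\overline{a},\mathfrak{B},\overline{b})$ from which Duplicator has a winning strategy. Then $J$ is non-empty and has the following closure property: whenever $p \in J$, the embeddings $f \colon A \to B$ and $g \colon B \to A$ prescribed by Duplicator's winning strategy at $p$ (which by definition satisfy $f\overline{a} = \overline{b}$ and $g\overline{b} = \overline{a}$) are such that for every choice of $\overline{c} \in A^{<\omega}$ or $\overline{d} \in B^{<\omega}$ by Spoiler the resulting position lies in $J$. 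This is because the continuation of a winning strategy is again winning and because Duplicator commits to $(f,g)$ before Spoiler reveals his tuple, so the same pair $(f,g)$ witnesses closure simultaneously for all of Spoiler's possible responses.

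Next, I would show by induction on $\varphi \in \mathcal{L}_{\infty\omega}(\mathcal{Q}_{\emb})$ that whenever $(\mathfrak{A},\overline{a},\mathfrak{B},\overline{b}) \in J$ and $\overline{a},\overline{b}$ include the parameters assigned to the free variables of $\varphi$, then $\mathfrak{A},\overline{a} \vDash \varphi \Leftrightarrow \mathfrak{B},\overline{b} \vDash \varphi$. The atomic, negation and infinite-conjunction cases are routine. For $\varphi = Q(\overline{x}_{\delta}\psi_{\delta})_{\delta < \kappa}$ with $Q \in \mathcal{Q}_{\emb}$, assume $\mathfrak{A},\overline{a} \vDash \varphi$, i.e.\ $(A,(\psi_{\delta}^{\mathfrak{A},\overline{a}})_{\delta < \kappa}) \in K_Q$. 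Fix $(f,g)$ from the closure property. For every $\delta$ and every $\overline{c} \in A^{|\overline{x}_{\delta}|}$ the position $(\mathfrak{A},\overline{a}\overline{c},\mathfrak{B},\overline{b}f\overline{c})$ lies in $J$, so by the induction hypothesis applied to $\psi_{\delta}$, $\overline{c} \in \psi_{\delta}^{\mathfrak{A},\overline{a}}$ iff $f\overline{c} \in \psi_{\delta}^{\mathfrak{B},\overline{b}}$. Hence $f$ is an embedding of $(A,(\psi_{\delta}^{\mathfrak{A},\overline{a}})_{\delta < \kappa})$ into $(B,(\psi_{\delta}^{\mathfrak{B},\overline{b}})_{\delta < \kappa})$, and embedding-closedness of $Q$ yields $\mathfrak{B},\overline{b} \vDash \varphi$. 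The converse is symmetric via $g$. Evaluating the induction at the initial position gives $\mathfrak{A} \equiv_{\emb} \mathfrak{B}$.

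The heart of the argument is the single-round closure of $J$, which relies essentially on the fact that in the embedding game Duplicator picks her embeddings before Spoiler picks his tuple; this is exactly what makes a uniform $(f,g)$ available. I expect the main technical nuisance to be bookkeeping with free variables and the (possibly infinitary) tuples of bound variables in the induction, but once the closure property of $J$ is in place the generalized-quantifier case essentially reproduces the corresponding step in the proof of Theorem \ref{game}, now with the crucial difference that no quantifier-rank restriction is needed.
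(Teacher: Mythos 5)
Your proof is correct, but it takes a genuinely different route from the paper's. The paper proves the converse by transfinite induction on the quantifier rank $\gamma$, showing that $\equiv^{\omega}_{\emb}$ implies $\equiv^{\gamma}_{\emb}$ for every ordinal $\gamma$; in the quantifier step it splits on whether $\alpha = \sup_{\delta}\qr(\psi_{\delta})$ is finite or infinite, and in the infinite case it invokes Theorem \ref{game} at level $\alpha$ together with the absorption $1+\alpha=\alpha$ to carry Duplicator's first-round embedding down to the extended positions. You instead apply Theorem \ref{game} only once, at $\gamma=\omega$, to extract a non-losing strategy, package its winning positions into a back-and-forth system $J$ closed under one round of the game, and then run a single induction on the structure of formulas with no rank bookkeeping. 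The two arguments share the same engine --- Duplicator's embedding $f$, fixed before Spoiler reveals his tuple, preserves each $\psi_{\delta}^{\mathfrak{A},\overline{a}}$ uniformly, and embedding-closedness of $Q$ finishes the quantifier case --- but your packaging buys something concrete: it never needs the game relation $\simeq^{\alpha}_{\emb}$ for ordinals $\alpha>\omega$, whose meaning at limit stages the paper leaves implicit, and it makes transparent why $\omega$ rounds suffice, namely because a single strategy surviving all finite rounds already yields a back-and-forth system. The paper's version, in exchange, stays uniform with the formulation of Theorem \ref{game} and yields the intermediate quantitative statements $\equiv^{\omega}_{\emb}\Rightarrow\equiv^{\gamma}_{\emb}$ level by level.
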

\begin{proof}
The implication from left to right is trivial.
For the other direction, we use induction on ordinals $\gamma$ to show that for all structures $\mathfrak{A}$, $\mathfrak{B}$ if $\mathfrak{A} \equiv_{\emb}^{\omega} \mathfrak{B}$ then $\mathfrak{A} \equiv_{\emb}^{\gamma} \mathfrak{B}$.
Thus suppose that the claim holds for all $\alpha < \gamma$.
Let $\varphi$ be an $\mathcal{L}_{\infty\omega}(\mathcal{Q}_{\emb})$-sentence with $\qr(\varphi) \leq \gamma$.
We use induction on the structure of $\varphi$ to show that $\mathfrak{A}$ and $\mathfrak{B}$ agree on its truth value.
The only interesting case is when $\varphi = Q(\overline{x}_{\delta}\psi_{\delta})_{\delta<\kappa}$.
Suppose that $\mathfrak{A} \vDash \varphi$, and let $\alpha = \sup\{\qr(\psi_{\delta}) : \delta < \kappa \}$
By the definition of quantifier rank we have $\alpha < \gamma$ so by the induction hypthesis $\mathfrak{A} \simeq^{\alpha}_{\emb} \mathfrak{B}$.
If $\alpha$ is finite then $\mathfrak{B} \vDash \varphi$ since $\mathfrak{A} \equiv^{\omega}_{\emb} \mathfrak{B}$ and we are done.
Thus assume that $\alpha$ is infinite.  
Let $f : \mathfrak{A} \rightarrow \mathfrak{B}$ be an embedding that Duplicator can choose in the first round in order to win the game of length $\alpha$.
Then $(\mathfrak{A},\overline{a}) \simeq^{\alpha}_{\emb} (\mathfrak{B},f\overline{a})$ for all $\overline{a} \in A^{<\omega}$ since $\alpha$ is infinite so
\[
	\mathfrak{A} \vDash \psi_{\delta}(\overline{a}) \Leftrightarrow \mathfrak{B} \vDash \psi_{\delta}(f\overline{a}) 
\]
for all $\overline{a} \in A^{\omega}$. Hence, $f$ is an embedding 
\[
	(A,(\psi_{\delta}^{\mathfrak{A}})_{\delta<\kappa}) \rightarrow (B,(\psi_{\delta}^{\mathfrak{B}})_{\delta<\kappa})
\]
so $\mathfrak{B} \vDash \varphi$ since $Q$ is embedding-closed.
In the same way we show that $\mathfrak{B} \vDash \varphi$ implies $\mathfrak{A} \vDash \varphi$.
\end{proof}

\begin{example}\label{hier}
Let $E_0$ be an equivalence relation with countably infinite number of $E_0$-classes, and suppose that each $E_0$-class has cardinality $\aleph_1$.
Let $E_1$ satisfy the same conditions with exception of having one $E_1$-class of cardinality $\aleph_0$.
Then $E_0$ and $E_1$ are bi-embeddable, so $E_0 \equiv_{\emb}^1 E_1$.

Let $f \colon E_0 \rightarrow E_1$ and $g \colon E_1 \rightarrow E_0$ be embeddings.
Let $[a]_{E_1}$ be the $E_1$-class of cardinality $\aleph_0$,
and suppose Spoiler chooses the embedding $g$ and the element $a$. 
It is easy to see that there is no embedding of $E_0$ into $E_1$ that maps $g(a)$ to $a$, since the restriction of such an embedding to an $E_0$-class must be included in some $E_1$-class, and $|[g(a)]_{E_0}| = \aleph_1$ and $|[a]_{E_1}| = \aleph_0$.
Thus, Duplicator loses in the second round, so $E_0 \not \equiv_{\emb}^2 E_1$. 
\end{example}

We are going to use the embedding game in order to prove the following theorem:
\begin{theorem}\label{for_each}
For each $n<\omega$, there is a first-order sentence $\varphi_n$ of quantifier rank $n$ that is not expressible by any $\mathcal{L}_{\infty \omega}(\mathcal{Q}_{\emb})$-sentence of quantifier rank $<n$ and is of the form
\[
	\varphi_n = Q_n x_n \cdots Q_1 x_1 \vartheta(x_1,\dots,x_n)
\]
where 
\[
	Q_n = \begin{cases}
	\forall \text{ if } n \text{ is odd},\\
	\exists \text{ if } n \text{ if even},
\end{cases}
\]
%$Q_n = \exists$ if $n$ is odd, $Q_n = \forall$ if $n$ is even, 
and $\vartheta$ is quantifier-free.
\end{theorem}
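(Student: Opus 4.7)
The plan is to invoke Theorem \ref{game} (the embedding-game characterization). For each $n$, I will construct structures $\mathfrak{A}_n, \mathfrak{B}_n$ and an FO sentence $\varphi_n$ in prenex form with $n$ alternating quantifiers such that (i) $\mathfrak{A}_n \vDash \varphi_n$ and $\mathfrak{B}_n \nvDash \varphi_n$, and (ii) Duplicator has a winning strategy in the embedding game of length $n-1$ on $(\mathfrak{A}_n, \mathfrak{B}_n)$. By Theorem \ref{game}, (ii) gives $\mathfrak{A}_n \equiv_{\emb}^{n-1} \mathfrak{B}_n$, so $\varphi_n$ cannot be equivalent to any $\mathcal{L}_{\infty\omega}(\mathcal{Q}_{\emb})$-sentence of quantifier rank $<n$, which is exactly what we need.

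The case $n=1$ is essentially free: in a vocabulary with a single unary predicate $P$ and no constants, the only quantifier-free sentences are logically equivalent to $\top$ or $\bot$, so $\varphi_1 := \forall x_1\, P(x_1)$ is not equivalent to any QR-$0$ sentence, as witnessed by any two $\{P\}$-structures disagreeing on it. For $n=2$ I would use the vocabulary $\{\leq\}$ of linear orders and take $\mathfrak{A}_2 = ([0,1] \cap \mathbb{Q}, \leq)$ and $\mathfrak{B}_2 = (\mathbb{Q}, \leq)$ with $\varphi_2 := \exists x_2\, \forall x_1\, (x_2 \leq x_1)$. These are bi-embeddable (each embeds into an open interval of the other), so Duplicator wins the $1$-round embedding game; but if Spoiler then picks the minimum $0 \in \mathfrak{A}_2$, no embedding $\mathbb{Q} \to [0,1] \cap \mathbb{Q}$ can send the matched pre-image to $0$, since elements of $\mathbb{Q}$ strictly below that pre-image would have to map below $0$. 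So $\mathfrak{A}_2 \not\equiv_{\emb}^2 \mathfrak{B}_2$, while $\varphi_2$ is of the required form.

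For general $n$, the idea is to iterate: build $\mathfrak{A}_n, \mathfrak{B}_n$ so that the asymmetry is hidden $n-1$ quantifiers deep. A natural candidate is a linear order together with nested unary predicates $U_1 \supseteq U_2 \supseteq \dots \supseteq U_{n-1}$ picking out a cascade of intervals, with $\mathfrak{A}_n$ equipped with a leftmost element in the innermost $U_{n-1}$ and $\mathfrak{B}_n$ without one. The alternating prenex sentence $\varphi_n$ asserts, by alternating $\exists/\forall$ down the levels, that the cascade is fully populated (for odd $n$ the outermost quantifier is $\forall$, for even $n$ it is $\exists$, as required).

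The core technical step, and the main obstacle, is verifying (ii) for general $n$: an explicit winning strategy for Duplicator for $n-1$ rounds. Unlike the classical EF game, Duplicator must commit to \emph{global} embeddings of the full structures at each round rather than picking individual elements, so the strategy has to be uniform over all future Spoiler responses. The construction must therefore be chosen quasi-homogeneous (cf.\ Theorem \ref{homog}) at every level above the deepest, so that after any $k<n$ rounds of play Duplicator can still produce fresh global embeddings extending the matched tuples; the induction is then straightforward on $k$, since at level $k < n-1$ both structures remain identical up to that level, and only Spoiler's probe into the innermost level at round $n$ reveals the asymmetry.
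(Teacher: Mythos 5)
Your high-level plan coincides with the paper's: reduce the claim to the embedding game via Theorem \ref{game}, and for each $n$ exhibit a pair of structures that disagree on a rank-$n$ prenex sentence yet are $\equiv^{n-1}_{\emb}$. Your $n=1$ and $n=2$ cases are correct (the $[0,1]\cap\mathbb{Q}$ versus $\mathbb{Q}$ example works: bi-embeddability gives one round, and no embedding of $\mathbb{Q}$ into $[0,1]\cap\mathbb{Q}$ can hit the endpoint $0$, so Duplicator dies in round two). But for general $n$ there is a genuine gap, in two places. First, the construction you sketch --- a single linear order with nested unary predicates $U_1\supseteq\dots\supseteq U_{n-1}$ and the asymmetry being whether the innermost set has a least element --- does not produce a property of quantifier rank $n$: ``$U_{n-1}$ has a minimum'' is expressible with two quantifiers no matter how deep the nesting. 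To force $n$ alternations you need genuine branching at every level, which is why the paper's gadget is a \emph{tree}: a dense linear order of ``children'' at each level, linked to the level below by binary relations $E_i$, with all children of one type except (at most) one exceptional child one level down. Nested unary predicates cannot encode this.

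Second, the step you defer as ``straightforward'' is precisely where all the work lies. Appealing to quasi-homogeneity cannot close it: if both structures were quasi-homogeneous and bi-embeddable they would be fully $\equiv_{\emb}$-equivalent (by the lemma following Theorem \ref{infinite}), so no first-order sentence could separate them at all; the structures must fail quasi-homogeneity, and the argument must control exactly where. The paper does this with partition lemmas (Lemmas \ref{natural_number} and \ref{structure}): the chosen embeddings $f,g$ have the property that for \emph{any} finite tuple played by Spoiler one can split each structure into a part containing the tuple, on which the embedding is an isomorphism, and a complementary part that is again a fresh copy of the relevant gadget one level down, with the two pieces gluing back into global embeddings. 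This ``fresh copy survives every finite probe'' property is what drives the induction on the number of rounds, and it is not supplied by your sketch; without it the claim that Duplicator can keep producing global embeddings consistent with all previously matched elements is unsubstantiated.
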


For the remaining part of this text we will denote by $\eta$ the usual ordering of rational numbers. 
The following lemma is a well-known fact.

\begin{lemma}\label{order_isom}
Every open segment of $\eta$ is isomorphic to $\eta$.
\end{lemma}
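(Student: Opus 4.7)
The plan is to recognize this as an instance of Cantor's classical theorem that any two countable dense linear orders without endpoints are isomorphic, and to verify the hypotheses.

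First I would make precise what an \emph{open segment} of $\eta$ means: a suborder of the form $(a,b) \cap \mathbb{Q}$, where $a, b \in \mathbb{Q} \cup \{-\infty, +\infty\}$ and $a < b$. I would check that such a set, with the order inherited from $\eta$, is (i) countable, being a subset of $\mathbb{Q}$; (ii) densely ordered, since between any two rationals strictly between $a$ and $b$ the rational midpoint lies again strictly between $a$ and $b$; and (iii) without endpoints, since for any rational $q \in (a,b)$ one can find rationals in $(a,q)$ and in $(q,b)$ (using $a < q < b$ together with density of $\mathbb{Q}$ in itself, with obvious special treatment when $a = -\infty$ or $b = +\infty$). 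These three properties are exactly those satisfied by $\eta$ itself.

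Next I would carry out a back-and-forth construction to produce an explicit order isomorphism $f \colon \mathbb{Q} \to (a,b) \cap \mathbb{Q}$. Enumerate $\mathbb{Q} = \{p_0, p_1, \dots\}$ and $(a,b) \cap \mathbb{Q} = \{q_0, q_1, \dots\}$. Build a sequence of finite order-preserving partial maps $f_0 \subseteq f_1 \subseteq \cdots$ with $f_0 = \emptyset$; at stage $2n+1$, if $p_n \notin \dom(f_{2n})$, extend $f_{2n}$ by sending $p_n$ to some element of $(a,b) \cap \mathbb{Q}$ placed in the correct interval determined by the already-chosen images of rationals immediately below and above $p_n$; such an element exists by properties (ii) and (iii) of $(a,b) \cap \mathbb{Q}$ above. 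At stage $2n+2$, perform the symmetric \emph{back} step to guarantee $q_n \in \ran(f_{2n+2})$, again using density and lack of endpoints of $\mathbb{Q}$. Setting $f = \bigcup_n f_n$ yields an order-preserving bijection, i.e.\ the desired isomorphism.

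The only minor subtlety is the bookkeeping that ensures each extension step actually has room to pick an image or preimage in the correct open interval; this is handled uniformly by the fact that both orders are dense without endpoints, so every nonempty open interval in either order is nonempty. There is no essential obstacle, so I would keep the exposition brief and refer to Cantor's back-and-forth argument as standard.
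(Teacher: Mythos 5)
Your proof is correct: the paper states this lemma without proof, citing it as a well-known fact, and your argument is precisely the standard one, namely verifying that any open segment of $\eta$ is a countable dense linear order without endpoints and then invoking (or re-deriving) Cantor's back-and-forth isomorphism theorem. Nothing further is needed.
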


\begin{definition}
In the text of this definition we will assume that all structures are disjoint unless mentioned otherwise.
For every natural number $n \geq 1$, we define the vocabulary $\tau_n = \{P_0,\dots,P_n,\leq,E_1,\dots,E_n\}$ where all $P_i$ are unary and $E_i$, $\leq$ binary relation symbols, and 
%$\tau_n^* = \tau_n \cup \{C_{\alpha} : \alpha \in \omega^{\omega}\}$
$\tau_n^* = \tau_n \cup \{C_1,\dots,C_n\}$ 
where all $C_i$ are unary relation symbols.
Our aim is to define classes $S_n, T_n \subset \Str[\tau_n^*]$ such that $(\mathfrak{A} \upharpoonright \tau_n) \equiv_{\emb}^n (\mathfrak{B} \upharpoonright \tau_n)$ but 
$(\mathfrak{A} \upharpoonright \tau_n) \not \equiv_{\mathcal{L}_{\omega\omega}}^{n+1} (\mathfrak{B} \upharpoonright \tau_n)$ for all $\mathfrak{A} \in S_n$ and $\mathfrak{B} \in T_n$.

$\textbf{n = 1:}$
First we define class $R_1 \subset \Str[\tau_1^*]$ by setting $\mathfrak{A} \in R_1$ if and only if there are disjoint structures $\mathfrak{B}_0$, $\mathfrak{B}_1 \in \Str[\{\leq\}]$ isomorphic to $\eta$ such that 
$\mathfrak{A} \upharpoonright \{\leq\} = \mathfrak{B}_0 \cup \mathfrak{B}_1$ and 
$P_i^{\mathfrak{A}} = B_i$ for $i = 0,1$.
In what follows, we will use $\eta^{\mathfrak{A}}_i$ to denote $\mathfrak{B}_i$ for $i = 0,1$. Note that $\eta_i^{\mathfrak{A}} = (\mathfrak{A} \upharpoonright \{\leq\}) | P_i^{\mathfrak{A}}$.
Finally we set
\[
	S_1 = \{\mathfrak{A}\in R_1 : E_1^{\mathfrak{A}} \text{ is an isomorphism between } \eta^{\mathfrak{A}}_1 \text{ and } \eta^{\mathfrak{A}}_0 \}
\]
and
\begin{align*}
	T_1 = \{\mathfrak{A}\in R_1 : &\  C_1^{\mathfrak{A}} = \{a\} \text{ for some } a \in P^{\mathfrak{A}}_1 \text{ and } \\
	& \text{ there is an isomorphism } h \text{ between }\\ &\ \eta^{\mathfrak{A}}_1 \text{ and }\eta^{\mathfrak{A}}_0 \text{ such that } E_1^{\mathfrak{A}} = h \setminus \{(a,h(a))\}.
\end{align*}

\includegraphics[scale = 0.8]{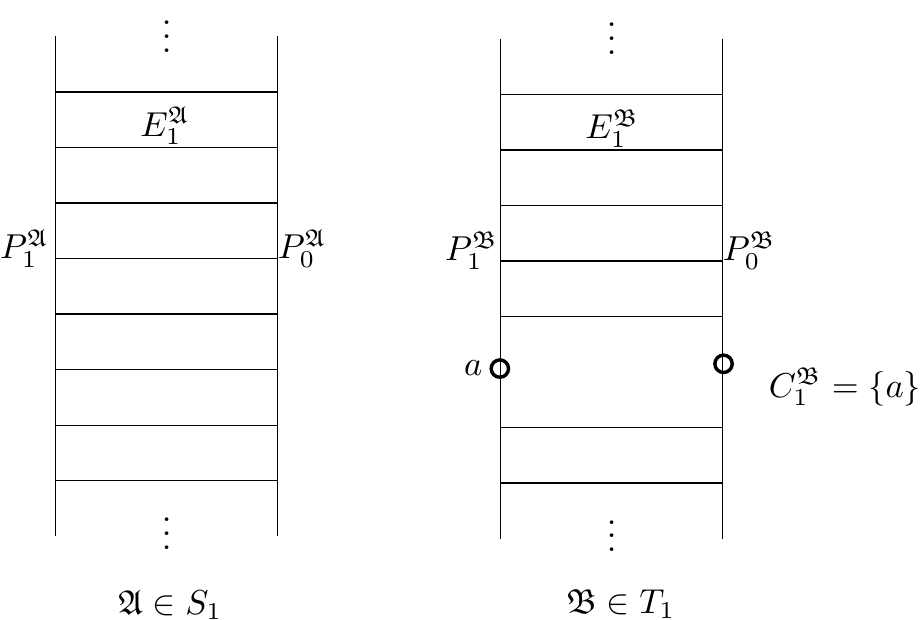}

$\textbf{n > 1:}$ We define class $R_n \subset \Str[\tau_n^*]$ by setting $\mathfrak{A} \in R_n$ if and only if there are structures $\mathfrak{B} \in \Str[\{\leq\}]$ and $\mathfrak{M}_a \in \Str[\tau^*_{n-1}]$ for each $a \in B$ such that
\[
	\mathfrak{A} \upharpoonright \tau^*_{n-1}  = \mathfrak{B} \cup \bigcup_{a \in B}\mathfrak{M}_a,
\] 
$P_n^{\mathfrak{A}} = B$, $C_n^{\mathfrak{A}} = \{a\}$ for some $a \in P_n^{\mathfrak{A}}$, and
\[
	E_n^{\mathfrak{A}} = \{(a,b) \in A^2 : a \in P_n^{\mathfrak{A}} \text{ and } b \in M_a \}.
\]
If $n$ is even then we set $\mathfrak{A}\in S_n$ if and only if $\mathfrak{A} \in R_n$ and for all $a\in P^{\mathfrak{A}}_n$,
\begin{align*}
	\mathfrak{M}_a \in T_{n-1} &\text{ if } a \notin C^{\mathfrak{A}}_n,\\
	\mathfrak{M}_a \in S_{n-1} &\text{ if } a \in C^{\mathfrak{A}}_n,
\end{align*}
and $\mathfrak{A} \in T_n$ if and only if $\mathfrak{A} \in R_n$ and for all $a \in P_n^{\mathfrak{A}}$ we have
$\mathfrak{M}_a \in T_{n-1}$.
If $n$ is odd then we set $\mathfrak{A} \in S_n$ if and only if $\mathfrak{A} \in R_n$ and for all $a \in P^{\mathfrak{A}}_n$ we have $\mathfrak{M}_a \in S_{n-1}$, and $\mathfrak{A} \in T_n$ if and only if $\mathfrak{A} \in R_n$ and for all $a \in P^{\mathfrak{A}}_a$,
\begin{align*}
	\mathfrak{M}_a \in S_{n-1} &\text{ if } a \notin C^{\mathfrak{A}}_n,\\
	\mathfrak{M}_a \in T_{n-1} &\text{ if } a \in C^{\mathfrak{A}}_n.
\end{align*}
\includegraphics[scale = 0.8]{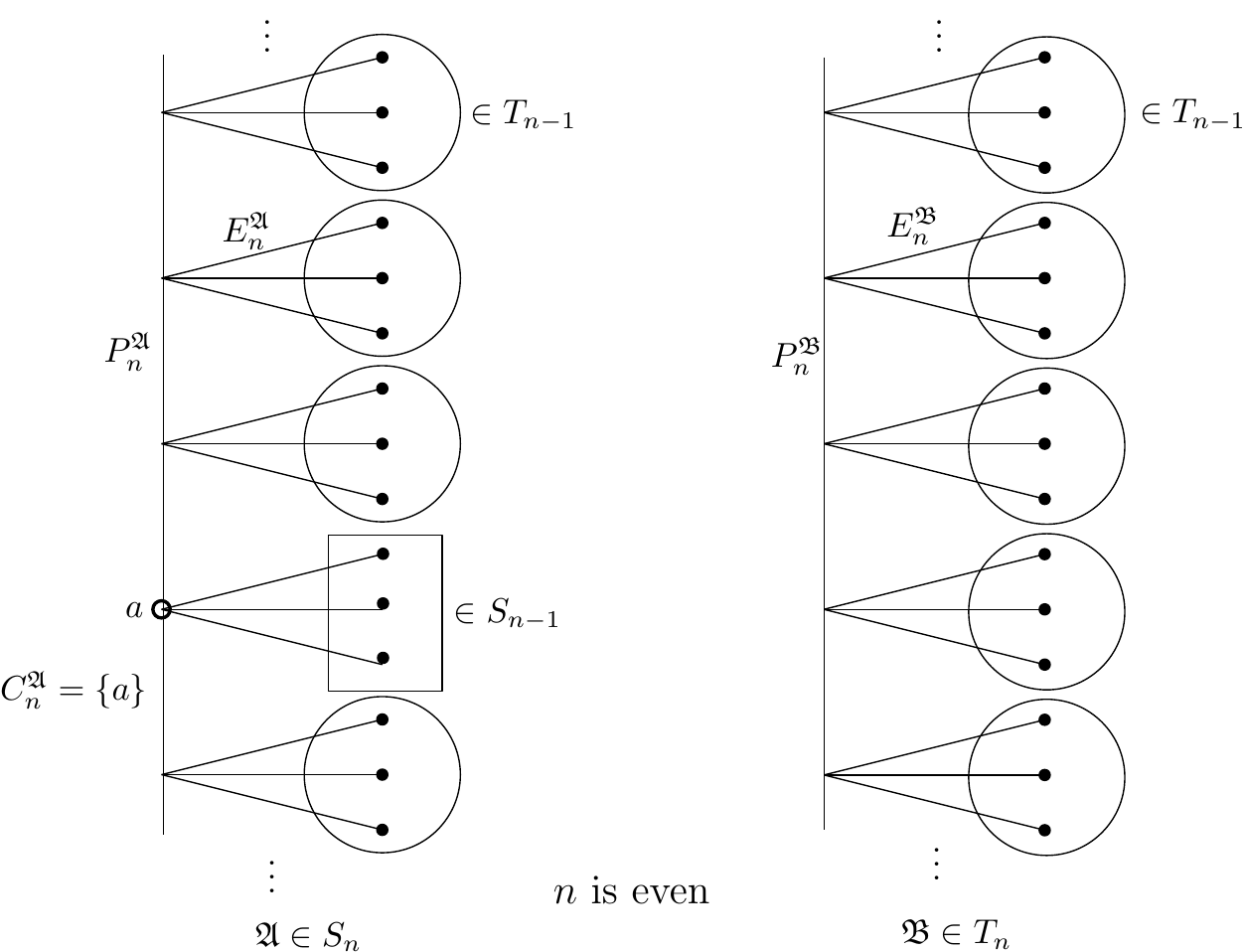}

\includegraphics[scale = 0.8]{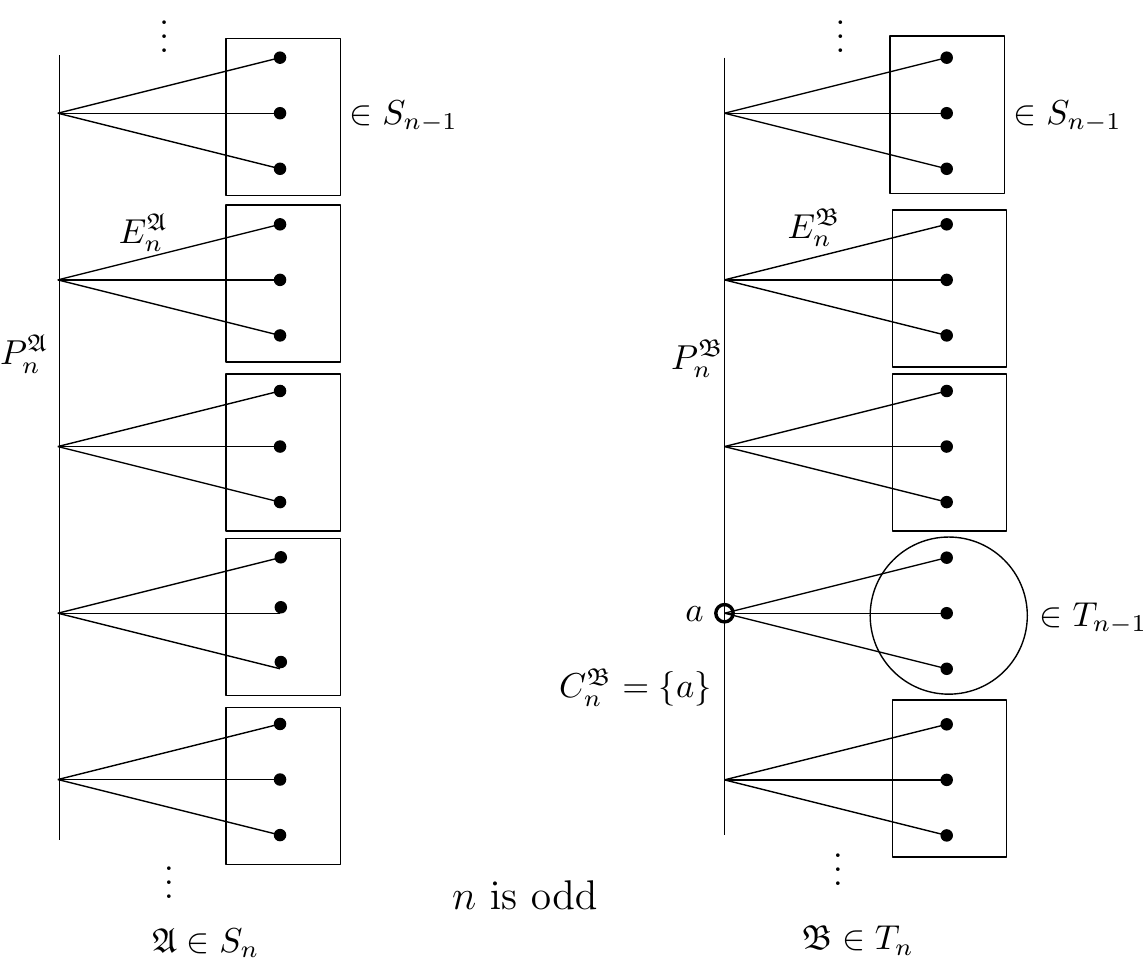}

\end{definition}

\begin{lemma}\label{exactly_one}
Let $n\geq 1$ be a natural number. Both $S_n$ and $T_n$ each have exactly one structure up to isomorphism. 
\end{lemma}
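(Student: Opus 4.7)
The plan is to prove both claims simultaneously by induction on $n$, relying on the homogeneity of the rational line $\eta$ together with Lemma~\ref{order_isom}.

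For the base case $n = 1$, given $\mathfrak{A}, \mathfrak{A}' \in S_1$ I would pick any order-iso\-morphism $\pi_1 \colon \eta_1^{\mathfrak{A}} \to \eta_1^{\mathfrak{A}'}$ (both being copies of $\eta$) and then force the map on the other side by conjugating with the two $E_1$-iso\-mor\-phisms: $\pi_0 := E_1^{\mathfrak{A}'} \circ \pi_1 \circ (E_1^{\mathfrak{A}})^{-1}$. The composite $\pi_0$ is automatically an order-iso, and $\pi_0 \cup \pi_1$ preserves $E_1$ by construction. For $T_1$ the same recipe works provided $\pi_1$ is chosen to send the unique point of $C_1^{\mathfrak{A}}$ to that of $C_1^{\mathfrak{A}'}$; homogeneity of $\eta$ supplies such a $\pi_1$, and then the pair missing from $E_1^{\mathfrak{A}}$ automatically matches the pair missing from $E_1^{\mathfrak{A}'}$, so the $E_1$-condition is preserved.

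For the inductive step I would assume that $S_{n-1}$ and $T_{n-1}$ each contain a single isomorphism type, say $\mathfrak{S}_{n-1}$ and $\mathfrak{T}_{n-1}$. Take $\mathfrak{A}, \mathfrak{A}' \in S_n$ (the argument for $T_n$ and for both parities of $n$ is entirely analogous). The indexing linear order $\mathfrak{B}$ carries a unique distinguished point $a \in C_n^{\mathfrak{A}}$ (resp.\ $a' \in C_n^{\mathfrak{A}'}$); by homogeneity of $\eta$ there is an order-iso $\pi \colon \mathfrak{B} \to \mathfrak{B}'$ with $\pi(a) = a'$. For each $b \in B$, the class ($S_{n-1}$ or $T_{n-1}$) in which $\mathfrak{M}_b$ lives is determined by whether $b = a$ and by the parity of $n$; by the definition of $S_n$, the structure $\mathfrak{M}'_{\pi(b)}$ lives in the same class, so the inductive hypothesis furnishes an iso $\sigma_b \colon \mathfrak{M}_b \to \mathfrak{M}'_{\pi(b)}$. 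Gluing $\pi$ with the family $(\sigma_b)_{b \in B}$ yields a $\tau_n^*$-iso from $\mathfrak{A}$ to $\mathfrak{A}'$; the relation $E_n$ is preserved precisely because $\sigma_b$ maps $\mathfrak{M}_b$ onto $\mathfrak{M}'_{\pi(b)}$ and $\pi$ matches the distinguished point, while $C_n$ and $P_n$ are preserved by the choice of $\pi$.

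The one ingredient that deserves scrutiny is the implicit assumption that the base order $\mathfrak{B}$ in the definition of $R_n$ is itself isomorphic to $\eta$ — this is explicit for $n = 1$ but, for $n > 1$, is presumably intended via the accompanying figures rather than the textual definition. I expect this to be the main subtlety to nail down; once it is secured the induction runs cleanly, with no further obstacle.
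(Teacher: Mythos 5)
Your induction is correct in substance, and there is in fact nothing in the paper to compare it against: Lemma~\ref{exactly_one} is stated without any proof, so your argument fills a genuine gap rather than duplicating one. Your plan --- fix an order-isomorphism of the top-level copies of $\eta$ respecting the distinguished $C_n$-point, then glue in isomorphisms of the attached structures supplied by the induction hypothesis --- is clearly what the authors intend, and the one subtlety you flag is exactly the right one: as literally written, the definition of $R_n$ for $n>1$ only asks for \emph{some} $\mathfrak{B}\in\Str[\{\leq\}]$, under which reading the lemma is simply false; the proofs of Lemmas~\ref{natural_number}--\ref{we_have} (which invoke Lemma~\ref{order_isom} and write $\eta^{\mathfrak{B}}_n$) confirm that $\mathfrak{B}\cong\eta$ is intended. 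Two small points to tighten. First, in the $T_1$ case the map $(E_1^{\mathfrak{A}})^{-1}$ is not total --- it is undefined at the unique point of $P_0^{\mathfrak{A}}$ outside $\ran(E_1^{\mathfrak{A}})$ --- so you should conjugate by the full isomorphisms $h,h'$ (each recoverable from $E_1$ as $E_1$ together with the one missing pair) and set $\pi_0=h'\circ\pi_1\circ h^{-1}$; your remark that the missing pairs "automatically match" is then literally the verification that $E_1$ is preserved. Second, the definitions leave the interpretations of $C_1,\dots,C_{n-1}$ (and $C_1$ in $S_1$) unconstrained except at the top level, so "exactly one structure up to isomorphism" again only holds under the evident intended reading that the unused $C_i$ are empty; like the $\mathfrak{B}\cong\eta$ issue, this is a defect of the paper's definition rather than of your proof, and is harmless for the applications since all later lemmas pass to the reducts $\mathfrak{A}\upharpoonright\tau_n$.
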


\begin{proposition}\label{suppose}
Suppose $n \geq 1$ is a natural number.
Let $\mathfrak{A} \in S_n$ and $\mathfrak{B} \in T_n$.
There is an $\mathcal{L}_{\omega\omega}[\tau_n]$-sentence $\varphi_n$ of quantifier rank $n+1$ such that $\mathfrak{A} \vDash \varphi_n$ and $\mathfrak{B} \nvDash \varphi_n$.
\end{proposition}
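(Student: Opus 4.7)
The plan is to construct $\varphi_n$ explicitly by induction on $n$, with each step adding one layer of quantification that probes the top level $P_n$ and relativizes the previously constructed sentence to the fiber attached to a chosen element.

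For the base case $n=1$, I would take $\varphi_1 := \forall x\bigl(P_1(x) \to \exists y\, E_1(x,y)\bigr)$, which has quantifier rank $2 = n+1$. In any $\mathfrak{A} \in S_1$ the relation $E_1^{\mathfrak{A}}$ is a total isomorphism $\eta_1^{\mathfrak{A}} \to \eta_0^{\mathfrak{A}}$, so every element of $P_1$ has an $E_1$-image and $\mathfrak{A} \vDash \varphi_1$. In any $\mathfrak{B} \in T_1$ the unique element $a \in C_1^{\mathfrak{B}}$ has had its pair $(a,h(a))$ removed, so $a$ witnesses $\mathfrak{B} \nvDash \varphi_1$.

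For the inductive step, assume $\varphi_{n-1}$ of rank $n$ separates $S_{n-1}$ from $T_{n-1}$. Define the relativization $\varphi_{n-1}^x$ by replacing every subformula $\exists y\,\alpha$ by $\exists y(E_n(x,y) \wedge \alpha^*)$ and every subformula $\forall y\,\alpha$ by $\forall y(E_n(x,y) \to \alpha^*)$, leaving atomic subformulas untouched. Then set
\[
	\varphi_n := \begin{cases} \exists x\bigl(P_n(x) \wedge \varphi_{n-1}^x\bigr) & \text{if } n \text{ is even}, \\ \forall x\bigl(P_n(x) \to \varphi_{n-1}^x\bigr) & \text{if } n \text{ is odd}, \end{cases}
\]
which has quantifier rank $\qr(\varphi_{n-1}) + 1 = n+1$. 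The key auxiliary lemma, proved by a routine induction on the structure of $\varphi_{n-1}$, is that for any $\mathfrak{A} \in R_n$ and $a \in P_n^{\mathfrak{A}}$ one has $\mathfrak{A} \vDash \varphi_{n-1}^x[a]$ if and only if $\mathfrak{M}_a \vDash \varphi_{n-1}$. This uses that the universe of $\mathfrak{M}_a$ coincides with $\{b \in A : E_n(a,b)\}$, together with the fact that the $\mathfrak{M}_a$ are pairwise disjoint, so that every $\tau_{n-1}$-atomic formula in parameters from $M_a$ evaluates identically in $\mathfrak{A}$ and in $\mathfrak{M}_a$.

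Granting that lemma, the conclusion splits by parity. If $n$ is even: the distinguished $a^* \in C_n^{\mathfrak{A}}$ satisfies $\mathfrak{M}_{a^*} \in S_{n-1}$, hence by induction $\mathfrak{M}_{a^*} \vDash \varphi_{n-1}$ and $\mathfrak{A} \vDash \varphi_n$; every $a \in P_n^{\mathfrak{B}}$ has $\mathfrak{M}_a \in T_{n-1}$, so no element witnesses the existential and $\mathfrak{B} \nvDash \varphi_n$. If $n$ is odd: every $a \in P_n^{\mathfrak{A}}$ has $\mathfrak{M}_a \in S_{n-1}$ so $\mathfrak{A} \vDash \varphi_n$, while the distinguished $a^* \in C_n^{\mathfrak{B}}$ falsifies the universally quantified formula via $\mathfrak{M}_{a^*} \in T_{n-1}$. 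The only real work is the relativization lemma, and the only thing to be careful about is that the disjoint-union convention in the definition of $R_n$ genuinely ensures that no $\tau_{n-1}$-relation crosses fibers, which is what lets atomic truth survive the restriction to $\{y : E_n(x,y)\}$.
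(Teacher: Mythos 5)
Your proposal is correct and follows essentially the same route as the paper's proof: the same base sentence $\forall x(P_1(x)\to\exists y\,E_1(x,y))$ (the paper adds a harmless conjunct $P_0$), and the same inductive step relativizing $\varphi_{n-1}$ to the fiber $\{y : E_n(x,y)\}$ with the quantifier alternating by parity. The relativization lemma you isolate is exactly what the paper's notation $\varphi_{n-1}^{\{y:E_n(x,y)\}}$ leaves implicit, so your write-up is just a more detailed version of the same argument.
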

\begin{proof}
If $n = 1$ then we can set $\varphi_1 = \forall x (P_1(x)\rightarrow \exists y (P_0(x) \wedge E_1(x,y)))$.
Assume that $n > 1$ and such $\varphi_{n-1}$ exists.
Then if $n$ is even we can set $\varphi_n = \exists x (P_n(x) \wedge \varphi_{n-1}^{\{y : E_n(x,y)\}})$,
and if $n$ is odd we can set $\varphi_n = \forall x (P_n(x)\rightarrow \varphi_{n-1}^{\{y : E_n(x,y)\}})$.
\end{proof}

\begin{lemma}\label{natural_number}
Let $n \geq 1$ be a natural number. 
Let $\mathfrak{A}' \in S_n$, $\mathfrak{B}'\in T_n$ if $n$ is odd, and
$\mathfrak{A}'\in T_n$, $\mathfrak{B}' \in S_n$ if $n$ is even.
Set $\mathfrak{A} = \mathfrak{A}'\upharpoonright \tau_n$ and $\mathfrak{B} = \mathfrak{B}'\upharpoonright \tau_n$.
There is an embedding $f$ of $\mathfrak{A}$ into $\mathfrak{B}$ such that for any $\overline{a} \in A^{<\omega}$ there are partitions $(A_1,A_2)$ of $A$ and $(B_1,B_2)$ of $B$ such that
\begin{enumerate}
\item $\overline{a} \in A_1^{<\omega}$,
\item $f | A_1$ is an isomorphism between $\mathfrak{A} | A_1$ and $\mathfrak{B} | B_1$,
\item $\mathfrak{A}' | A_2 \cong \mathfrak{A}'$ and $\mathfrak{B}'|B_2 \cong \mathfrak{B}'$,
\item for every embedding $g :\mathfrak{A}|A_2\rightarrow \mathfrak{B}|B_2$ and $h : \mathfrak{B} |B_2\rightarrow \mathfrak{A}|A_1$ it is the case that $(f|A_1) \cup g$ and $(f^{-1}|B_1) \cup h$ are embeddings $\mathfrak{A}\rightarrow\mathfrak{B}$ and $\mathfrak{B}\rightarrow\mathfrak{A}$, respectively,
\item for any $m < \omega$, we have $(\mathfrak{A},\overline{a}) \equiv^m_{\emb} (\mathfrak{B},f\overline{a})$ if and only if $\mathfrak{A} \equiv^m_{\emb} \mathfrak{B}$.
\end{enumerate}
\end{lemma}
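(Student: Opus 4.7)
The plan is to prove the lemma by induction on $n \geq 1$, constructing $f$, the partitions, and verifying the five conditions simultaneously.

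For the base case $n = 1$, the structure $\mathfrak{A}$ consists of two disjoint copies of $\eta$ tied by a complete matching, while $\mathfrak{B}$ is the same but with one matched pair broken, marked by $C_1^{\mathfrak{B}'}$. I would exploit self-similarity of $\eta$: it admits a decomposition into two dense subsets each order-isomorphic to $\eta$, and such decompositions can be arranged to place any given finite set in either chosen piece. First pick $f\colon \mathfrak{A}\to\mathfrak{B}$ whose image avoids the two unmatched elements of $\mathfrak{B}$ while leaving a dense complement in $\mathfrak{B}$ (using Lemma~\ref{order_isom}); the complement $B\setminus f(A)$ then inherits the unmatched pair and is a copy of $\mathfrak{B}'$ in $\tau_1^*$. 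Given $\overline{a}$, partition $A = A_1 \sqcup A_2$ so that $\overline{a}\in A_1$ and both pieces are copies of $\mathfrak{A}'$ in $\tau_1^*$; set $B_1 = f(A_1)$ and $B_2 = B\setminus B_1$, and verify conditions 1--3 directly from the construction.

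For the inductive step, $\mathfrak{A}', \mathfrak{B}' \in R_n$ have a spine $\cong \eta$ carrying $P_n$, with a level-$(n-1)$ leaf $\mathfrak{M}_a \in S_{n-1}\cup T_{n-1}$ attached to each spine element $a$ through $E_n$. The recursive definitions of $S_n, T_n$ align corresponding leaves of $\mathfrak{A}'$ and $\mathfrak{B}'$ into the exact shape covered by the induction hypothesis. I would apply the hypothesis leafwise to obtain leaf embeddings together with their partitions, and paste these with an $\eta$-decomposition of the spine (as in the base case) to produce $f$. For given $\overline{a}$, split the spine so that the spine projections of $\overline{a}$ lie in $A_1$, apply inductive partitions within the finitely many leaves hit by $\overline{a}$, and send the remaining leaves wholesale into $A_1$ or $A_2$ so that $\mathfrak{A}'|A_2 \cong \mathfrak{A}'$ and $\mathfrak{B}'|B_2 \cong \mathfrak{B}'$ in $\tau_n^*$.

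The main obstacle is condition~4: for every embedding $g\colon \mathfrak{A}|A_2 \to \mathfrak{B}|B_2$, the combined map $(f|A_1) \cup g$ must be a genuine embedding $\mathfrak{A}\to\mathfrak{B}$, and symmetrically for $h$. This forces the partition to respect the inter-piece behaviour of every relation of $\mathfrak{A}$: the spine orders $\leq$ and the matching relations $E_k$ between $A_1$-side and $A_2$-side elements must interleave in exactly the same pattern as between $B_1$-side and $B_2$-side elements under $f$. The base-case and inductive constructions above must therefore be chosen so that the $A_1/A_2$ split mirrors the $B_1/B_2$ split along each spine at every level. Once conditions 1--4 are in place, condition~5 follows by a Feferman--Vaught-style argument: a Duplicator strategy in the embedding game of length $m$ on $\mathfrak{A}$ versus $\mathfrak{B}$ transfers to one on $(\mathfrak{A},\overline{a})$ versus $(\mathfrak{B}, f\overline{a})$ by playing $f|A_1$ on the parameter side and using the original strategy on the fresh copies inside $A_2$ and $B_2$, with condition~4 ensuring that the pieced-together moves remain legal embeddings throughout the play; the converse is obtained by restricting any parameterized strategy to the $A_2/B_2$ pieces, which by condition~3 are isomorphic copies of the original structures.
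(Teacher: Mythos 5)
Your overall architecture---build $f$ from a spine map plus leaf maps, choose the partition after seeing $\overline{a}$, and derive condition~5 from conditions~1--4 by gluing strategies---matches the paper, but two of your concrete choices break down. First, in the base case you split $\eta$ into two \emph{dense} interleaved copies and place $\overline{a}$ in one of them. That is incompatible with condition~4: if $A_1$ and $A_2$ interleave densely, then for $x_1<y<x_2$ with $x_1,x_2\in A_1$ and $y\in A_2$ the map $(f|A_1)\cup g$ preserves $\leq$ only when $g(y)$ lands between $f(x_1)$ and $f(x_2)$, and an \emph{arbitrary} embedding $g\colon\mathfrak{A}|A_2\rightarrow\mathfrak{B}|B_2$ will not do this (it may, for instance, shift its image into a final segment of $B_2$). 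You correctly flag condition~4 as the main obstacle, but you never resolve it, and your decomposition is precisely what makes it unresolvable. The paper instead cuts at a matched pair $(b,c)$ dominating all coordinates of $\overline{a}$, takes $A_1$ to be the initial segment below the cut and $A_2$ the complementary final segment, and chooses $f$ to map $\eta_1^{\mathfrak{A}}$ isomorphically onto the initial segment of $\eta_1^{\mathfrak{B}}$ below the marked point $C_1^{\mathfrak{B}'}$; then there are no $E_1$-edges between the blocks and every cross-instance of $\leq$ is satisfied by \emph{any} $g$, so condition~4 (and with it~5) is automatic.

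Second, your inductive step ``apply the induction hypothesis leafwise'' has a direction problem. The only leaf at which $\mathfrak{A}'$ and $\mathfrak{B}'$ differ in type is the marked leaf $\mathfrak{M}_{b_n}$ of $\mathfrak{B}'$: for $n$ even, all leaves of $\mathfrak{A}'\in T_n$ lie in $T_{n-1}$ while $\mathfrak{M}_{b_n}\in S_{n-1}$. The induction hypothesis for $n-1$ supplies an embedding of the $S_{n-1}$-structure into the $T_{n-1}$-structure, which is the \emph{wrong} direction for sending a $T_{n-1}$ leaf of $\mathfrak{A}$ into that $S_{n-1}$ leaf. The paper sidesteps the marked leaf entirely: it maps the spine $\eta_n^{\mathfrak{A}}$ isomorphically onto $\xi_n=\{x : x<b_n\}$ (using Lemma~\ref{order_isom}), so the image meets only leaves of $\mathfrak{B}$ of the \emph{same} type as those of $\mathfrak{A}$, and every leaf map can be taken to be an isomorphism by Lemma~\ref{exactly_one}. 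Consequently no induction on $n$ is needed to build $f$ in this lemma; the genuinely inductive content lives in Lemma~\ref{structure} and Proposition~\ref{let}. To repair your argument you should replace the dense-codense split by the initial-segment split and replace ``leafwise induction hypothesis'' by ``avoid the marked leaf and use leafwise isomorphisms.''
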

\begin{proof}
Recall first that for each natural number $n\geq 1$, there is an element $b_n \in B$ such that $C^{\mathfrak{B}'}_n = \{b_n\}$.
We use this element to define the ordering $\xi_n = \eta^{\mathfrak{B}}_n | \{x \in B : x  <b_n \}$.
It is easy to see by using Lemma \ref{order_isom} that for all $n\geq 1$ there is an isomorphism $h_n$ between $\eta^{\mathfrak{A}}_n$ and $\xi_n$.

We have three possible cases to consider: $n=1$, $n > 1$ is even and $n > 1$ is odd.
First suppose that $n = 1$. Then $\mathfrak{A}' \in S_1$ and $\mathfrak{B}' \in T_1$.
Define function $f : A \rightarrow B$ by setting $f(x) = h_1(x)$ if $x \in P_1^{\mathfrak{A}}$, and $f(x) = y$ if $x \in P^{\mathfrak{A}}_0$ where $y \in P^{\mathfrak{B}}_0$ is such that there is $z \in P^{\mathfrak{A}}_1$ with $\mathfrak{A} \vDash E_1(z,x)$ and $\mathfrak{B} \vDash E_1(h_1(z),y)$.
It is easy to see that $f$ is an embedding of $\mathfrak{A}$ into $\mathfrak{B}$.
Let $a_0,\dots,a_k \in A$. We must find partitions satisfying conditions 1.-5.
For this, let $b, c \in A$ be such that 
\begin{align*}
	\mathfrak{A} \vDash &P_1(b) \wedge P_0(c) \wedge E_1(b,c) \wedge \\
	&\bigwedge_{i\leq k}\big(\big(P_1(a_i) \rightarrow a_i<b\big) \wedge \big(P_0(a_i)\rightarrow a_i < c\big)\big).
\end{align*}
Set $A_1 = \{x \in A : x < b \text{ or } x < c \}$, $B_1 = f[A_1]$, and 
denote by $A_2$, $B_2$ the complements of $A_1$, $B_1$, respectively.
It is straigthforward to verify that partititions $(A_1,A_2)$, $(B_1,B_2)$ satisfy all the five requirements. 
Notice that the fact 5. follows directly from the fact 4.
Thus, the case $n = 1$ is proved. 

Assume now that $n > 1$ is even. Then $\mathfrak{A}' \in T_n$ and $\mathfrak{B}' \in S_n$, and
we have $\mathfrak{M}_x \in T_{n-1}$ for all $x \in P^{\mathfrak{A}}_n$ and $\mathfrak{M}_x \in T_{n-1}$ for all $x \in \xi_n$ so, by Lemma \ref{exactly_one}, for each $x \in P_n^{\mathfrak{A}}$ there is an isomorphism $h_x$ between $\mathfrak{M}_x$ and $\mathfrak{M}_x$.
Then $f = h_n \cup \bigcup_{x\in P^{\mathfrak{A}}_n}h_x$ is a wanted embedding.
The case where $n>1$ is odd is proved in the same way.
\end{proof}

\begin{lemma}\label{structure}
Let $n>1$ be a natural number. 
Let $\mathfrak{A}' \in S_n$, $\mathfrak{B}'\in T_n$ if $n$ is odd, and
$\mathfrak{A}'\in T_n$, $\mathfrak{B}' \in S_n$ if $n$ is even.
Set $\mathfrak{A} = \mathfrak{A}'\upharpoonright \tau_n$ and $\mathfrak{B} = \mathfrak{B}'\upharpoonright \tau_n$.
There is an embedding $f$ of $\mathfrak{B}$ into $\mathfrak{A}$ such that for any $\overline{a} \in B^{<\omega}$ there are partitions $(A_1,A_2)$ of $A$ and $(B_1,B_2)$ of $B$ such that
\begin{enumerate}
\item $\overline{a} \in B_1^{<\omega}$,
\item $f | B_1$ is an isomorphism between $\mathfrak{B} | B_1$ and $\mathfrak{A} | A_1$,
\item $\mathfrak{A}'|A_2 \in T_{n-1}$ and $\mathfrak{B}'|B_2 \in S_{n-1}$ if $n$ is even,
\item $\mathfrak{A}'|A_2 \in S_{n-1}$ and $\mathfrak{B}'|B_2 \in T_{n-1}$ if $n$ is odd,
\item for every embedding $g :\mathfrak{A}|A_2\rightarrow \mathfrak{B}|B_2$ and $h : \mathfrak{B} |B_2\rightarrow \mathfrak{A}|A_1$ it is the case that $(f^{-1}|A_1) \cup g$ and $(f|B_1) \cup h$ are embeddings $\mathfrak{A}\rightarrow\mathfrak{B}$ and $\mathfrak{B}\rightarrow\mathfrak{A}$, respectively,
\item if $n$ is even then $(\mathfrak{A},f\overline{a}) \equiv^{n-1}_{\emb} (\mathfrak{B},\overline{a})$ if and only if $(\mathfrak{M}\upharpoonright\tau_{n-1}) \equiv^{n-1}_{\emb} (\mathfrak{N}\upharpoonright \tau_{n-1})$ for all $\mathfrak{M} \in T_{n-1}$ and $\mathfrak{N} \in S_{n-1}$,
\item if $n$ is odd then $(\mathfrak{A},f\overline{a}) \equiv^{n-1}_{\emb} (\mathfrak{B},\overline{a})$ if and only if $(\mathfrak{M}\upharpoonright \tau_{n-1}) \equiv^{n-1}_{\emb} (\mathfrak{N}\upharpoonright \tau_{n-1})$ for all $\mathfrak{M} \in S_{n-1}$ and $\mathfrak{N} \in T_{n-1}$.
\end{enumerate}
\end{lemma}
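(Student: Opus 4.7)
The plan is to build $f \colon \mathfrak{B} \to \mathfrak{A}$ as a gluing of an order-isomorphism on $P_n$, isomorphisms on the ``regular'' fibers, and a single embedding between the two exceptional fibers, invoking Lemma~\ref{natural_number} at the preceding level $n-1$. I describe the even case ($\mathfrak{A}' \in T_n$, $\mathfrak{B}' \in S_n$); the odd case is symmetric. Let $b^*$ be the unique element of $C_n^{\mathfrak{B}'}$, fix any order-isomorphism $\varphi \colon P_n^{\mathfrak{B}} \to P_n^{\mathfrak{A}}$, and set $a^* := \varphi(b^*)$. For every $b \in P_n^{\mathfrak{B}} \setminus \{b^*\}$ both $\mathfrak{M}_b^{\mathfrak{B}'}$ and $\mathfrak{M}_{\varphi(b)}^{\mathfrak{A}'}$ lie in $T_{n-1}$, so Lemma~\ref{exactly_one} supplies an isomorphism $f_b$ between them. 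For the exceptional pair, $\mathfrak{M}_{b^*}^{\mathfrak{B}'} \in S_{n-1}$ and $\mathfrak{M}_{a^*}^{\mathfrak{A}'} \in T_{n-1}$, and since $n-1$ is odd, Lemma~\ref{natural_number} at level $n-1$ furnishes an embedding $f_{b^*} \colon \mathfrak{M}_{b^*}^{\mathfrak{B}'} \to \mathfrak{M}_{a^*}^{\mathfrak{A}'}$ together with its partition property. I set $f := \varphi \cup \bigcup_{b \in P_n^{\mathfrak{B}}} f_b$.

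Given $\overline{a} \in B^{<\omega}$, let $\overline{a}^*$ be the subtuple of $\overline{a}$ lying in $M_{b^*}$. Applying the partition property of Lemma~\ref{natural_number} to $f_{b^*}$ with parameter $\overline{a}^*$ yields partitions $(A_1', A_2')$ of $M_{a^*}$ and $(B_1', B_2')$ of $M_{b^*}$ with $\overline{a}^* \in (B_1')^{<\omega}$, with $f_{b^*}|B_1'$ an isomorphism $\mathfrak{B}'|B_1' \to \mathfrak{A}'|A_1'$, and with $\mathfrak{A}'|A_2' \in T_{n-1}$, $\mathfrak{B}'|B_2' \in S_{n-1}$. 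Set $A_2 := A_2'$, $A_1 := A \setminus A_2$, $B_2 := B_2'$, $B_1 := B \setminus B_2$. Items~(1), (3) and~(4) are then immediate, and~(2) follows because $f|B_1$ is the disjoint union of $\varphi$, the $f_b$ for $b \ne b^*$, and $f_{b^*}|B_1'$, each of which is an isomorphism onto the corresponding piece of $\mathfrak{A}|A_1$.

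For the extension property~(5), the only $\tau_n$-relations that can cross the partition are $E_n$-edges from $a^* \in A_1 \cap P_n$ into $M_{a^*} \cap A_2 = A_2$, and symmetrically from $b^*$ into $B_2$. Under $(f^{-1}|A_1) \cup g$ for any embedding $g \colon \mathfrak{A}|A_2 \to \mathfrak{B}|B_2$, the element $a^*$ is sent to $b^* \in B_1$ and $A_2$ into $B_2 \subseteq M_{b^*}$, so these $E_n$-edges are preserved; all other symbols ($\leq$, the lower $E_i$, and the $P_i$) are confined to a single side of the partition and are preserved by the respective isomorphism or embedding. The dual check for $(f|B_1) \cup h$ is analogous.

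The main obstacle is properties~(6) and~(7). The $(\Leftarrow)$ direction is a game-lifting: combining any $(n-1)$-round Duplicator strategy on the inner pair $(\mathfrak{M}\upharpoonright \tau_{n-1}, \mathfrak{N}\upharpoonright \tau_{n-1})$ with the fixed isomorphism $f^{-1}|A_1$ on the $A_1 \leftrightarrow B_1$ side gives, via~(5), a Duplicator strategy on $(\mathfrak{A}, f\overline{a})$ versus $(\mathfrak{B}, \overline{a})$ --- the point being that~(5) lets any inner move extend to a full embedding regardless of Spoiler's prior plays, while on the $A_1 \leftrightarrow B_1$ side the round is trivially survived via the isomorphism. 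For the $(\Rightarrow)$ direction, Spoiler uses a probing first move that exposes the correspondence between $b^*$ and its Duplicator-chosen image in $P_n^{\mathfrak{A}}$: once that pair is fixed as a constant, every subsequent Duplicator embedding must map $M_{b^*}$ into the matching fiber in $\mathfrak{A}$, reducing the residual game to the inner game on the $\tau_{n-1}$-reducts. Making the rank bookkeeping exact so that the $n-1$ rounds on both sides of the equivalence line up --- in particular verifying that the probing step does not cost a round that we cannot afford --- is the delicate technical point, and is where the bulk of the argument lives.
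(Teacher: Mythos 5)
Your construction of $f$ --- an order-isomorphism on the $P_n$-layer glued with isomorphisms on the non-distinguished fibers (via Lemma~\ref{exactly_one}) and the Lemma~\ref{natural_number} embedding on the distinguished fiber, with the partitions of $A$ and $B$ obtained by pushing the Lemma~\ref{natural_number} partition of the exceptional fiber out to the whole structure --- is exactly the paper's proof. The paper is in fact terser than you are on items (6) and (7), simply asserting that the partitions ``satisfy all the seven requirements,'' so your game-lifting sketch for those items (and your honest flagging of the round-counting in the forward direction as the remaining delicate point) goes beyond what is written there rather than falling short of it.
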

\begin{proof}
There are two possible cases: $n$ is even and $n$ is odd.
Suppose first that $n$ is even.
Then $\mathfrak{A}' \in T_n$ and $\mathfrak{B}' \in S_n$.
Let $h$ be an isomorphism between $\eta^{\mathfrak{B}}_n$ and $\eta^{\mathfrak{A}}_n$.
For each $x\in P_n^{\mathfrak{B}} \setminus C_n^{\mathfrak{B}}$, let $h_x$ be an isomorphism between $\mathfrak{M}_x$ and $\mathfrak{M}_{h(x)}$. For $a \in B$ such that $C^{\mathfrak{B}}_n = \{a\}$, let $h_a$ be an embedding of $\mathfrak{M}_a$ into $\mathfrak{M}_{h(a)}$ like in Lemma \ref{natural_number}.
Let $f = h \cup \bigcup_{x\in P^{\mathfrak{B}}_n}h_x$. Then $f$ is an embedding of $\mathfrak{B}$ into $\mathfrak{A}$.
Note that $\mathfrak{M}_a \in S_{n-1}$ and $\mathfrak{M}_{h(a)} \in T_{n-1}$, and for all embeddings $g : \mathfrak{M}_a \rightarrow \mathfrak{M}_{h(a)}$ and 
$g' : \mathfrak{M}_{h(a)} \rightarrow \mathfrak{M}_a$ the functions $f' \cup g$ and $f'^{-1} \cup g'$ are also embeddings where $f' = h\cup \bigcup_{x\in P_n^{\mathfrak{B}}\setminus \{a\}}$.
Let $\overline{a} \in B^{<\omega}$, $\overline{a} = (a_1,\dots,a_k)$, and assume without loss of generality that $a_1,\dots,a_m \in B\setminus M_a$ and $a_{m+1},\dots,a_k \in M_a$ for some $m\leq k$.
For all $x \in P^{\mathfrak{A}}_n \cup P^{\mathfrak{B}}_n$, let $\mathfrak{M}'_x = \mathfrak{M}_x\upharpoonright \tau_{n-1}$.
By Lemma \ref{natural_number}, there are partitions $(M_1,M_2)$ of $M_a$ and $(N_1,N_2)$ of $M_{h(a)}$ such that
\begin{enumerate}
\item $a_{m+1},\dots,a_k \in M_1$,
\item $h_a|M_1$ is an isomorphism between $\mathfrak{M}'_a | M_1$ and $\mathfrak{M}'_{h(a)} | N_1$,
\item $\mathfrak{M}_a | M_2 \in S_{n-1}$ and $\mathfrak{M}_{h(a)} | N_2 \in T_{n-1}$,
\item for every embedding $g : \mathfrak{M}'_a|M_2 \rightarrow \mathfrak{M}'_{h(a)}|N_2$ and $g' : \mathfrak{M}'_{h(a)}|N_2 \rightarrow \mathfrak{M}'_a | M_2$ we have that 
$(h_a | M_1) \cup g$ and $(h_a^{-1} | N_1)\cup g'$ are also embeddings $\mathfrak{M}'_a \rightarrow \mathfrak{M}'_{h(a)}$ and $\mathfrak{M}'_{h(a)} \rightarrow \mathfrak{M}'_a$, respectively.
\end{enumerate}
Now let $A_1 = A \setminus N_2$, $A_2 = N_2$, $B_1 = B \setminus M_2$ and $B_2 = M_2$.
Then $(A_1,A_2)$ and $(B_1,B_2)$ are partititions of $A$ and $B$, respectively, satisfying all the seven requirements.
The case of $n$ being odd is proved in the same way.
\end{proof}

\begin{lemma}\label{we_have}
Let $\mathfrak{A}' \in S_1$ and $\mathfrak{B}' \in T_1$. Then $\mathfrak{A} \equiv_{\emb}^1 \mathfrak{B}$ where $\mathfrak{A} = \mathfrak{A}'\upharpoonright \tau_1$ and $\mathfrak{B} = \mathfrak{B}'\upharpoonright \tau_1$.
\end{lemma}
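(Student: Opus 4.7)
The plan is to invoke Theorem~\ref{game} in order to convert the model-theoretic claim $\mathfrak{A} \equiv_{\emb}^1 \mathfrak{B}$ into the purely combinatorial task of verifying $\mathfrak{A} \simeq_{\emb}^1 \mathfrak{B}$, i.e.\ that $\mathfrak{A}$ and $\mathfrak{B}$ are bi-embeddable. Recall that both $\mathfrak{A}$ and $\mathfrak{B}$ consist of two disjoint copies of $\eta$ on carriers $P_1$ and $P_0$; in $\mathfrak{A}$ the relation $E_1^{\mathfrak{A}}$ is a full order-isomorphism $h \colon P_1^{\mathfrak{A}} \to P_0^{\mathfrak{A}}$, whereas in $\mathfrak{B}$ one has $E_1^{\mathfrak{B}} = h' \setminus \{(a,h'(a))\}$ for some $a \in P_1^{\mathfrak{B}}$ and some order-isomorphism $h' \colon P_1^{\mathfrak{B}} \to P_0^{\mathfrak{B}}$.

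For the easy direction $\mathfrak{A} \leq \mathfrak{B}$, I would fix, using Lemma~\ref{order_isom}, an order-embedding $\phi$ of $P_1^{\mathfrak{A}}$ into the initial segment $\{x \in P_1^{\mathfrak{B}} : x < a\}$ (which is a copy of $\eta$ not containing $a$), and then define $f(x) = \phi(x)$ on $P_1^{\mathfrak{A}}$ and $f(h(x)) = h'(\phi(x))$ on $P_0^{\mathfrak{A}}$. Because the image of $\phi$ misses $a$, every $E_1^{\mathfrak{A}}$-pair is sent to an $E_1^{\mathfrak{B}}$-pair, and preservation of $\leq$, of the sorts $P_0, P_1$, and of the non-$E_1$ pairs is immediate from the construction.

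The reverse direction $\mathfrak{B} \leq \mathfrak{A}$ is where I expect the main obstacle, since the missing pair $(a,h'(a))$ of $\mathfrak{B}$ must be sent to a pair \emph{not} in $E_1^{\mathfrak{A}}$. My plan is to first choose an order-embedding $\psi \colon P_1^{\mathfrak{B}} \to P_1^{\mathfrak{A}}$ whose image leaves a nondegenerate order-gap around $\psi(a)$; concretely, one can take $\psi$ to be an order-isomorphism of $P_1^{\mathfrak{B}}$ onto a subset of $P_1^{\mathfrak{A}}$ with a small open rational interval of $P_1^{\mathfrak{A}}$ skipped at the location of $\psi(a)$. On $P_0^{\mathfrak{B}} \setminus \{h'(a)\}$ the map $g_0(y) = h(\psi(h'^{-1}(y)))$ is then forced by $E_1$-preservation, and the only remaining freedom is the value $g_0(h'(a))$, which must lie in the order-interval $(\sup_{x<a} h(\psi(x)),\ \inf_{x>a} h(\psi(x)))$ of $P_0^{\mathfrak{A}}$ but be distinct from $h(\psi(a))$. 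Density of $\eta$, together with the gap that was built into $\psi$, will guarantee such a choice; once it is made, routine verification shows that $g = \psi \cup g_0$ is an embedding $\mathfrak{B} \to \mathfrak{A}$.

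Combining the two embeddings yields $\mathfrak{A} \simeq_{\emb}^1 \mathfrak{B}$, and Theorem~\ref{game} then delivers the desired equivalence $\mathfrak{A} \equiv_{\emb}^1 \mathfrak{B}$.
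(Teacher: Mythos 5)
Your proposal is correct and follows essentially the same route as the paper: both directions reduce to exhibiting explicit embeddings (the paper obtains $\mathfrak{A}\leq\mathfrak{B}$ from Lemma~\ref{natural_number} and constructs $\mathfrak{B}\leq\mathfrak{A}$ directly), and your idea of choosing $\psi$ so that its image leaves an order-gap near $\psi(a)$, into which the orphaned element $h'(a)$ can be dropped, is exactly what the paper's shift map $c_q\mapsto c_{q\pm 1}$, $d_a\mapsto d_{a+\frac{1}{2}}$ implements concretely.
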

\begin{proof}
We already proved that there is an embedding of $\mathfrak{A}$ into $\mathfrak{B}$ in Lemma \ref{natural_number}, so we need to find an embedding of $\mathfrak{B}$ into $\mathfrak{A}$.
We expand the vocabulary $\tau_1$ by setting $\tau^*_1 = \tau_1 \cup \{c_q,d_q : q \text{ is a rational number } \}$. Let $h : \eta \rightarrow \eta^{\mathfrak{A}}_1$ and $g : \eta \rightarrow \eta^{\mathfrak{B}}_1$ be isomorphisms.
Let $a \in \eta$ be such that $C^{\mathfrak{B}}_1 = \{g(a)\}$.
Let $\mathfrak{A}^*$, $\mathfrak{B}^*$ be $\tau^*$-structures such that $\mathfrak{A}^* \upharpoonright \tau_1 = \mathfrak{A}$ and $\mathfrak{B}^*\upharpoonright \tau_1 = \mathfrak{B}$, $c^{\mathfrak{A}^*}_q = h(q)$, $c^{\mathfrak{B}^*}_q = g(q)$ for all $q\in \eta$, $d_q^{\mathfrak{A}^*}$, $d_q^{\mathfrak{B}^*}$ are such that $\mathfrak{C} \vDash E_1(c_q,d_q)$ for $\mathfrak{C} \in \{\mathfrak{A}^*,\mathfrak{B}^*\}$ and $q \in \eta \setminus \{a \}$, and $d_a^{\mathfrak{B}^*} = g(a)$.
We define a function $f : B \rightarrow A$ by setting
\begin{align*}
	&\left. \begin{array}{rl}
		f(c_q^{\mathfrak{B}^*}) = c_{q+1}^{\mathfrak{A}^*}, \\
		f(d_q^{\mathfrak{B}^*}) = d_{q+1}^{\mathfrak{A}^*} \\
		\end{array} \right\} \text{ if } q > a, \\
	&\left. \begin{array}{rl}
		f(c_q^{\mathfrak{B}^*}) = c_{q-1}^{\mathfrak{A}^*}, \\
		f(d_q^{\mathfrak{B}^*}) = d_{q-1}^{\mathfrak{A}^*} \\
		\end{array} \right\} \text{ if } q < a,\\
	&\ \ f(c^{\mathfrak{B}^*}_a) = c^{\mathfrak{A}^*}_a, \\
	&\ \ f(d^{\mathfrak{B}^*}_a) = d^{\mathfrak{A}^*}_{a+\frac{1}{2}}.
\end{align*}
It is straightforward to verify that $f$ is indeed an embedding of $\mathfrak{B}$ into $\mathfrak{A}$.
\end{proof}

\begin{proposition}\label{let}
Let $n \geq 1$ be a natural number and $\mathfrak{A}'\in T_n$, $\mathfrak{B}'\in S_n$.
Then $\mathfrak{A} \equiv_{\emb}^n \mathfrak{B}$, where $\mathfrak{A} = \mathfrak{A}' \upharpoonright \tau_n$ and $\mathfrak{B} = \mathfrak{B}' \upharpoonright \tau_n$.
\end{proposition}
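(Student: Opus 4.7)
The plan is to prove Proposition \ref{let} by induction on $n$, applying Theorem \ref{game} to reduce the desired conclusion $\mathfrak{A} \equiv^n_{\emb} \mathfrak{B}$ to the statement that Duplicator has a winning strategy in the $n$-round embedding game on $\mathfrak{A}$ and $\mathfrak{B}$. The base case $n = 1$ follows from Lemma \ref{we_have} together with the symmetry of $\equiv^1_{\emb}$ (which amounts to bi-embeddability): swapping the roles of $\mathfrak{A}'$ and $\mathfrak{B}'$ puts the hypothesis into the form required by Lemma \ref{we_have}.

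For the inductive step, fix $n > 1$ and assume the proposition at level $n-1$. Depending on the parity of $n$, Lemmas \ref{natural_number} and \ref{structure} either apply directly to the pair $(\mathfrak{A}', \mathfrak{B}')$ (when $n$ is even) or apply after interchanging the roles of $\mathfrak{A}'$ and $\mathfrak{B}'$ (when $n$ is odd). In either case, the two lemmas jointly furnish embeddings $f \colon \mathfrak{A} \to \mathfrak{B}$ and $g \colon \mathfrak{B} \to \mathfrak{A}$ with partition data controlled by arbitrary tuples on the $\mathfrak{A}$-side and $\mathfrak{B}$-side, respectively. One of these is of \emph{same-depth} type (property 5 of Lemma \ref{natural_number} links $(\mathfrak{A},\overline{a}) \equiv^m_{\emb} (\mathfrak{B},f\overline{a})$ to $\mathfrak{A} \equiv^m_{\emb} \mathfrak{B}$), while the other is of \emph{descended-depth} type (properties 6 and 7 of Lemma \ref{structure} link the corresponding equivalence at level $n-1$ to the equivalence between the appropriate level-$(n-1)$ structures in $S_{n-1}$ and $T_{n-1}$). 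Duplicator plays this pair $(f, g)$ at the opening move of every round.

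To verify this strategy is winning I would run an inner induction on $m$ with $1 \leq m \leq n$ establishing $\mathfrak{A} \equiv^m_{\emb} \mathfrak{B}$, of which $m = n$ is the required statement. The base $m = 1$ is immediate from the existence of $f$ and $g$. For $m > 1$, after Duplicator plays $(f, g)$ in round one, Spoiler picks a tuple $\overline{a}$. If $\overline{a}$ lies on the same-depth side, then property 5 of Lemma \ref{natural_number} reduces winnability of the remaining $m-1$ rounds (by Theorem \ref{game}) to $\mathfrak{A} \equiv^{m-1}_{\emb} \mathfrak{B}$, which is precisely the inner induction hypothesis. If $\overline{a}$ lies on the descended-depth side, then property 6 or 7 of Lemma \ref{structure} reduces the relevant level-$(n-1)$ equivalence to the equivalence of suitable level-$(n-1)$ structures, which is exactly the outer induction hypothesis; since $m - 1 \leq n - 1$, the required level-$(m-1)$ equivalence follows.

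The main obstacle is the bookkeeping of the parity-dependent role swap between Lemmas \ref{natural_number} and \ref{structure}, together with keeping the outer induction (on the structural depth $n$) cleanly separated from the inner induction (on the round count $m$ at fixed depth). The conceptual point is that property 5 of Lemma \ref{natural_number} leaves the depth fixed while decrementing the round index, fueling the inner induction, whereas properties 6 and 7 of Lemma \ref{structure} descend the depth to $n-1$, where the outer induction hypothesis immediately terminates the game.
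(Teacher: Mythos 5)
Your proposal is correct and follows essentially the same route as the paper: induction on $n$, with Duplicator playing the embeddings supplied by Lemmas \ref{natural_number} and \ref{structure}, using property 5 of the former on the same-depth side and properties 6--7 of the latter to descend to depth $n-1$ where the outer induction hypothesis applies. Your explicit inner induction on the round count $m$ is a welcome clarification of the step the paper compresses into ``by fact 5 and the induction hypothesis,'' which as written would otherwise appear circular.
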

\begin{proof}
We use induction on $n$. The base case is proved in Lemma \ref{we_have}.
Assume that $n>1$ is even and the claim is true for $n-1$.
Let Duplicator select embeddings $f : \mathfrak{A} \rightarrow \mathfrak{B}$  as in Lemma \ref{natural_number} and $g : \mathfrak{B} \rightarrow \mathfrak{A}$  as in Lemma \ref{structure}.
Suppose first that Spoiler chose embedding $f$ and elements $a_1,\dots,a_k \in A$.
Let $(A_1,A_2)$ and $(B_1,B_2)$ be partititions of $A$ and $B$, respectively, like in the statement of Lemma \ref{natural_number}.
Then $(\mathfrak{A},\overline{a}) \equiv^{n}_{\emb} (\mathfrak{B},f\overline{a})$
by the fact 5. of Lemma \ref{natural_number} and the induction hypothesis.
Now suppose that Spoiler selected the embedding $g$ and elements $b_1,\dots,b_l \in B$.
Let $(A^*_1,A^*_2)$ and $(B^*_1,B^*_2)$ be partititions of $A$ and $B$, respectively, like in Lemma \ref{structure}. Then by its 8. fact we have $(\mathfrak{A},g\overline{b}) \equiv^{n-1}_{\emb} (\mathfrak{B},\overline{b})$. 
Thus, since also $(\mathfrak{A},\overline{a}) \equiv^{n}_{\emb} (\mathfrak{B},f\overline{a})$, we have $\mathfrak{A} \equiv^n_{\emb} \mathfrak{B}$ if $n$ is even.
The case where $n$ is odd is proved in the same way.
\end{proof}

\begin{proof}[Proof of Theorem \ref{for_each}]
Combine Propositons \ref{suppose} and \ref{let}.
The existence of $\varphi_n$ having the required form with alternating existence and universal quantifiers follows from the proof of Proposition \ref{suppose}.
\end{proof}

\begin{remark}
In the proof of Theorem \ref{for_each} we used a certain tree construction 
where new structures were built from the given structures by connecting them in a specific way,
which is normal practice in results of this kind.
A similar result concerning logics with quantifiers of bounded arity was proved in \cite{Keisler: 2011} by Keisler and Lotfallah. They used a somehow similar tree construction and the bijective game (see Remark \ref{bijective}) in their proof. 
Worth mentioning are also tree-like sums of Makowsky and Shelah in \cite{Makowski: 1979} based on the ideas of Friedman \cite{Friedman: 1973} and Gregory \cite{Gregory: 1974}.
They are used to prove some results concerning, among others, Beth definability in abstract logics.
\end{remark}

\end{document}